\documentclass[10pt]{amsart}
\usepackage{amsmath}
\usepackage{amssymb,amscd}
\usepackage{graphicx}

\usepackage{tikz}
\usepackage{tikz-cd}
\usepackage{tikz-3dplot}
\usetikzlibrary{shapes.geometric, calc}

\usepackage{mathdots}
\usepackage{mathrsfs}       
\usepackage{graphicx}
\usepackage{colonequals} 
\usepackage[mathscr]{euscript}

\usepackage{enumitem}

\usepackage{hyperref} 
\hypersetup{
    colorlinks=true,
    linkcolor=blue,
    urlcolor=red,
    linktoc=all
}

\usepackage{appendix}

\theoremstyle{plain}
\newtheorem{theorem}{Theorem}[section]
\newtheorem{lemma}[theorem]{Lemma}
\newtheorem{proposition}[theorem]{Proposition}
\newtheorem{corollary}[theorem]{Corollary}
\newtheorem{question}[theorem]{Question}

\newtheorem{problem}[theorem]{Problem}

\theoremstyle{definition}
\newtheorem{definition}[theorem]{Definition}
\newtheorem{example}[theorem]{Example}
\newtheorem{remark}[theorem]{Remark}

\numberwithin{equation}{section}

\def \mc{\mathcal}

\def \CC{{\mathbb{C}}}
\def \CP{\mathbb{C}{\rm P}}

\def \QQ{{\mathbb{Q}}}
\def \RR{{\mathbb{R}}}
\def \ZZ{{\mathbb{Z}}}
\def \SS{{\mathfrak{S}}}

\DeclareMathOperator{\Hess}{Hess}

\def\M{\mathfrak{F}}

\def\FA{F(I)}
\def\FB{F(J)}
\def\FAbullet{F(I_{\bullet})}
\def\CAbullet{C(I_{\bullet})}
\def\FlA{F_K(\indI)}
\def\FlB{F_K(\indJ)}

\def\Hk{H(k)}
\def\Hn{H(n)}
\def\H{H}

\def\x{x}

\def\v{\mathsf{v}}

\def\g{\mathfrak{g}}
\def\b{\mathfrak{b}}
\def\h{\mathcal{H}}

\def\d{d}

\def\nbar{\overline{[n]}}

\usetikzlibrary{arrows,snakes,backgrounds,calc}

\def\indI{I}
\def\indJ{J}

\def\An{A_{n-1}}

\tikzset{rotation45/.style={rotate=45}}
\tikzset{rotation315/.style={rotate=-45}}
\tikzset{rotation35/.style={rotate=35}}
\tikzset{rotation325/.style={rotate=325}}
\tikzset{rotation20/.style={rotate=20}}
\tikzset{rotation340/.style={rotate=340}}
\setcounter{tocdepth}{1}  

\makeatletter
\@namedef{subjclassname@2020}{\textup{2020} Mathematics Subject Classification}
\makeatother

\begin{document}
\title[Toric orbifolds associated with partitioned weight polytopes]{Toric orbifolds associated with partitioned weight polytopes in classical types}

\author[T. Horiguchi]{Tatsuya Horiguchi}
\address{Osaka City University Advanced Mathematical Institute, 
Osaka 558-8585, Japan.
}
\email{tatsuya.horiguchi0103@gmail.com}

\author[M. Masuda]{Mikiya Masuda}
\address{Osaka City University Advanced Mathematical Institute, 
Osaka 558-8585, Japan.}
\email{mikiyamsd@gmail.com}

\author[J. Shareshian]{John Shareshian}
\address{Washington University, St Louis, MO 63130, USA}
\email{jshareshian@wustl.edu}

\author[J. Song]{Jongbaek Song}
\address{School of Mathematics, KIAS, Seoul 02455, Republic of Korea}
\email{jongbaek@kias.re.kr}


\subjclass[2020]{14M25, 17B22, 52B05}
\keywords{toric varieties, root systems, cohomology, parmutohedra, weight polytopes, Weyl groups, parabolic subgroups, Hessenberg varieties}

\abstract 
Given a root system $\Phi$ of type $A_n$, $B_n$, $C_n$, or $D_n$ in Euclidean space $E$, let $W$ be the associated Weyl group.  For a point $p \in E$ not orthogonal to any of the roots in $\Phi$, we consider the $W$-permutohedron $P_W$, which is the convex hull of the $W$-orbit of $p$.  The representation of $W$ on the rational cohomology ring $H^\ast(X_\Phi)$ of the toric variety $X_\Phi$ associated to (the normal fan to) $P_W$ has been studied by various authors.  
Let $\{s_1,\ldots,s_n\}$ be a complete set of simple reflections in $W$.  For $K \subseteq [n]$, let $W_K$ be the standard parabolic subgroup of $W$ generated by $\{s_k:k \in K\}$.  We show that the fixed subring $H^\ast(X_\Phi)^{W_K}$ is isomorphic to the cohomology ring of 
the toric variety $X_\Phi(K)$ associated to a polytope obtained by intersecting $P_W$ with half-spaces bounded by reflecting hyperplanes for the given generators of $W_K$.
By a result of Balibanu--Crooks, the cohomology rings $H^\ast(X_\Phi(K))$ are isomorphic with cohomology rings of certain regular Hessenberg varieties.
\endabstract

\maketitle

\tableofcontents

\section{Introduction}\label{intro}

Let $\Phi$ be a crystallographic root system in Euclidean space $E$.  The linear hyperplanes orthogonal to the roots in $\Phi$ determine a complete fan $\Delta_\Phi$ in $E$, which determines in turn a toric variety $X_\Phi$.  The action of the associated finite reflection group $W$ on $E$ preserves $\Delta_\Phi$ and so gives rise to an action of $W$ on $X_\Phi$, which induces a (graded) linear representation of $W$ on the rational cohomology ring $H^\ast(X_\Phi;\QQ)$.  This representation has been studied when $\Phi$ is of type $A$ by Procesi in \cite{Pro}, by Stanley in \cite{Stan} and by Stembridge in \cite{Stem1}; and when $\Phi$ is an arbitrary irreducible system by Stembridge in \cite{Stem2} and by Dolgachev and Lunts in \cite{D-L}.  The ring structure of $H^\ast(X_\Phi;\QQ)$ is determined by Danilov's Theorem (see \cite{Dan, Jur}), and was studied further by Klyachko in \cite{Kly} and by Abe in \cite{Abe}.  In \cite{Blu}, Blume provided a result that involves both the $W$-representation and the ring structure.  Namely, he shows that if $\Phi$ is of type $A$, $B$, or $C$, then the quotient space $Y\colonequals X_\Phi/W$ is a toric orbifold.  So, the fixed point subring
\begin{equation} \label{eq_isom_induced_from_Blume}
H^\ast(X_\Phi;\QQ)^W \cong H^\ast(Y;\QQ)
\end{equation}
is determined by the combinatorial structure of some complete fan.  We observe that, in general,  the usual torus action on $X_\Phi$ does not descend to an action on the quotient $X_\Phi/W$, since this action does not commute with that of $W$ on $X_\Phi$. 

Here we generalize (\ref{eq_isom_induced_from_Blume}). We consider the case where $\Phi$ belongs to any of the infinite families of irreducible root systems of types $A,B,C$ or $D$.  Let $\Sigma=\{\alpha_1,\ldots,\alpha_n\}$ be a fixed set of simple roots in $\Phi$.  For each $\alpha \in \Phi$, let $s_\alpha$ be the reflection through the hyperplane orthogonal to $\alpha$.  We write $s_i$ for $s_{\alpha_i}$.  So, $W=\langle s_i\mid i \in [n] \rangle$.  Given $K \subseteq [n]$, we define as usual the parabolic subgroup
\[
W_K\colonequals \langle s_i\mid i \in K \rangle.
\]
We study here the fixed point subring $H^\ast(X_\Phi;\QQ)^{W_K}$.  We will see that the ring is isomorphic with the cohomology ring of the toric variety associated to a fan determined by $\Phi$ and $K$, and describe this fan explicitly.

More precisely, the fan $\Delta_\Phi$ is the normal fan to a simple lattice polytope, the {\it $W$-permutohedron} $P_W$.  One obtains $P_W$ by choosing a point $p$ in the interior of one of the maximal cones of $\Delta_\Phi$ and taking the convex hull of the $W$-orbit $W(p)$.  (We will make particular choices for $p$ that suit our purposes.)  For each $k \in [n]$, we define $H(k)$ to be the hyperplane orthogonal to $\alpha_k$,
and define the closed half-space 
$$H(k)^\leq\colonequals \{x \in E \mid \alpha_k^\v(x) \leq 0\}.$$
(Here $\alpha_k^\v$ is the coroot associated to $\alpha_k$.)

Now, given $K \subseteq [n]$, we define the polytope
$$
P_W(K)\colonequals P_W \cap \bigcap_{k \in K}H(k)^\leq.
$$
We will prove that $P_W(K)$ is a simple rational polytope.  We define $X_\Phi(K)$ to be the projective toric variety associated to (the normal fan of) $P_W(K)$.  Our main theorem is as follows.

\begin{theorem} \label{main}
If $\Phi$ is an irreducible root system of type $A_n$, $B_n$, $C_n$, or $D_n$ and $K \subseteq [n]$,  then the rings $H^\ast(X_\Phi;\QQ)^{W_K}$ and $H^\ast(X_\Phi(K);\QQ)$ are isomorphic.
\end{theorem}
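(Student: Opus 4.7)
The plan is to compare the Danilov--Jurkiewicz presentations of the two cohomology rings, based on the key observation that $P_W(K)$ is a fundamental domain for the action of $W_K$ on $P_W$: the $W_K$-translates of $P_W(K)$ tile $P_W$, meeting only along the reflecting hyperplanes $H(k)$ for $k \in K$. This sets up a bijection between cones of the normal fan $\Delta_\Phi(K)$ of $P_W(K)$ and $W_K$-orbits of cones of $\Delta_\Phi$ (supplemented by new cones coming from the cutting hyperplanes), which I aim to promote to a ring isomorphism.

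Step one is to verify that $P_W(K)$ is a simple rational polytope, so that $X_\Phi(K)$ is a well-defined projective toric orbifold. Its vertices split into original vertices $wp$ of $P_W$ satisfying $\alpha_k^\v(wp) \leq 0$ for every $k \in K$, and new vertices lying on the boundary hyperplanes $H(k)$; simplicity at the new vertices needs a type-by-type verification, using the freedom to choose $p$ generically. Step two is to describe $\Delta_\Phi(K)$ explicitly: its rays correspond to $W_K$-orbits of rays of $\Delta_\Phi$ (each orbit represented by a ray whose associated facet of $P_W$ meets the fundamental domain) together with the coroots $\alpha_k^\v$ for $k \in K$. Step three is the identification of cohomology rings. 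In the Danilov--Jurkiewicz presentation of $H^\ast(X_\Phi;\QQ)$, the invariant subring $H^\ast(X_\Phi;\QQ)^{W_K}$ is generated in degree $2$ by $W_K$-orbit sums of ray variables, and I would identify these (together with new variables for the boundary facets) with the Danilov--Jurkiewicz generators of $H^\ast(X_\Phi(K);\QQ)$, checking that the Stanley--Reisner and linear relations correspond on both sides. An alternative route, following Blume, would be to endow the quotient $X_\Phi/W_K$ with a toric orbifold structure realizing the fan $\Delta_\Phi(K)$ and then invoke the rational transfer isomorphism $H^\ast(X_\Phi;\QQ)^{W_K} \cong H^\ast(X_\Phi/W_K;\QQ)$.

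The main obstacle lies in verifying the ring relations under the proposed identification. When a product of orbit-sum variables is expanded, the cross terms between distinct $W_K$-translates must either vanish by Stanley--Reisner relations of $\Delta_\Phi$ or reassemble into the Stanley--Reisner relations of $\Delta_\Phi(K)$; the linear relations from characters of the torus must also be compatible after projection to invariants. The hardest case is type $D_n$, where Blume's orbifold-quotient argument for $X_\Phi/W$ was never established, and where new boundary vertices of $P_W(K)$ may lie on several hyperplanes $H(k)$ with nontrivial local isotropy, making the direct orbifold construction delicate. A careful analysis of the stabilizers of faces of $P_W(K)$, perhaps guided by the Balibanu--Crooks identification with regular Hessenberg varieties, should be enough to push the argument through uniformly.
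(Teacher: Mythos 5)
Your skeleton matches the paper's up to a point: establish that $P_W(K)$ is a simple (indeed flag) polytope, describe its facets as $W_K$-orbit representatives of facets of $P_W$ together with the cutting hyperplanes, show the invariant ring is generated in degree $2$ by orbit sums, and compare Danilov--Jurkiewicz presentations. But there is a genuine gap exactly where you locate your ``main obstacle,'' and your proposal does not contain the idea that closes it. Matching generators and verifying that the relations of $H^\ast(X_\Phi(K))$ hold among the orbit sums only produces a surjection $H^\ast(X_\Phi(K))\twoheadrightarrow H^\ast(X_\Phi)^{W_K}$; to get an isomorphism you must rule out \emph{extra} relations in the invariant ring not visible from the fan of $P_W(K)$, and ``checking that the relations correspond on both sides'' is not an argument for this. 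The paper never attempts that direct verification. Instead it invokes a Poincar\'e duality algebra lemma (Lemma \ref{lem_PDA-implies_isom}): a surjective graded homomorphism out of a Poincar\'e duality algebra that is an isomorphism in top degree is an isomorphism, and both top degrees are one-dimensional since $W_K$ acts preserving orientation. This single observation replaces the entire ``reassemble the Stanley--Reisner relations'' analysis you flag as hard, and without it (or an equivalent dimension count via $h$-polynomials, which would essentially be the theorem itself) your argument does not close.

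A second concrete omission: you never say what the new degree-$2$ generators attached to the boundary facets $H_K(k)$ should map to in $H^\ast(X_\Phi)^{W_K}$. There is no facet of $P_W$ corresponding to $H(k)$, so this image must be a specific linear combination of orbit sums; the paper takes $\varphi(\tau_{s_k})=\sum_{I}\sum_{v} c_k^{I,v}\tau_{v(I)}$, where the coefficients come from expanding $e_I-e_{v(I)}=\sum_{k\in K}c_k^{I,v}\alpha_k^\v$. This choice is forced by the linear relations, and verifying that the products $\tau_I\tau_{s_k}$ (for $I$ not $s_k$-invariant) map to zero requires a nontrivial vanishing lemma for these coefficients (Lemma \ref{lem_A_coeff_nonzero} and its B/D analogues) --- this is where most of the case analysis, especially in type $D$, actually lives. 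Your alternative route via the transfer isomorphism $H^\ast(X_\Phi)^{W_K}\cong H^\ast(X_\Phi/W_K;\QQ)$ is sound as far as it goes, but it shifts the entire burden to showing that $X_\Phi/W_K$ is a toric orbifold with the predicted fan, which you correctly note is unestablished even for $K=[n]$ in type $D$; the paper deliberately avoids that route.
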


We will prove Theorem \ref{main} when $\Phi$ is of type A in Section \ref{sec_A_proof_of_main_thm}, and explain how to adjust the proof in types B and C and in type D, in Appendices \ref{sec_typeB} and \ref{sec_typeD}, respectively.

The variety $X_\Phi$ is a regular semisimple Hessenberg variety, as defined and studied by De Mari, Procesi and Shayman in \cite{MPS}.  In \cite{Tym}, Tymoczko defines a representation of $W$ on $H^\ast(X_\Phi;\QQ)$, known as the dot representation. 
Moreover, one can see from \cite{Tym, Teff} 
that the dot representation and the representation determined by the action of $W$ on $\Delta_\Phi$ are the same.  It was shown by Balibanu--Crooks in \cite{BalCro} and  later by Vilonen--Xue in \cite{ViXu} that, for an arbitrary regular semisimple Hessenberg variety $X$ in a flag variety ${\mathcal B}$ associated to $\Phi$, and arbitrary $K \subseteq \Sigma$, there is a regular Hessenberg variety $Y_K \subseteq {\mathcal B}$ such that the graded $\QQ$-algebras
$H^\ast(X;\QQ)^{W_K}$ and $H^\ast(Y_K;\QQ)$ are isomorphic. Therefore, in the case at hand, there is some $Y_K$ such that
\begin{equation} \label{torreg}
H^\ast(X_\Phi(K);\QQ) \cong H^\ast(Y_K;\QQ).
\end{equation}
However, it is not necessarily the case that we can choose such a $Y_K$ that is isomorphic with $X_\Phi(K)$, as we will see in Section \ref{sec_hess}.

Work of Danilov in \cite{Dan}  and that of Jurkiewicz in \cite{Jur} tells us that the face numbers of $P_W(K)$ determine the betti numbers of $X_\Phi(K)$, and vice versa.  (See Theorem \ref{theorem_cohom_toric_var} below.) This allows us two approaches to understanding these face numbers.  First, Stembridge gives in \cite{Stem2} essentially combinatorial formulas for the characters of the representations under consideration here.  Combining these formulas with Theorem \ref{main}, we obtain essentially combinatorial formulas for the face numbers.  Second, Precup gives in \cite{Pre} combinatorial formulas for the betti numbers of regular Hessenberg varieties.  Combining Precup's formulas with Theorem \ref{theorem_cohom_toric_var} and (\ref{torreg}), we obtain a priori different formulas.

We do not consider the exceptional crystallographic root systems in this paper.  We do expect that the analogue of Theorem \ref{main} holds for these systems also.  We mention this and other open questions in Section \ref{openquestions}.

\bigskip
\noindent \textbf{Acknowledgements.}  

This research is supported in part by Osaka City University Advanced Mathematical Institute (MEXT Joint Usage/Research Center on Mathematics and Theoretical Physics). 
Horiguchi is supported in part by JSPS Grant-in-Aid for Young Scientists: 19K14508.  Masada was supported in part by JSPS Grant-in-Aid for Scientific Research 19K03472 and a HSE University Basic Research Program. Shareshian was partially supported by NSF grant DMS-1518389.
Song is supported by Basic Science Research Program through the National Research Foundation of Korea (NRF) funded by the Ministry of Education (NRF-2018R1D1A1B07048480) and a KIAS Individual Grant (MG076101) at Korea Institute for Advanced Study. 

This project was initiated while Horiguchi, Masuda and Song were visiting Fields institute to attend the Thematic program on Toric Topology and Polyhedral Products. We are grateful for the support of Fields Institute and the organizers of the program.  We are grateful for the hospitality of  Washington University in St. Louis and Osaka City University Advanced Mathematical Institute (MEXT Joint Usage/Research Center on Mathematics and Theoretical Physics), where parts of this research were conducted.

\section{Toric orbifolds} \label{sec_toric_orb}
We recall here the presentation given  by Danilov  and by Jurkiewicz for the cohomology of a toric variety associated to the normal fan of a rational, simple polytope.  (See for example \cite[Chapter VII.3]{Ew} for a discussion of the relation between fans and toric varieties.)  We discuss in particular the case where the fan in question arises from a root system.
We refer the reader to \cite{Hum1} and \cite{Hum2} for relevant Lie-theoretic terminology.

\subsection{Cohomology rings of toric orbifolds} \label{subsec_cohom_toric_orb}
A rational polytope $P$ (with respect to a lattice $M$) is \emph{simple} if, for every positive integer $k$, every codimension-$k$ face of $P$ is the intersection of $k$ facets.  The normal fan $\Delta(P)$ to a simple polytope is called a \emph{simplicial fan}, and the corresponding toric variety $X(P)$ is a \emph{toric orbifold}.

Next we record a version that suits our purposes of the results of Danilov and of Jurkiewicz mentioned above.  We denote by $N$ the dual lattice of $M$ and denote by $\left<~ ,~\right>$ the standard pairing between $M$ and $N$.

\begin{theorem}\cite[Theorem~10.8 and Remark~10.9]{Dan} \label{theorem_cohom_toric_var}
Let $P$ be a simple polytope, rational with respect to the lattice $M$.  
Let $X(P)$ be the toric orbifold associated to $P$ and let $\mc{F}(P)$ be the set of facets of $P$. For each $F \in \mc{F}(P)$, let $\nu(F) \in N$ be an (inward) normal vector to $F$.  
Then
$$H^\ast(X(P)) \cong \QQ[\tau_F \mid F\in \mc{F}(P)]/\mc{K},$$
where $\deg\tau_F=2$ and $\mc{K}$ is the ideal generated by 
\begin{enumerate}
\item $\sum_{F\in \mc{F}(P)} \left<u, \nu(F)\right>\tau_F$ for all $u\in M$; and 
\item $\prod_{k=1}^r \tau_{F_{i_k}}$ for all $F_{i_1}, F_{i_2}, \dots, F_{i_r}\in \mathcal{F}(P)$ with $F_{i_1} \cap F_{i_2}\cap \dots \cap F_{i_r} =\emptyset$. 
\end{enumerate}
\end{theorem}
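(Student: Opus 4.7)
The plan is to follow the standard Danilov--Jurkiewicz argument, adapted to the orbifold (simplicial, not necessarily smooth) setting where $\mathbb{Q}$-coefficients are essential. First I would use the torus action on $X(P)$ together with a generic one-parameter subgroup to produce a Bia\l ynicki-Birula--type decomposition of $X(P)$ into even-dimensional affine cells indexed by the vertices of $P$, with the cell at vertex $v$ having complex dimension equal to the number of edges of $P$ leaving $v$ in the chosen direction. This yields two crucial consequences: the Poincar\'e polynomial of $X(P)$ equals the $h$-polynomial of $P$, and the cohomology $H^\ast(X(P);\QQ)$ is concentrated in even degrees and generated by algebraic cycles. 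In particular, combined with Poincar\'e duality on the orbifold, $H^\ast(X(P);\QQ)$ is generated in degree $2$ by the classes of the $T$-invariant Weil divisors.

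Next I would introduce, for each facet $F \in \mc{F}(P)$, the closure $D_F$ of the $T$-orbit corresponding to the ray $\RR_{\geq 0}\nu(F)$ of the normal fan, and set $\tau_F = [D_F]/\mathrm{mult}(F) \in H^2(X(P);\QQ)$ (the rational normalization accounts for the orbifold singularities). Verification of the two families of relations is essentially formal:  for $u \in M$, the character $\chi^u$ is a global rational function on $X(P)$ whose principal divisor is exactly $\sum_F \langle u,\nu(F)\rangle D_F$, giving the linear relations in (1); and if facets $F_{i_1},\dots,F_{i_r}$ have empty common intersection in $P$, then the corresponding rays of $\Delta(P)$ do not span a cone, so the divisors $D_{F_{i_1}},\dots,D_{F_{i_r}}$ have empty set-theoretic intersection in $X(P)$, which forces the monomial relation in (2) after passing to cohomology.

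To finish, I would show that these relations are sufficient, so that the quotient ring $R = \QQ[\tau_F]/\mc{K}$ is actually isomorphic to $H^\ast(X(P);\QQ)$. The map $R \twoheadrightarrow H^\ast(X(P);\QQ)$ is surjective by the first paragraph, and to see it is injective I would compare Hilbert series. The ideal generated by the relations in (2) alone is the Stanley--Reisner ideal of the boundary complex $\partial P^\ast$ of the dual simplicial polytope, so $R$ before imposing (1) is the Stanley--Reisner ring of a simplicial sphere, and the $n$ linear forms in (1) constitute a linear system of parameters (this uses that the $\nu(F)$ span $N_\QQ$ and that the fan is simplicial). Hence $R$ is the Artinian reduction of a Cohen--Macaulay Stanley--Reisner ring, and its Hilbert series equals the $h$-polynomial of $P$, matching the Poincar\'e polynomial computed from the cell decomposition.

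The main obstacle is the last step: producing the cell decomposition and proving that the linear forms in (1) form a regular sequence on the Stanley--Reisner ring so that the Hilbert series really does collapse to the $h$-polynomial. In the simplicial (orbifold) case this uses Reisner's criterion together with the fact that the normal fan is simplicial and complete, and is where rational coefficients become indispensable, since integrally the divisor classes $[D_F]$ may satisfy extra torsion relations coming from the finite stabilizers of the orbifold.
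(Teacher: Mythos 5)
The paper offers no proof of this statement: it is quoted directly from Danilov (Theorem~10.8 and Remark~10.9), so there is no internal argument to compare against. Your proposal is a correct outline of the standard Danilov--Jurkiewicz proof in the simplicial/orbifold setting, and all of its ingredients (Bia\l ynicki-Birula cells indexed by vertices, vanishing of odd cohomology, the principal-divisor relations for characters, the Stanley--Reisner ring of the dual sphere with an l.s.o.p., Reisner's criterion, and the Hilbert-series count against the $h$-polynomial) are the ones used in the literature. Two small points deserve care. First, the passage from ``generated by algebraic cycles'' to ``generated in degree $2$'' is not a consequence of Poincar\'e duality alone; it uses that each orbit closure $V(\sigma)$ is a \emph{rational} multiple of the product $\prod_{\rho\subset\sigma}[D_\rho]$, which is exactly where simpliciality and $\QQ$-coefficients enter, so this should be stated as the reason rather than duality. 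Second, for the linear forms to be an l.s.o.p.\ it is not enough that the $\nu(F)$ span $N_\QQ$ globally; one needs that the normals of the facets through each fixed vertex are linearly independent, which is what completeness plus simpliciality of the normal fan gives. Your normalization $\tau_F=[D_F]/\mathrm{mult}(F)$ is also the right way to reconcile the theorem's allowance of non-primitive normal vectors (as the paper indeed uses, e.g.\ $2e_n$ in type $B$) with the divisor-class relations. With those two clarifications the argument is complete and matches the cited source.
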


\subsection{Toric varieties associated to Weyl chambers} \label{subsect:toricWeyCambers}
Let $E$ be an $n$-dimensional Euclidean space with an inner product $( \ , \ )$.
Let $\Phi$ be a root system in $E$.
Let $L(\Phi) \subset E$ be the root lattice of $\Phi$ and $\hat{L}(\Phi^{\v}) \subset E^*$ its dual lattice which is called the coweight lattice of $\Phi$. 
Fix a set of simple roots $\Sigma\colonequals \{\alpha_1, \ldots, \alpha_n \} \subset E$ and its dual basis $\Sigma^\ast=\{\omega_1, \ldots, \omega_n\} \subset E^*$. Then $L(\Phi)$ and $\hat{L}(\Phi^{\v})$ are the lattices generated by $\Sigma$ and $\Sigma^\ast$, respectively. 

Let $W \leq GL(E)$ be the Weyl group, generated by reflections through hyperplanes orthogonal to simple roots. We embed $W$ in $GL(E^\ast)$, by the usual dual representation of $W$ on $E^*$.
We now consider the fan $\Delta_{\Phi}$ in $E^\ast$ with maximal cones 
\begin{equation*} \label{eq:maximalcone}
\sigma_u\colonequals \text{cone}(u^{-1}\omega_1, \ldots, u^{-1}\omega_n) \ \ \ \textrm{for} \ u\in W.
\end{equation*}
The set of primitive generators for the $1$-dimensional cones in $\Delta_\Phi$ is 
\begin{equation*} \label{eq_Phistar}
\Phi^*\colonequals \displaystyle\bigcup_{u \in W} \{u^{-1}\omega_1,u^{-1}\omega_2,\ldots,u^{-1}\omega_n\}.
\end{equation*}
Since the set $\{\omega_1, \ldots, \omega_n\}$ forms a $\ZZ$-basis of $\hat{L}(\Phi^{\v})$, the fan $\Delta_\Phi$ is nonsingular. Hence the associated toric variety $X_\Phi$ is smooth. 
For simplicity, we will write $\Delta$ and $X$ for $\Delta_\Phi$ and $X_\Phi$, respectively, from now on.

\subsection{The $W$-permutohedron}
The fan $\Delta$ is the normal fan to a simple, lattice polytope $P_W$, know as a \emph{weight polytope} or \emph{W-permutohedron}.   Fix a point ${\mathbf a} \in E$ such that $(\alpha,{\mathbf a}) \neq 0$ for all $\alpha \in \Phi$, and let $P_W$ be the convex hull of the $W$-orbit of~${\mathbf a}$,
\begin{equation} \label{eq_weight_poly}
P_W = \text{conv}\{w(\mathbf{a})\mid w\in W\} \subset E.
\end{equation}
(A short proof that $\Delta$ is the normal fan to $P_W$ appears in \cite[Appendix A]{Kam}.)

\section{The permutohedron and permutohedral variety of type $\An$}

\label{sec_perm_and_permutohedral_var}
In this section, we discuss in detail the $W$-permutohedron and its normal fan when $\Phi$ is of type $\An$.

\subsection{Permutohedron}\label{subsec_permutohedron}
Let $E$ be the subspace of $\RR^n$ defined by  
\begin{equation*}
E=\{(x_1,\ldots,x_{n}) \in \RR^{n} \mid x_1+\cdots+x_{n}=0 \}.
\end{equation*}
Let $\Phi_{\An}$ be the root system of type $\An$ in $E$ generated by simple roots 
\begin{equation}\label{eq_A_simple_roots}
\alpha_i=t_i-t_{i+1} ~ \text{ for } 1 \leq i \leq n-1,
\end{equation}
where $\{t_1,t_2,\ldots,t_{n}\}$ is the standard basis of $\RR^{n}$. The Weyl group $W_{\An}$ is the symmetric group $\SS_{n}$ and acts on $\RR^{n}$ by permuting coordinates.

Take an element $(a_1, \dots, a_n)\in E$ with $a_1<\cdots<a_n$ and define the permutohedron $P_{A_{n-1}}$ by 
\begin{equation*} \label{eq_A_permutohedron}
P_{\An} \colonequals \text{\rm conv}\{(a_{u(1)},\dots,a_{u(n)})\in E \mid u\in \mathfrak{S}_n\}. 
\end{equation*}
The polytope $P_{\An}$ is $(n-1)$-dimensional and sits in the $(n-1)$-dimensional vector space $E$.  
Here, the vertex $(a_{u(1)},\dots,a_{u(n)})$ is labeled by the one-line notation of $u\in \mathfrak{S}_n$. For instance, the vertex $(a_2,a_3,a_1)$ is labeled by $231$. (See Figure~\ref{fig_labeling_on_PAn}.)

To describe the face structure of $P_{\An}$, we introduce the following notation. 

\begin{definition}
We denote by $\M_{\An}$ the set of all nonempty proper subsets of $[n]$. 
\end{definition}

Every facet of $P_{\An}$ is of the form 
\begin{equation}\label{eq_A_facet_convex_hull}
F(\indI)\colonequals\left\{(x_1, \dots, x_{n}) \in P_{\An} ~\Big|~  \sum_{i\in I}x_i =\sum_{i\in [|I|]}a_i\right\}
\end{equation}
for $\indI \in \M_{\An}$, and every such $F(\indI)$ is a facet.

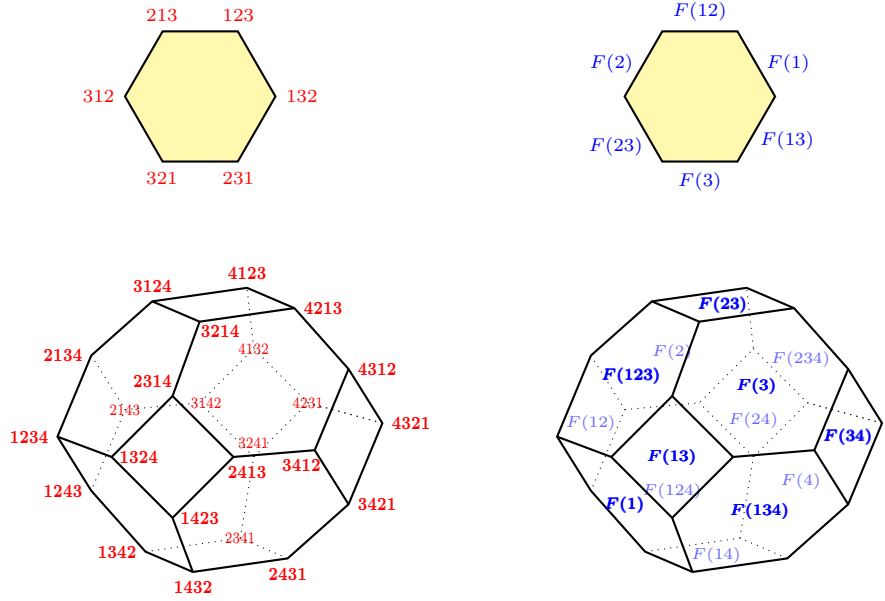
\begin{figure}
\begin{tikzpicture}[scale=0.9]
\begin{scope}[xshift=10]
\node[thick, fill=yellow!40, regular polygon, regular polygon sides=6, draw, minimum size=2cm](m) at (0,0) {};
\node[red, above] at (m.corner 1){\scriptsize$123$};
\node[red,above] at (m.corner 2){\scriptsize$213$};
\node[red,left] at (m.corner 3){\scriptsize$312$};
\node[red,below] at (m.corner 4){\scriptsize$321$};
\node[red,below] at (m.corner 5){\scriptsize$231$};
\node[red,right] at (m.corner 6){\scriptsize$132$};
\end{scope}

\begin{scope}[xshift=220]
\node[thick, fill=yellow!40, regular polygon, regular polygon sides=6, draw, minimum size=2cm](m) at (0,0) {};
\coordinate (123) at (m.corner 1); 
\coordinate (213) at (m.corner 2); 
\coordinate (312) at (m.corner 3); 
\coordinate (321) at (m.corner 4); 
\coordinate (231) at (m.corner 5); 
\coordinate (132) at (m.corner 6); 

\node[blue, above] at ($1/2*(123)+1/2*(213)$) {\scriptsize$F(12)$}; 
\node[blue, left] at ($1/2*(213)+1/2*(312)$) {\scriptsize$F(2)$}; 
\node[blue, left] at ($1/4*(312)+3/4*(321)$) {\scriptsize$F(23)$}; 
\node[blue, below] at ($1/2*(321)+1/2*(231)$) {\scriptsize$F(3)$}; 
\node[blue, right] at ($2/3*(231)+1/3*(132)$) {\scriptsize$F(13)$}; 
\node[blue, right] at ($1/2*(123)+1/2*(132)$) {\scriptsize$F(1)$}; 
\end{scope}

\begin{scope}[yshift=-200]

\begin{scope}[scale=0.2, xshift=-250]
\coordinate (1234) at (0,10); 
\coordinate (1243) at (2.5,6); 
\coordinate (1342) at (6.5,1.5); 
\coordinate (1432) at (10,0);
\coordinate (1324) at (4,8.5);
\coordinate (1423) at (8.5,4);

\coordinate (2143) at (5,12);
\coordinate (4123) at (14,21);
\coordinate (3124) at (7,20);
\coordinate (2134) at (2.5,16);
\coordinate (3142) at (10.5,12.5);
\coordinate (4132) at (14.5,16.5);

\coordinate (4312) at (21.5,15);
\coordinate (4213) at (17.5,19.5);
\coordinate (2413) at (13,8.5);
\coordinate (2314) at (8.5,13);
\coordinate (3214) at (10.5,18.5);
\coordinate (3412) at (19,9);

\coordinate (2431) at (17,1);
\coordinate (3421) at (21.5,5);
\coordinate (4321) at (24,11);
\coordinate (2341) at (13.5,2.5);
\coordinate (3241) at (14.5,8.5);
\coordinate (4231) at (18.5,12.5);

\node[red] at (5,12) {\scalebox{.8}[1.0]{\tiny{2143}}};
\node[red] at (13.5,2.5) {\scalebox{.8}[1.0]{\tiny{2341}}};
\node[red] at (11,12.5) {\scalebox{.8}[1.0]{\tiny{3142}}};
\node[red] at (14.5,16.5) {\scalebox{.8}[1.0]{\tiny{4132}}};
\node[above, red] at (14.5,8.5) {\scalebox{.8}[1.0]{\tiny{3241}}};
\node[red] at (18.5,12.5) {\scalebox{.8}[1.0]{\tiny{4231}}};

\draw[thick] (1234)--(1243)--(1342)--(1432)--(2431)--(3421)--(4321)--(4312)--(4213)--(4123)--(3124)--(2134)--cycle;
\draw[thick] (1234)--(1324)--(1423)--(1432);
\draw[thick] (1324)--(2314)--(3214)--(3124);
\draw[thick] (3214)--(4213);
\draw[thick] (2314)--(2413)--(1423);
\draw[thick] (2413)--(3412)--(3421);
\draw[thick] (4312)--(3412);

\draw[dotted] (2134)--(2143)--(1243);
\draw[dotted] (1342)--(2341)--(2431);
\draw[dotted] (2143)--(3142)--(4132)--(4123);
\draw[dotted] (4132)--(4231)--(4321);
\draw[dotted] (2341)--(3241)--(3142);
\draw[dotted] (4321)--(4231)--(3241);

\node[red, left] at (0,10) {\scalebox{.8}[1.0]{\scriptsize{\textbf{1234}}}};
\node[red, left] at (2.5,6) {\scalebox{.8}[1.0]{\scriptsize{\textbf{1243}}}};
\node[red, left] at (6.5,1.5) {\scalebox{.8}[1.0]{\scriptsize{\textbf{1342}}}};
\node[red, below] at (10,0) {\scalebox{.8}[1.0]{\scriptsize{\textbf{1432}}}};
\node[red, below] at (17,1) {\scalebox{.8}[1.0]{\scriptsize{\textbf{2431}}}};
\node[red, right] at (21.5,5) {\scalebox{.8}[1.0]{\scriptsize{\textbf{3421}}}};
\node[red, right] at (24,11) {\scalebox{.8}[1.0]{\scriptsize{\textbf{4321}}}};
\node[red, right] at (21.5,15) {\scalebox{.8}[1.0]{\scriptsize{\textbf{4312}}}};
\node[red, right] at (17.5,19.5) {\scalebox{.8}[1.0]{\scriptsize{\textbf{4213}}}};
\node[red, above] at (14,21) {\scalebox{.8}[1.0]{\scriptsize{\textbf{4123}}}};
\node[red, above] at (7,20) {\scalebox{.8}[1.0]{\scriptsize{\textbf{3124}}}};
\node[red, left] at (2.5,16) {\scalebox{.8}[1.0]{\scriptsize{\textbf{2134}}}};
\node[red] at (6,8.5) {\scalebox{.8}[1.0]{\scriptsize{\textbf{1324}}}};
\node[red] at (10.5,4) {\scalebox{.8}[1.0]{\scriptsize{\textbf{1423}}}};
\node[red, below] at (14,8.5) {\scalebox{.8}[1.0]{\scriptsize{\textbf{2413}}}};
\node[red, above] at (7,13) {\scalebox{.8}[1.0]{\scriptsize{\textbf{2314}}}};
\node[red] at (12,17.8) {\scalebox{.8}[1.0]{\scriptsize{\textbf{3214}}}};
\node[red, below] at (18,9) {\scalebox{.8}[1.0]{\scriptsize{\textbf{3412}}}};
\end{scope}

\begin{scope}[scale=0.2, xshift=800]

\coordinate (1234) at (0,10); 
\coordinate (1243) at (2.5,6); 
\coordinate (1342) at (6.5,1.5); 
\coordinate (1432) at (10,0);
\coordinate (1324) at (4,8.5);
\coordinate (1423) at (8.5,4);

\coordinate (2143) at (5,12);
\coordinate (4123) at (14,21);
\coordinate (3124) at (7,20);
\coordinate (2134) at (2.5,16);
\coordinate (3142) at (10.5,12.5);
\coordinate (4132) at (14.5,16.5);

\coordinate (4312) at (21.5,15);
\coordinate (4213) at (17.5,19.5);
\coordinate (2413) at (13,8.5);
\coordinate (2314) at (8.5,13);
\coordinate (3214) at (10.5,18.5);
\coordinate (3412) at (19,9);

\coordinate (2431) at (17,1);
\coordinate (3421) at (21.5,5);
\coordinate (4321) at (24,11);
\coordinate (2341) at (13.5,2.5);
\coordinate (3241) at (14.5,8.5);
\coordinate (4231) at (18.5,12.5);

\node[blue!60] at ($0.5*(1234)+0.5*(2143)$) {\tiny$F(12)$};
\node[blue!60] at ($0.5*(4132)+0.5*(4312)$) {\tiny$F(234)$};
\node[blue!60] at ($0.5*(1342)+0.5*(2431)$) {\tiny$F(14)$};
\node[blue!60] at ($0.5*(3241)+0.5*(3421)$) {\tiny$F(4)$};
\node[blue!60] at ($0.5*(2134)+0.5*(4132)$) {\tiny$F(2)$};
\node[blue!60, below] at ($0.5*(1243)+0.5*(3241)$) {\tiny$F(124)$};
\node[blue!60, below] at ($0.5*(4132)+0.5*(3241)$) {\tiny$F(24)$};

\draw[thick] (1234)--(1243)--(1342)--(1432)--(2431)--(3421)--(4321)--(4312)--(4213)--(4123)--(3124)--(2134)--cycle;
\draw[thick] (1234)--(1324)--(1423)--(1432);
\draw[thick] (1324)--(2314)--(3214)--(3124);
\draw[thick] (3214)--(4213);
\draw[thick] (2314)--(2413)--(1423);
\draw[thick] (2413)--(3412)--(3421);
\draw[thick] (4312)--(3412);

\draw[dotted] (2134)--(2143)--(1243);
\draw[dotted] (1342)--(2341)--(2431);
\draw[dotted] (2143)--(3142)--(4132)--(4123);
\draw[dotted] (4132)--(4231)--(4321);
\draw[dotted] (2341)--(3241)--(3142);
\draw[dotted] (4321)--(4231)--(3241);

\node[blue] at ($0.5*(2134)+0.5*(2314)$) {\tiny$\pmb{F(123)}$};
\node[blue] at ($0.5*(1234)+0.5*(1432)$) {\tiny$\pmb{F(1)}$};
\node[blue] at ($0.5*(1324)+0.5*(2413)$) {\tiny$\pmb{F(13)}$};
\node[blue] at ($0.5*(1423)+0.5*(3421)$) {\tiny$\pmb{F(134)}$};
\node[blue] at ($0.5*(3214)+0.5*(3412)$) {\tiny$\pmb{F(3)}$};
\node[blue] at ($0.5*(3412)+0.5*(4321)$) {\tiny$\pmb{F(34)}$};
\node[blue] at ($0.5*(3124)+0.5*(4213)$) {\tiny$\pmb{F(23)}$};

\end{scope}

\end{scope}

\end{tikzpicture}
\caption{$P_{A_2}\subset \RR^3$ and $P_{A_3}\subset \RR^4$, and their labeling on facets.}
\label{fig_labeling_on_PAn}
\end{figure}

\begin{lemma}\label{lem_A_facets_intersecting}
For $\indI, \indJ \in \M_{\An}$, the facets $F(\indI)$ and $F(\indJ)$ of $P_{\An}$ intersect nontrivially if and only if $\indI \subset \indJ$ or $\indJ \subset \indI$. 
\end{lemma}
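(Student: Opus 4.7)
The plan is to characterize exactly which vertices of $P_{\An}$ lie on the facet $F(I)$ and then read off the intersection condition from that characterization. Since $a_1 < a_2 < \cdots < a_n$, the defining inequality behind the equality $\sum_{i \in I} x_i = \sum_{i \in [|I|]} a_i$ is an inequality of the form $\sum_{i \in I} x_i \geq \sum_{i \in [|I|]} a_i$ saturated precisely when the coordinates indexed by $I$ take the $|I|$ smallest values. Concretely, the vertex $(a_{u(1)},\ldots,a_{u(n)})$ lies in $F(I)$ if and only if $u^{-1}(\{1,\ldots,|I|\}) = I$. This is the key lemma I would prove first (a one-line rearrangement argument using strict monotonicity of the $a_i$).

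Next I would use the standard fact that a nonempty intersection of faces of a polytope is itself a face, hence contains at least one vertex. So if $F(I) \cap F(J) \neq \emptyset$, then there exists $u \in \SS_n$ with simultaneously $u^{-1}(\{1,\ldots,|I|\}) = I$ and $u^{-1}(\{1,\ldots,|J|\}) = J$. Assuming without loss of generality that $|I| \leq |J|$, the chain $\{1,\ldots,|I|\} \subseteq \{1,\ldots,|J|\}$ pulls back under $u^{-1}$ to the chain $I \subseteq J$, giving one direction of the lemma.

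For the converse, given $I \subseteq J$ with $|I| \leq |J|$, I would exhibit an explicit vertex in $F(I) \cap F(J)$ by choosing any $u \in \SS_n$ which sends $I$ bijectively onto $\{1,\ldots,|I|\}$, sends $J \setminus I$ bijectively onto $\{|I|+1,\ldots,|J|\}$, and sends $[n] \setminus J$ bijectively onto $\{|J|+1,\ldots,n\}$. Such a $u$ exists because the three source blocks partition $[n]$ with the correct cardinalities, and by construction $u^{-1}(\{1,\ldots,|I|\}) = I$ and $u^{-1}(\{1,\ldots,|J|\}) = J$, so the vertex $(a_{u(1)},\ldots,a_{u(n)})$ lies in both facets.

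I do not foresee a real obstacle here; the entire content is the vertex characterization, which is immediate from the strict inequalities $a_1 < \cdots < a_n$ and the fact that the minimum of $\sum_{i \in I} x_i$ over the vertex set $\{(a_{u(1)},\ldots,a_{u(n)}) : u \in \SS_n\}$ is attained exactly when the $|I|$ smallest $a_k$'s are placed in the coordinates indexed by $I$. Everything else is set-theoretic bookkeeping on pullbacks of nested subsets of $[n]$ under a permutation.
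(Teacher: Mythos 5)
Your proof is correct and follows essentially the same route as the paper's: both arguments reduce to the observation that the vertex $(a_{u(1)},\dots,a_{u(n)})$ lies on $F(I)$ exactly when $u(I)=[|I|]$, and then do set-theoretic bookkeeping with the nested intervals $[|I|]\subseteq[|J|]$. If anything, your explicit construction of a common vertex in the case $I\subseteq J$ is slightly more complete than the paper's one-line containment claim, which does not explicitly verify that the displayed subset of $F(J)$ is nonempty.
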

\begin{proof}
Assume that $\indI \subset \indJ$. Then, 
\[
\{(x_1, \dots, x_{n})\in F(\indJ) \mid \sum_{i\in I}x_i=\sum_{i\in [|\indI|]}a_i\} \subset F(\indI)\cap F(\indJ).
\]
Conversely, suppose that $\indI \not\subset \indJ$ and $\indJ \not\subset \indI$. Without loss of generality,  we may assume that $|\indI| \leq |\indJ|$. Now, we choose an element $i\in \indI \setminus \indJ$. Then, the $i$-th coordinate of any vertex of $F(\indI)$ is an element of $\{a_k\mid k\in [|\indI|]\}$, while the $i$-th coordinate of any vertex of $F(\indJ)$ is an element of $\{a_k\mid k\in [n]\setminus [|\indJ|]\}$. Since $|\indI| \leq |\indJ|$, two sets $[|\indI|]$ and $[n]\setminus [|\indJ|]$ have empty intersection, which implies that $F(\indI)$ and $F(\indJ)$ have no common vertex. Hence, the lemma follows. 
\end{proof}

Let $\{\epsilon_1, \dots, \epsilon_{n}\}\subset (\RR^{n})^\ast$ be the basis dual to the standard basis $\{t_1, \dots, t_{n}\}$ of $\RR^{n}$. Then $E^\ast=(\RR^n)^\ast/\RR\left<\epsilon_1+\ldots + \epsilon_n\right>$. We denote by $e_i\in E^\ast$ the image of $\epsilon_i$ under the projection $(\RR^{n})^* \to E^*$. The coweights, namely the duals of simple roots \eqref{eq_A_simple_roots}, are 
\[
\omega_i=e_1+\cdots+e_i ~\text{ for } 1 \leq i \leq n-1
\]
The coweight lattice $\hat{L}(\Phi_{\An}^{\v})$ is generated by $\omega_1,\dots,\omega_{n-1}$.  Moreover, an (inward) normal vector to $F(I)$, regarded as an element of $\hat{L}(\Phi_{\An}^{\v})$, is given by  
\begin{equation} \label{eq:A_facet_normal}
e_I\colonequals\sum_{i\in I}e_i.
\end{equation}

Since $P_{\An}$ is a simple polytope, any codimension-$\d$ face $F$ of $P_{\An}$ is the intersection of $\d$ facets of $P_{\An}$, say $F(I_1)\cap  \cdots \cap  F(I_\d)$. Hence, Lemma~\ref{lem_A_facets_intersecting} implies that every face $F$ of $P_{\An}$ is determined by a nested sequence 
\begin{equation}\label{eq_Anested_seq}
I_\bullet \colonequals (I_1 \subsetneq I_2\subsetneq \cdots  \subsetneq I_\d)
\end{equation}
from $\M_{\An}$.
Such a face $F$ will be denoted by $F(I_\bullet)\colonequals F(I_1) \cap \dots \cap F(I_\d)$.

We write $X_{\An}$ for the toric variety associated with the permutohedron $P_{\An}$. This variety is often called the \emph{permutohedral variety}. We refer to \cite{MPS, Pro, Huh}. Since $P_{\An}$ is a simple lattice polytope, we may apply Theorem~\ref{theorem_cohom_toric_var} to $X_{\An}$, which gives the following presentation. 

\begin{proposition} \label{prop:A_cohom_of_perm}
The cohomology ring of $X_{\An}$ is given by 
\begin{equation}\label{eq_A_cohom_of_perm}
H^\ast(X_{\An}) \cong \QQ\big[\tau_\indI \mid \indI \in \M_{\An} \big]/\left(\mc{I}+\mc{J}\right)
\end{equation}
with $\deg\tau_\indI=2$, where
\begin{enumerate}
\item $\mc{I}=\big(\sum_{\indI \in \M_{\An} } \langle u, e_{\indI} \rangle \tau_{\indI}\mid u\in L(\Phi_{\An}) \big)$ and
\item $\mathcal{J}=\big( \tau_I \tau_J \mid I, J \in\M_{\An} \text{ with } I\not\subset J \text{ and } J \not\subset I\big)$.
\end{enumerate}
Here, $e_I$ is the element of the coweight lattice $\hat{L}(\Phi_{\An}^\v)$ defined in \eqref{eq:A_facet_normal} and 
$\left< \ , \ \right>: L(\Phi_{\An}) \times \hat{L}(\Phi_{\An}^{\v}) \to \RR$ denotes the natural paring between the root lattice and the coweight lattice. 
\end{proposition}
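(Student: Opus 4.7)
The plan is to apply Theorem \ref{theorem_cohom_toric_var} directly to the simple lattice polytope $P_{\An}$, using the description of its facets and their inward normals developed in Section \ref{subsec_permutohedron}. The identifications are: the lattice $M$ of Theorem \ref{theorem_cohom_toric_var} is the root lattice $L(\Phi_{\An})$ and its dual $N$ is the coweight lattice $\hat{L}(\Phi_{\An}^\v)$; the facets of $P_{\An}$ are precisely the $F(I)$ for $I\in\M_{\An}$, as recorded after equation \eqref{eq_A_facet_convex_hull}; and the inward normal $\nu(F(I))\in\hat{L}(\Phi_{\An}^\v)$ is $e_I=\sum_{i\in I}e_i$ by \eqref{eq:A_facet_normal}. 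With these identifications, the linear relations of part (1) of Theorem \ref{theorem_cohom_toric_var} translate word for word into the generators of $\mathcal{I}$.

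The substantive point is to show that the Stanley--Reisner relations of part (2) of Theorem \ref{theorem_cohom_toric_var} -- namely all products $\prod_{k=1}^{r}\tau_{F(I_k)}$ such that $F(I_1)\cap\cdots\cap F(I_r)=\emptyset$ -- are generated by the quadratic monomials in $\mathcal{J}$. I would argue this through the claim that, for $I_1,\ldots,I_r\in\M_{\An}$, the intersection $F(I_1)\cap\cdots\cap F(I_r)$ is nonempty if and only if $\{I_1,\ldots,I_r\}$ forms a chain under inclusion. The ``if'' direction is already contained in the discussion preceding \eqref{eq_Anested_seq}, where a nested sequence of subsets is shown to cut out a face of $P_{\An}$. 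For the ``only if'' direction, if the $I_k$ do not form a chain then some pair $I_a,I_b$ among them is incomparable, and Lemma \ref{lem_A_facets_intersecting} immediately gives $F(I_a)\cap F(I_b)=\emptyset$, whence the total intersection is empty.

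Combining these observations, a generator $\prod_{k=1}^r \tau_{F(I_k)}$ of the Stanley--Reisner ideal of Theorem \ref{theorem_cohom_toric_var} arises exactly when the collection $\{I_1,\ldots,I_r\}$ contains an incomparable pair $I_a,I_b$, and in that case the product is divisible by the quadratic generator $\tau_{I_a}\tau_{I_b}$ of $\mathcal{J}$. Hence $\mathcal{J}$ coincides with the ideal produced by part (2) of Theorem \ref{theorem_cohom_toric_var}, and together with $\mathcal{I}$ it yields the presentation \eqref{eq_A_cohom_of_perm}.

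There is no serious obstacle here; the proposition is essentially an unpacking of Theorem \ref{theorem_cohom_toric_var} for the particular polytope $P_{\An}$. The only subtlety worth flagging is the verification that the lattice $M$ in Danilov's theorem is indeed $L(\Phi_{\An})$: this follows because the fan $\Delta_{\Phi}$ of Section \ref{subsect:toricWeyCambers} lives in $E^{\ast}$ with ray generators in $\hat{L}(\Phi^{\v})$, so the dual lattice of characters is the root lattice $L(\Phi)$, and the pairing $\langle\,,\,\rangle$ appearing in $\mathcal{I}$ is the natural one between $L(\Phi_{\An})$ and $\hat{L}(\Phi_{\An}^{\v})$.
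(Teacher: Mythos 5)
Your proposal is correct and follows exactly the route the paper takes: Proposition \ref{prop:A_cohom_of_perm} is obtained by specializing Theorem \ref{theorem_cohom_toric_var} to $P_{\An}$ with facets $F(I)$ and inward normals $e_I$, and the reduction of the Stanley--Reisner relations to the quadratic generators of $\mathcal{J}$ is precisely the flagness observation the paper records in the remark following the proposition, justified by Lemma \ref{lem_A_facets_intersecting}. (Note that only the ``only if'' half of your chain characterization is actually needed for the ideal equality, since a collection of facets with empty total intersection must contain an incomparable, hence disjoint, pair.)
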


\begin{remark}
As $P_{\An}$ is a flag polytope, the generators of $\mathcal{J}$ can be taken to be quadratic forms as in (2) above.
\end{remark}

\subsection{The ring of invariants.}\label{subsec_ring_of_inv_A}

The action of $\SS_{n}$ on the permutohedron $P_{\An}$ induces an action of $\SS_{n}$ on $H^\ast(X_{\An})$.  We now investigate the action of parabolic subgroups of $\SS_{n}$ on $H^\ast(X_{\An})$. 

Let $K$ be a subset of $[n-1]$.  We will write $\SS_K$, rather than $W_K$ as above, for the subgroup of $\SS_n$ generated by $\{s_k:k \in K\}$.
Given $\indI \in \M_{\An}$, we write $\SS_K^\indI$ for the setwise stabilizer of $\indI$ in $\SS_K$. For a non-negative integer $m$, we consider the sum of the $\SS_K$-orbit of $\tau_\indI^m\in H^*(X_{\An})$: 
\begin{equation*} \label{eq_tau_orbit}
[\tau_\indI^m]\colonequals \sum_{w\in \SS_K/\SS_K^\indI}\tau_{w(\indI)}^m\in H^\ast(X_{\An})^{\SS_K}.
\end{equation*}
As $\tau_{u(I)}\tau_{v(I)} \in \mathcal{J}$ whenever $u \neq v$ in $\mathfrak{S}_K/\mathfrak{S}_K^I$, we see that
\begin{equation} \label{eq_tau_Am}
[\tau_\indI^m]=\sum_{w\in \SS_K/\SS_K^\indI}\tau_{w(\indI)}^m=\left( \sum_{w\in \SS_K/\SS_K^\indI}\tau_{w(\indI)}\right)^m =[\tau_\indI]^m,
\end{equation}
where $[\tau_\indI]\colonequals[\tau_\indI^1]$. 

The action of $\SS_{n}$ on $[n]$ determines an action on the set of nested sequences of subsets of $[n]$.  The correspondence between codimension-$d$ faces of $P_{\An}$ and nested sequences from \eqref{eq_Anested_seq} is equivariant with respect to the respective $\SS_{n}$ actions.  
We write $\SS_K^{\indI_\bullet}$ for the subgroup of $\SS_K$ which fixes a nested sequence $\indI_\bullet$. For a non-negative integral vector $m=(m_1,\dots,m_\d)$, we denote the product $\prod_{i=1}^\d\tau_{\indI_i}^{m_i}$ by $\tau_{\indI_\bullet}^m$ and consider the sum of its $\SS_K$-orbit:
\begin{equation*}
[\tau_{\indI_\bullet}^m]\colonequals \sum_{w\in \SS_K/\SS_K^{\indI_\bullet}}\tau_{w(\indI_\bullet)}^m\in H^\ast(X_{\An})^{\SS_K}.
\end{equation*} 

\begin{lemma} \label{lem_invariants}
Let $\indI_\bullet$ be a nested sequence as in \eqref{eq_Anested_seq} and $m=(m_1,\dots,m_\d)$ a vector of non-negative integers. Then $[\tau_{\indI_\bullet}^m]=\prod_{i=1}^\d [\tau_{\indI_i}]^{m_i}.$
\end{lemma}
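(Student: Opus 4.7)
The plan is to expand the right-hand side and match its nonzero contributions in $H^\ast(X_{\An})$ with the orbit sum on the left. Assume first that each $m_i\ge 1$. Applying \eqref{eq_tau_Am} to each factor gives
\[
\prod_{i=1}^{\d}[\tau_{I_i}]^{m_i}=\prod_{i=1}^{\d}[\tau_{I_i}^{m_i}]=\sum_{(J_1,\dots,J_\d)\,\in\,\prod_i(\SS_K\cdot I_i)}\prod_{i=1}^{\d}\tau_{J_i}^{m_i},
\]
and the goal is to show that the surviving terms on the right are precisely $\tau_{J_\bullet}^m$ for $J_\bullet$ running over the $\SS_K$-orbit of $I_\bullet$.

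By Proposition \ref{prop:A_cohom_of_perm} and the Remark following it, the ideal $\mathcal{J}$ is generated by the quadratic monomials $\tau_I\tau_J$ with $I\not\subset J$ and $J\not\subset I$. Consequently $\prod_i\tau_{J_i}^{m_i}$ is nonzero in $H^\ast(X_{\An})$ only when the $J_i$ are pairwise comparable under inclusion; since the $|J_i|=|I_i|$ are strictly increasing, this forces $J_1\subsetneq\cdots\subsetneq J_\d$, so $(J_1,\dots,J_\d)$ is itself a nested sequence.

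The key combinatorial step is to identify these nested tuples with the $\SS_K$-orbit of $I_\bullet$. Write $\SS_K=\SS_{B_1}\times\cdots\times\SS_{B_r}$ for the Young decomposition, where the blocks $B_1,\dots,B_r$ partition $[n]$ according to the maximal intervals of consecutive generators in $K$; two subsets of $[n]$ lie in a common $\SS_K$-orbit iff they have equal cardinality in each block. The inclusion $\SS_K\cdot I_\bullet\subseteq\{\text{nested tuples}\}$ is immediate. For the converse, which I expect to be the main obstacle, set $D_i=I_i\setminus I_{i-1}$ and $E_i=J_i\setminus J_{i-1}$ (with $I_0=J_0=\emptyset$); the hypotheses $J_i\in\SS_K\cdot I_i$ force $|D_i\cap B_j|=|E_i\cap B_j|$ for all $i,j$, and assembling arbitrary bijections $D_i\cap B_j\to E_i\cap B_j$ block-by-block produces a single $w\in\SS_K$ with $w(I_i)=J_i$ for every $i$. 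This simultaneous-representability succeeds precisely because $\SS_K$ acts block-wise and its orbits on subsets are detected entirely by block-wise cardinalities.

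Since each $J_\bullet$ in the $\SS_K$-orbit of $I_\bullet$ appears exactly once in the outer sum, we conclude
\[
\prod_{i=1}^{\d}[\tau_{I_i}]^{m_i}=\sum_{J_\bullet\,\in\,\SS_K\cdot I_\bullet}\tau_{J_\bullet}^m=[\tau_{I_\bullet}^m],
\]
which is the claimed identity; the case where some $m_i=0$ reduces to this one by passing to the sub-nested-sequence supported on the indices with $m_i>0$.
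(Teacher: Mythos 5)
Your proof is correct and follows essentially the same route as the paper's: expand $\prod_i[\tau_{I_i}]^{m_i}$, discard the products killed by the quadratic relations (the non-nested tuples), and realize every surviving nested tuple $(J_1\subsetneq\cdots\subsetneq J_\d)$ with $J_i\in\SS_K\cdot I_i$ as $w(I_\bullet)$ for a single $w\in\SS_K$ assembled block-by-block from bijections on the successive differences $I_i\setminus I_{i-1}\to J_i\setminus J_{i-1}$ --- exactly the paper's construction, with your blocks $B_j$ playing the role of the $\SS_K$-orbits $N_j$. The only difference is that you spell out the step, left implicit in the paper, that nonvanishing of $\prod_i\tau_{J_i}^{m_i}$ forces the $J_i$ to be nested in order of cardinality.
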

\begin{proof}
It suffices to prove 
\[
[\tau_{\indI_\bullet}^m]=\prod_{i=1}^\d [\tau_{\indI_i}^{m_i}]
\]
by \eqref{eq_tau_Am}. The coefficients of monomials in $\tau_\indJ$'s with $J\in \M_{\An}$ are all one on both sides above and any monomial on the left hand side appears on the right hand side. So, it suffices to show that any monomial on the right hand side appears on the left hand side. A monomial on the right hand side is of the form 
\[
\prod_{i=1}^\d\tau_{w_i(\indI_i)}^{m_i} 
\]
with $w_i \in \SS_K$ for $i=1,\dots,\d$. Therefore, it suffices to show the existence of $w\in \SS_K$ such that $w\indI_i=w_i\indI_i$ for all $i$. 

For simplicity, we consider first the special case where $\SS_K=\SS_{n}$. We have 
\[
\begin{split}
&\indI_1\subsetneq \indI_2\subsetneq \dots\subsetneq \indI_\d,\\
&w_1\indI_1\subsetneq w_2\indI_2\subsetneq \dots\subsetneq w_\d\indI_\d. 
\end{split}
\]
Then it is clear that there exists $w\in \SS_{n}$ such that 
\[
w(\indI_i\backslash \indI_{i-1})=w_i\indI_i\backslash w_{i-1}\indI_{i-1} \quad (i=1,\dots,\d)
\]
where $\indI_0=\emptyset$, but this means that $w\indI_i=w_i\indI_i$ for all $i$. 

The argument for the general case is essentially same as that for the special case above. 
We consider the $\SS_K$-orbit decomposition of $[n]$,
\begin{equation*}\label{eq_A_SK_orbit_decomp}
[n]=N_1 \sqcup N_2 \sqcup \cdots \sqcup N_s.
\end{equation*}
This induces a decomposition of $\indI_i$ 
\[
\indI_i=\indI_i^{(1)} \sqcup \indI_i^{(2)} \sqcup \dots \sqcup \indI_i^{(s)}, 
\]
where $\indI_i^{(j)}\colonequals\indI_i\cap N_j$. Then, we can write $w_i=(w_i^{(1)},\dots,w_i^{(s)})$ where $w_i^{(j)} \in \SS_{n}$ fixes $[n] \setminus N_j$ pointwise.
For $j=1,\dots,s$, we have 
\[
w_i\indI_i=w_i^{(1)}\indI_i^{(1)} \sqcup w_i^{(2)}\indI_i^{(2)} \sqcup \dots \sqcup w_i^{(s)}\indI_i^{(s)}
\]
and 
\begin{align*}
\indI_i\backslash \indI_{i-1}&=( \indI_i^{(1)}  \backslash \indI_{i-1}^{(1)} ) \sqcup \dots\sqcup ( \indI_i^{(s)} \backslash \indI_{i-1}^{(s)} ),\\
w_i\indI_i\backslash w_{i-1}\indI_{i-1}&=(w_i^{(1)}\indI_i^{(1)} \backslash w_{i-1}^{(1)}\indI_{i-1}^{(1)} )\sqcup \dots\sqcup 
(w_i^{(s)}\indI_i^{(s)} \backslash w_{i-1}^{(s)}\indI_{i-1}^{(s)}).
\end{align*}
For each $j=1,\dots,s$, there exists a permutation $w^{(j)}$ on $N_j$ such that 
\[
w^{(j)}( \indI_i^{(j)}  \backslash \indI_{i-1}^{(j)} )=w_i^{(j)}\indI_i^{(j)} \backslash w_{i-1}^{(j)}\indI_{i-1}^{(j)} \quad (i=1,\dots,\d).
\]
Therefore, $w=(w^{(1)},\dots,w^{(s)})\in \SS_K$ is the desired element. 
\end{proof}

\begin{remark}
For any subgroup $H$ of $\SS_{n}$, one can consider the sum of the $H$-orbit of $\tau_{\indI_{\bullet}}^m$ but Lemma~\ref{lem_invariants} does not hold in general. 
For instance, if we take $H$ to be the alternating subgroup of $\SS_3$, then the sum of the $H$-orbit of $\tau_1\tau_{12}$ is 
\begin{equation} \label{eq_27-2}
\tau_1\tau_{12}+\tau_2\tau_{23}+\tau_3\tau_{13}.
\end{equation}
On the other hand, the sum of the $H$-orbit of $\tau_1$ is $\tau_1+\tau_2+\tau_3$ and that of $\tau_{12}$ is $\tau_{12}+\tau_{23}+\tau_{13}$ and their product is 
\[
\tau_1\tau_{12}+\tau_1\tau_{13}+\tau_2\tau_{12}+\tau_2\tau_{23}+\tau_3\tau_{23}+\tau_3\tau_{13}, 
\]
which does not coincide with \eqref{eq_27-2}. 
\end{remark}

As a consequence of Lemma~\ref{lem_invariants}, we have the following result. 
\begin{proposition} \label{prop_A_generated_by_deg2}
The ring of invariants $H^\ast(X_{\An})^{\SS_K}$ is generated by $\{[\tau_{\indI}] \mid \indI \in \M_{\An}\}$. 
\end{proposition}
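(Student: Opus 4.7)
The plan is to start from the presentation of $H^\ast(X_{\An})$ given in Proposition~\ref{prop:A_cohom_of_perm} and reduce the claim to Lemma~\ref{lem_invariants}. As a $\QQ$-algebra, $H^\ast(X_{\An})$ is generated by the degree-$2$ classes $\tau_\indI$ with $\indI \in \M_{\An}$, so every homogeneous element is a $\QQ$-linear combination of monomials $\prod_i \tau_{\indJ_i}^{m_i}$.  The quadratic relations in $\mathcal{J}$ imply that such a monomial vanishes unless the distinct indexing sets appearing in it form a chain under inclusion, i.e.\ a nested sequence $\indI_\bullet=(\indI_1\subsetneq\cdots\subsetneq \indI_\d)$ as in \eqref{eq_Anested_seq}.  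Thus the elements $\tau_{\indI_\bullet}^m$, indexed by pairs consisting of a nested sequence $\indI_\bullet$ and an exponent vector $m=(m_1,\dots,m_\d)$ of positive integers, span $H^\ast(X_{\An})$ over $\QQ$.

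Next, given a homogeneous invariant $f\in H^\ast(X_{\An})^{\SS_K}$, I would expand $f=\sum c_{\indI_\bullet,m}\, \tau_{\indI_\bullet}^m$ in the above spanning set.  Applying the Reynolds averaging operator $\tfrac{1}{|\SS_K|}\sum_{w\in \SS_K} w\cdot(-)$ to $f$ returns $f$ itself by invariance, and regroups the monomials on the right-hand side according to their $\SS_K$-orbits.  Consequently $f$ can be rewritten as a $\QQ$-linear combination of the orbit sums $[\tau_{\indI_\bullet}^m]$.  (One has to be careful only with the fact that distinct pairs $(\indI_\bullet,m)$ may yield the same monomial after cancellations in $\mathcal{J}$, but because the surviving $\tau_{\indJ_1}\cdots \tau_{\indJ_r}$ with $\indJ_1\subsetneq\cdots\subsetneq \indJ_r$ are in bijection with pairs $(\indI_\bullet,m)$, this causes no trouble: grouping by $\SS_K$-orbit is well-defined.)

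To conclude, I invoke Lemma~\ref{lem_invariants}, which identifies the orbit sum $[\tau_{\indI_\bullet}^m]$ with the product $\prod_{i=1}^{\d}[\tau_{\indI_i}]^{m_i}$ of the degree-$2$ orbit sums.  Hence each $\QQ$-linear combination of orbit sums lies in the subalgebra generated by $\{[\tau_\indI]\mid \indI \in \M_{\An}\}$, and $H^\ast(X_{\An})^{\SS_K}$ is generated in degree~$2$ by these elements.

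The only genuinely nontrivial ingredient is Lemma~\ref{lem_invariants}, which has just been proved; the main point I want to verify carefully is the reduction step, namely that every $\SS_K$-invariant element is a combination of orbit sums of the monomials $\tau_{\indI_\bullet}^m$.  This is essentially bookkeeping once one knows that those monomials span $H^\ast(X_{\An})$, so no serious obstacle is expected.
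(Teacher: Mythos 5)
Your argument is correct, but it takes a genuinely different route from the paper's. The paper passes to the $T$-equivariant cohomology ring, where by \cite{BDCP} the chain monomials $\tau_{\indI_\bullet}^m$ form an additive \emph{basis}; uniqueness of the expansion is what forces the coefficients of an invariant element to be constant on $\SS_K$-orbits (implemented there by an inductive ``strip off one orbit sum at a time'' argument), and the result is then pushed down to $H^\ast(X_{\An})$ along the surjective $\SS_K$-equivariant map $H_T^\ast(X_{\An})\to H^\ast(X_{\An})$. You instead work directly in $H^\ast(X_{\An})$, where the chain monomials only \emph{span} --- the linear relations coming from $\mc{I}$ destroy uniqueness --- and you compensate with the Reynolds operator: since averaging over $\SS_K$ is linear, fixes $f$, and sends each monomial $\tau_{\indI_\bullet}^m$ to the positive rational multiple $[\SS_K:\SS_K^{\indI_\bullet}]^{-1}[\tau_{\indI_\bullet}^m]$ of its orbit sum, one concludes without ever needing the expansion to be unique. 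This is more elementary (no equivariant cohomology, no appeal to \cite{BDCP}) and buys exactly the proposition as stated, whereas the paper's detour also establishes the analogous generation statement for $H_T^\ast(X_{\An})^{\SS_K}$. One small remark: your parenthetical worry is aimed at the wrong place --- the failure of the chain monomials to be a basis of $H^\ast(X_{\An})$ is caused by the linear relations in $\mc{I}$, not by identifications inside $\mc{J}$ --- but this is harmless precisely because the Reynolds argument uses only linearity, never well-definedness of the grouping; I would simply delete that caveat or replace it with the observation just made.
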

\begin{proof}
Let $T$ be the dense torus in $X_{\An}$ corresponding to the origin in the normal fan to $P_{\An}$. By \cite[Theorem 8]{BDCP}, the $T$-equivariant cohomology ring of $X_{\An}$ is presented as 
\[
H_T^\ast(X_{\An}) =\QQ\big[\tau_\indI\mid \indI\in \M_{\An} \big]/(\tau_\indI\tau_\indJ\mid \indI\not\subset \indJ \text{ and }  \indJ\not\subset \indI).
\]
Here, we regard $\tau_\indI$ as an element of $H^2_T(X_{\An})$. 
From this explicit presentation, one can see that the monomials $\tau_{\indI_\bullet}^m$ form an additive basis of $H_T^\ast(X_{\An})$. 

Let $x$ be an arbitrary element of $H_T^\ast(X_{\An})^{\SS_K}$. We represent $x$ as a polynomial in $\tau_\indI$'s, say $p$, and let $\tau_{\indI_\bullet}^m$ be a monomial which appears in the polynomial $p$ with a nonzero coefficient, say $c$. 
Since $x$ is $\SS_K$-invariant, any monomial in the $\SS_K$-orbit of $\tau_{\indI_\bullet}^m$ appears in the polynomial $p$. Therefore, the number of monomials which appear in $p-c[\tau_{\indI_\bullet}^m]$ is strictly smaller than that in $p$. Since $p-c[\tau_{\indI_\bullet}^m]$ is again $\SS_K$-invariant, we can apply the same argument to $p-c[\tau_{\indI_\bullet}^m]$ unless it is zero. Repeating this procedure, we finally reach zero and this shows that $x$ is a linear combination of $[\tau_{\indI_\bullet}^m]$'s and hence a polynomial in $[\tau_\indI]$'s by Lemma~\ref{lem_invariants}. Therefore $H_T^\ast(X_{\An})^{\SS_K}$ is generated by $[\tau_\indI]$'s for $\indI \in \M_{\An}$. 
The natural homomorphism $H_T^\ast(X_{\An})\to H^\ast(X_{\An})$ is surjective and $\SS_K$-equivariant. Therefore its restriction $H_T^\ast(X_{\An})^{\SS_K}\to H^\ast(X_{\An})^{\SS_K}$ is also surjective. This fact and the above observation prove the proposition. 
\end{proof}

\section{Toric varieties associated with partitioned permutohedra}\label{sec_part_perm}
\subsection{Partitioned permutohedra}

If $\Phi$ is of type $A_{n-1}$, then for every $k \in [n-1]$, the half-space $\Hk^\leq$ satisfies
\begin{align*} \label{eq_Hk}
H(k)^\leq= \{ x \in E \mid x_k \leq x_{k+1}\}
\end{align*}
and the partitioned permutohedron $P_{\An}(K)$ associated with a subset $K\subset[n-1]$ is defined by
\[
P_{\An}(K)\colonequals P_{\An}\cap \bigcap_{k\in K}H(k)^\leq.
\] 
See Figure \ref{fig_part_perm_A} for several examples of partitioned permutohedra of types $A_2$ and $A_3$. 

\begin{figure}
\begin{tikzpicture}[scale=0.3]

\begin{scope}
\node[dotted, thick, regular polygon, regular polygon sides=6, draw, minimum size=1.8cm](m) at (0,0) {};
\coordinate (123) at (m.corner 1); 
\coordinate (213) at (m.corner 2); 
\coordinate (312) at (m.corner 3); 
\coordinate (321) at (m.corner 4); 
\coordinate (231) at (m.corner 5); 
\coordinate (132) at (m.corner 6); 

\draw[thick, fill=yellow!40] (123)--(132)--(231)--($1/2*(321)+1/2*(231)$)--($1/2*(123)+1/2*(213)$)--cycle;

\draw[dashed, red, thick] ($4/5*(123)+4/5*(213)$)--($4/5*(321)+4/5*(231)$);
\node[below] at ($4/5*(321)+4/5*(231)$) {\small$P_{A_2}(\{1\})$};
\end{scope}

\begin{scope}[xshift=370]
\node[dotted, thick, regular polygon, regular polygon sides=6, draw, minimum size=1.8cm](m) at (0,0) {};
\coordinate (123) at (m.corner 1); 
\coordinate (213) at (m.corner 2); 
\coordinate (312) at (m.corner 3); 
\coordinate (321) at (m.corner 4); 
\coordinate (231) at (m.corner 5); 
\coordinate (132) at (m.corner 6); 

\draw[thick, fill=yellow!40] (123)--(213)--(312)--($1/2*(312)+1/2*(321)$)--($1/2*(123)+1/2*(132)$)--cycle;

\draw[dashed, red, thick] ($4/5*(123)+4/5*(132)$)--($4/5*(312)+4/5*(321)$);
\node[below] at ($4/5*(321)+4/5*(231)$) {\small$P_{A_2}(\{2\})$};
\end{scope}

\begin{scope}[xshift=740]
\node[dotted, thick, regular polygon, regular polygon sides=6, draw, minimum size=1.8cm](m) at (0,0) {};
\coordinate (123) at (m.corner 1); 
\coordinate (213) at (m.corner 2); 
\coordinate (312) at (m.corner 3); 
\coordinate (321) at (m.corner 4); 
\coordinate (231) at (m.corner 5); 
\coordinate (132) at (m.corner 6); 

\draw[thick, fill=yellow!40] (123)--($1/2*(123)+1/2*(132)$)--($1/2*(123)+1/2*(321)$)--($1/2*(213)+1/2*(123)$)--cycle;

\draw[dashed, red, thick] ($4/5*(123)+4/5*(132)$)--($4/5*(312)+4/5*(321)$);
\draw[dashed, red, thick] ($4/5*(123)+4/5*(213)$)--($4/5*(321)+4/5*(231)$);

\node[below] at ($4/5*(321)+4/5*(231)$) {\small$P_{A_2}(\{1,2\})$};
\end{scope}

\begin{scope}[scale=0.45, yshift=-1050, xshift=-300]

\begin{scope}
\coordinate (1234) at (0,10); 
\coordinate (1243) at (2.5,6); 
\coordinate (1342) at (6.5,1.5); 
\coordinate (1432) at (10,0);
\coordinate (1324) at (4,8.5);
\coordinate (1423) at (8.5,4);

\coordinate (2143) at (5,12);
\coordinate (4123) at (14,21);
\coordinate (3124) at (7,20);
\coordinate (2134) at (2.5,16);
\coordinate (3142) at (10.5,12.5);
\coordinate (4132) at (14.5,16.5);

\coordinate (4312) at (21.5,15);
\coordinate (4213) at (17.5,19.5);
\coordinate (2413) at (13,8.5);
\coordinate (2314) at (8.5,13);
\coordinate (3214) at (10.5,18.5);
\coordinate (3412) at (19,9);

\coordinate (2431) at (17,1);
\coordinate (3421) at (21.5,5);
\coordinate (4321) at (24,11);
\coordinate (2341) at (13.5,2.5);
\coordinate (3241) at (14.5,8.5);
\coordinate (4231) at (18.5,12.5);

\draw (1234)--(1243)--(1342)--(1432)--(2431)--(3421)--(4321)--(4312)--(4213)--(4123)--(3124)--(2134)--cycle;
\draw (1234)--(1324)--(1423)--(1432);
\draw (1324)--(2314)--(3214)--(3124);
\draw (3214)--(4213);
\draw (2314)--(2413)--(1423);
\draw (2413)--(3412)--(3421);
\draw (4312)--(3412);

\draw[dotted] (2134)--(2143)--(1243);
\draw[dotted] (1342)--(2341)--(2431);
\draw[dotted] (2143)--(3142)--(4132)--(4123);
\draw[dotted] (4132)--(4231)--(4321);
\draw[dotted] (2341)--(3241)--(3142);
\draw[dotted] (4321)--(4231)--(3241);

\draw[fill=yellow, opacity=0.5] ($0.5*(1234)+0.5*(2134)$)--($0.5*(2314)+0.5*(3214)$)--($0.5*(3412)+0.5*(4312)$)--($0.5*(3421)+0.5*(4321)$)--(3421)--(2431)--(1432)--(1342)--(1243)--(1234)--cycle;
\draw[thick, blue] ($0.5*(1234)+0.5*(2134)$)--($0.5*(2314)+0.5*(3214)$)--($0.5*(3412)+0.5*(4312)$)--($0.5*(3421)+0.5*(4321)$)--(3421)--(2431)--(1432)--(1342)--(1243)--(1234)--cycle;

\draw[thick, blue, dashed] ($0.5*(1234)+0.5*(2134)$)--($0.5*(1243)+0.5*(2143)$)--($0.5*(3241)+0.5*(2341)$)-- ($0.5*(4321)+0.5*(3421)$);
\draw[thick, blue, dashed] ($0.5*(1243)+0.5*(2143)$)--(1243);
\draw[thick, blue, dashed] ($0.5*(3241)+0.5*(2341)$)--(2341)--(1342);
\draw[thick, blue, dashed] ($0.5*(3241)+0.5*(2341)$)--(2341)--(2431);

\draw[thick, blue] ($0.5*(2314)+0.5*(3214)$)--(2314);
\draw[thick, blue] ($0.5*(3412)+0.5*(4312)$)--(3412);
\draw[thick, blue] (1234)--(1324);
\draw[thick, blue] (1324)--(2314)--(2413)--(1423)--cycle;
\draw[thick, blue] (2413)--(3412)--(3421)--(2431)--(1432)--(1423)--cycle;

\node[below] at ($(1432) - (0,1)$) {\small$P_{A_3}(\{1\})$};
\end{scope}

\begin{scope}[xshift=850]
\coordinate (1234) at (0,10); 
\coordinate (1243) at (2.5,6); 
\coordinate (1342) at (6.5,1.5); 
\coordinate (1432) at (10,0);
\coordinate (1324) at (4,8.5);
\coordinate (1423) at (8.5,4);

\coordinate (2143) at (5,12);
\coordinate (4123) at (14,21);
\coordinate (3124) at (7,20);
\coordinate (2134) at (2.5,16);
\coordinate (3142) at (10.5,12.5);
\coordinate (4132) at (14.5,16.5);

\coordinate (4312) at (21.5,15);
\coordinate (4213) at (17.5,19.5);
\coordinate (2413) at (13,8.5);
\coordinate (2314) at (8.5,13);
\coordinate (3214) at (10.5,18.5);
\coordinate (3412) at (19,9);

\coordinate (2431) at (17,1);
\coordinate (3421) at (21.5,5);
\coordinate (4321) at (24,11);
\coordinate (2341) at (13.5,2.5);
\coordinate (3241) at (14.5,8.5);
\coordinate (4231) at (18.5,12.5);

\draw[dotted] (2134)--(2143)--(1243);
\draw[dotted] (1342)--(2341)--(2431);
\draw[dotted] (2143)--(3142)--(4132)--(4123);
\draw[dotted] (4132)--(4231)--(4321);
\draw[dotted] (2341)--(3241)--(3142);
\draw[dotted] (4321)--(4231)--(3241);

\draw[fill=yellow, opacity=0.5] (16.5/2,1.5/2)--(30.5/2,3.5/2)--(18,13.5/2)--(5,14)-- (2.5/2, 13)--(1234)--(1243)--(1342)--cycle;
\draw[thick, blue] (16.5/2,1.5/2)--(30.5/2,3.5/2)--(18,13.5/2)--(5,14)-- (2.5/2, 13)--(1234)--(1243)--(1342)--cycle;

\draw[thick, blue] (5,14)--(2,18.5/2)--(1234);
\draw[thick, blue] (2,18.5/2)--(16.5/2,1.5/2);
\draw[thick, blue, dashed] (2.5/2, 13)--(7.5/2,9)--(1243);
\draw[thick, blue, dashed] (7.5/2,9)--(14, 11/2)--(18,13.5/2);
\draw[thick, blue, dashed] (14, 11/2)--(2341)--(1342);
\draw[thick, blue, dashed] (30.5/2,3.5/2)--(2341);

\draw (1234)--(1243)--(1342)--(1432)--(2431)--(3421)--(4321)--(4312)--(4213)--(4123)--(3124)--(2134)--cycle;
\draw (1234)--(1324)--(1423)--(1432);
\draw (1324)--(2314)--(3214)--(3124);
\draw (3214)--(4213);
\draw (2314)--(2413)--(1423);
\draw (2413)--(3412)--(3421);
\draw (4312)--(3412);

\node[below] at ($(1432) - (0,1)$) {\small$P_{A_3}(\{1,2\})$};

\end{scope}

\begin{scope}[xshift=1700]
\coordinate (1234) at (0,10); 
\coordinate (1243) at (2.5,6); 
\coordinate (1342) at (6.5,1.5); 
\coordinate (1432) at (10,0);
\coordinate (1324) at (4,8.5);
\coordinate (1423) at (8.5,4);

\coordinate (2143) at (5,12);
\coordinate (4123) at (14,21);
\coordinate (3124) at (7,20);
\coordinate (2134) at (2.5,16);
\coordinate (3142) at (10.5,12.5);
\coordinate (4132) at (14.5,16.5);

\coordinate (4312) at (21.5,15);
\coordinate (4213) at (17.5,19.5);
\coordinate (2413) at (13,8.5);
\coordinate (2314) at (8.5,13);
\coordinate (3214) at (10.5,18.5);
\coordinate (3412) at (19,9);

\coordinate (2431) at (17,1);
\coordinate (3421) at (21.5,5);
\coordinate (4321) at (24,11);
\coordinate (2341) at (13.5,2.5);
\coordinate (3241) at (14.5,8.5);
\coordinate (4231) at (18.5,12.5);

\draw (1234)--(1243)--(1342)--(1432)--(2431)--(3421)--(4321)--(4312)--(4213)--(4123)--(3124)--(2134)--cycle;
\draw (1234)--(1324)--(1423)--(1432);
\draw (1324)--(2314)--(3214)--(3124);
\draw (3214)--(4213);
\draw (2314)--(2413)--(1423);
\draw (2413)--(3412)--(3421);
\draw (4312)--(3412);

\draw[dotted] (2134)--(2143)--(1243);
\draw[dotted] (1342)--(2341)--(2431);
\draw[dotted] (2143)--(3142)--(4132)--(4123);
\draw[dotted] (4132)--(4231)--(4321);
\draw[dotted] (2341)--(3241)--(3142);
\draw[dotted] (4321)--(4231)--(3241);

\draw[fill=yellow, opacity=0.5] (1234)--($0.5*(1234)+0.5*(2134)$)--($0.5*(1234)+0.5*(3214)$)--($0.5*(1234)+0.5*(4321)$)--($0.5*(1234)+0.5*(1432)$)--($0.5*(1234)+0.5*(1243)$)--cycle;
\draw[thick, blue] (1234)--($0.5*(1234)+0.5*(2134)$)--($0.5*(1234)+0.5*(3214)$)--($0.5*(1234)+0.5*(4321)$)--($0.5*(1234)+0.5*(1432)$)--($0.5*(1234)+0.5*(1243)$)--cycle;

\draw[thick, blue, dashed] ($0.5*(1234)+0.5*(2134)$)--($0.5*(1234)+0.5*(2143)$)--($0.5*(1234)+0.5*(4321)$);
\draw[thick, blue, dashed] ($0.5*(1234)+0.5*(2143)$)--($0.5*(1234)+0.5*(1243)$);

\draw[thick, blue] (1234)--($0.5*(1234)+0.5*(1324)$)--($0.5*(1234)+0.5*(3214)$);
\draw[thick, blue] ($0.5*(1234)+0.5*(1324)$)--($0.5*(1234)+0.5*(1432)$);

\draw (1234)--(1243)--(1342)--(1432)--(2431)--(3421)--(4321)--(4312)--(4213)--(4123)--(3124)--(2134)--cycle;
\draw (1234)--(1324)--(1423)--(1432);
\draw (1324)--(2314)--(3214)--(3124);
\draw (3214)--(4213);
\draw (2314)--(2413)--(1423);
\draw (2413)--(3412)--(3421);
\draw (4312)--(3412);

\node[below] at ($(1432) - (0,1)$) {\small$P_{A_3}(\{1,2,3\})$};
\end{scope}

\end{scope}

\end{tikzpicture}
\caption{Examples of partitioned permutohedra in type $A_2$ and $A_3$.}
\label{fig_part_perm_A}
\end{figure}
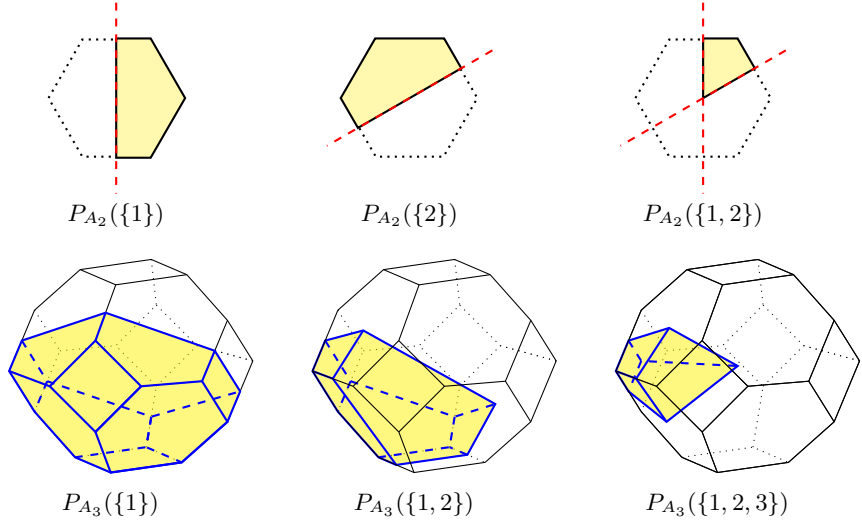

Now we study the face structure of $P_{\An}(K)$. Let $\Hk$ be the boundary of $\Hk^\leq$, which is the hyperplane in $E$ defined by $x_k=x_{k+1}$, and let $\Hk^<$ be the interior of $\Hk^\leq$ in $E$.  We note that facets of $P_{\An}(K)$ are of the form
\begin{equation*}\label{eq_F_K(I)_H_K(k)}
F_K(I) \colonequals F(\indI)\cap P_{\An}(K) ~ \text{or}~ H_K(k)\colonequals \Hk\cap P_{\An}(K).
\end{equation*}

\begin{proposition}\label{prop_A_position_F(I)}
For $k\in [n-1]$ and $I\in \M_{\An}$, the following claims hold.
\begin{enumerate}
\item We have $F(\indI)\cap \Hk\not=\emptyset$ if and only if $\indI$ is $s_k$-invariant, i.e. $\{k,k+1\}$ is contained in either $I$ or $[n]\backslash I$. In this case, $F(\indI)$ is bisected by the hyperplane $\Hk$.
\item We have $F(\indI)\subset \Hk^<$ if and only if $k\in I$ but $k+1\notin I$.  
\end{enumerate}
Therefore, $F(I)\cap H(k)^{\le}\not=\emptyset$ if and only if $k\in I$ whenever $k+1\in I$.  
\end{proposition}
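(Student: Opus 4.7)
The plan is to reduce everything to a direct inspection of the vertices of $F(I)$. Recall that a vertex $(a_{u(1)},\ldots,a_{u(n)})$ of $P_{A_{n-1}}$ lies in $F(I)$ precisely when $\{a_{u(i)}: i\in I\}=\{a_1,\ldots,a_{|I|}\}$, i.e.\ when $u^{-1}([|I|])=I$. Since the hyperplane $H(k)$ is the locus $x_k=x_{k+1}$, and since $a_1<\cdots<a_n$, for such a vertex $v$ the sign of the $k$-th coordinate minus the $(k+1)$-st coordinate is determined entirely by the comparison of $u(k)$ with $u(k+1)$, which in turn depends only on membership of $k,k+1$ in $I$.

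I would then carry out the case analysis as the main step. If $k\in I$ and $k+1\notin I$, then for every vertex $v$ of $F(I)$ one has $u(k)\in[|I|]$ and $u(k+1)\in[n]\setminus[|I|]$, so $u(k)<u(k+1)$, giving $x_k<x_{k+1}$ at $v$. Since this strict inequality holds at every vertex of $F(I)$, by convexity the whole facet satisfies $F(I)\subset H(k)^<$; this yields the ``only if'' direction of~(2), and simultaneously shows $F(I)\cap H(k)=\emptyset$ in this case. The ``if'' direction of~(2) is immediate from the same calculation (the existence of one vertex with $x_k<x_{k+1}$ forces $k\in I$, $k+1\notin I$). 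By the symmetric argument, if $k\notin I$ and $k+1\in I$ then $F(I)\subset H(k)^>$, so again $F(I)\cap H(k)=\emptyset$. This handles the ``only if'' direction of~(1).

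For the ``if'' direction of~(1), suppose $I$ is $s_k$-invariant. The action of $s_k$ on $\RR^n$ swaps the $k$-th and $(k+1)$-st coordinates and its fixed hyperplane is exactly $H(k)$. Because $s_k\cdot F(I)=F(s_kI)=F(I)$, the reflection $s_k$ preserves $F(I)$ as a set. For any vertex $v$ of $F(I)$ the midpoint $\tfrac12(v+s_kv)$ lies in $F(I)\cap H(k)$, so the intersection is nonempty; moreover $s_k$ acting on the affine span of $F(I)$ is either trivial (impossible, as one easily checks by exhibiting a vertex $v\neq s_kv$ when $I\neq\emptyset,[n]$) or a reflection, so $H(k)$ bisects $F(I)$.

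Finally, the last assertion follows by collecting the three cases: $F(I)\cap H(k)^{\le}=\emptyset$ happens exactly when $F(I)\subset H(k)^>$, which by~(2) applied to the symmetric case is equivalent to $k\notin I$ and $k+1\in I$. The anticipated obstacle is not conceptual but notational: one must be careful that ``every vertex of $F(I)$ satisfies a strict linear inequality $\Rightarrow$ the whole facet does,'' which is a one-line convexity argument but needs to be stated. No new tools beyond the definitions in Section~\ref{subsec_permutohedron} are required.
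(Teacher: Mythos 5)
Your overall strategy is sound and close to the paper's: both arguments rest on the explicit coordinate description of the vertices of $F(I)$ (a vertex $(a_{u(1)},\ldots,a_{u(n)})$ lies on $F(I)$ iff $u(I)=[|I|]$) together with the $s_k$-symmetry of the permutohedron. Your treatment of the ``if'' direction of (1) (the midpoints $\tfrac12(v+s_kv)$, and the restriction of $s_k$ to the affine span of $F(I)$ being a nontrivial reflection) matches the paper's, and the convexity remark you flag at the end is exactly the right one-line argument for promoting a strict inequality from vertices to the whole facet.

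There is, however, one justification that is false as stated. You write that ``the existence of one vertex with $x_k<x_{k+1}$ forces $k\in I$, $k+1\notin I$.'' It does not: if $I$ is $s_k$-invariant, say $I=\{k,k+1\}$, the vertex with $u(k)=1$, $u(k+1)=2$ satisfies $x_k=a_1<a_2=x_{k+1}$ even though $k+1\in I$. What the converse of (2) actually requires is that $F(I)\subset H(k)^{<}$ forces \emph{every} vertex to satisfy $x_k<x_{k+1}$, and that in each of the other two cases of your trichotomy there is a vertex violating this: when $k\notin I$ and $k+1\in I$ every vertex has $x_k>x_{k+1}$, and when $I$ is $s_k$-invariant the $s_k$-image of any vertex with $x_k<x_{k+1}$ is a vertex of $F(I)$ with $x_k>x_{k+1}$. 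This is a one-line repair using material already in your proof, and it is essentially how the paper argues (it invokes part (1) to conclude that exactly one of $k,k+1$ lies in $I$, then excludes the case $k\notin I$, $k+1\in I$ by the mirror computation). Apart from this, and a harmless swap of the labels ``if''/``only if'' in (2), the proof is correct.
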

\begin{proof}
(1) We observe first that the facet $F(\indI)$ is either bisected by $\Hk$ or has empty intersection with $\Hk$. Indeed, since the permutohedron $P_{\An}$ is $s_k$-invariant, the reflection through the hyperplane $\Hk$ either  preserves $F(\indI)$ or sends $F(\indI)$ to another facet $F(s_k(I))$. In the latter case, $F(\indI)\cap F(s_k(I))=\emptyset$, since it follows from  the definition of $F(\indI)$ in \eqref{eq_A_facet_convex_hull} that the two facets have no common vertex. As $\indI$ is $s_k$-invariant if and only if $F(\indI)$ is invariant under the reflection through the hyperplane $\Hk$ if and only if $F(\indI)$ is bisected by $\Hk$, statement (1) holds. 

(2) If $k\in I$ but $k+1\not\in I$, then it follows from the definition of $F(\indI)$ in \eqref{eq_A_facet_convex_hull} that any vertex $(x_1, \dots, x_{n})$ of $F(\indI)$ satisfies $x_k \leq a_{|\indI|}<x_{k+1}$, which implies that $F(\indI)$ is contained in $\Hk^<$.  Conversely, if $F(\indI)$ is contained in $\Hk^<$, then exactly one of $k$ and $k+1$ is contained in $\indI$ by (1).  If $k \notin \indI$ but $k+1 \in \indI$, then the same argument as above shows that  $x_{k+1}\leq a_{|I|} < x_k$ for every vertex $(x_1, \dots, x_{n})$ of $F(\indI)$, which implies that $F(\indI)$ is contained in the open half-space opposite to $\Hk^<$.  Therefore, $k\in I$ but $k+1\notin I$. 
\end{proof}

Motivated by Proposition~\ref{prop_A_position_F(I)}, we make the following definition. 

\begin{definition} \label{def_A_K-lower}
Let $K$ be a subset of $[n-1]$ and $[n] = N_1 \sqcup \cdots \sqcup N_s$ the $\SS_K$-orbit decomposition of $[n]$.  Given $i \in [s]$, we call a subset $M$ of $N_i$ a {\it lower subset} if $k \in M$ whenever $k \in N_i$ and $k+1 \in M$.  We define
\[
\M_{\An}(K)\colonequals \{I\in \M_{\An}\mid \text{$I \cap N_i$ is a lower subset of $N_i$ for all $1 \leq i \leq s$}\}.
\]
\end{definition}

For instance, consider the root system of type $A_2$, take $K=\{1\}$, and consider the corresponding parabolic subgroup $\SS_{\{1\}}$ of $\SS_3$. Then, $\SS_{\{1\}}$-orbit decomposition of $[3]$ is given by $\{1,2\} \sqcup \{3\}$ and $\M_{A_2}(K)$ consists of $\{1\}, \{3\},\{1,2\}$ and $\{1,3\}$.

\begin{corollary}\label{cor_A_facet_characterization_of_Part_perm}
For $\indI\in \M_{\An}$ and $k\in [n-1]$, we have 
\begin{enumerate}
\item $H_K(k)$ is a facet of $P_{\An}(K)$ if and only if $k\in K$, 
\item $F_K(\indI)$  is a facet of $P_{\An}(K)$ if and only if $\indI\in \M_{\An}(K)$.
\end{enumerate}
Therefore, the set of facets of $P_{\An}(K)$ corresponds bijectively to $\M_{\An}(K)\cup K$. 
\end{corollary}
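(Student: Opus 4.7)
The plan is to deduce the corollary from Proposition~\ref{prop_A_position_F(I)} via a dimension count. Every facet of $P_{\An}(K)$ must lie on one of its defining hyperplanes, either the supporting hyperplane of a facet $F(I)$ of $P_{\An}$ or a wall $H(k)$ for $k\in K$, so facets of $P_{\An}(K)$ take the form $F_K(I)$ or $H_K(k)$. The task reduces to deciding when each is nonempty and of dimension $n-2=\dim P_{\An}(K)-1$. I first verify that $P_{\An}(K)$ is $(n-1)$-dimensional: the centroid of the $\SS_n$-orbit of $\mathbf{a}$ is $\SS_n$-fixed in $E$, so it equals the origin, which then lies in the relative interior of $P_{\An}$. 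Any point $\varepsilon p$ with $p$ in the open fundamental chamber $\{x\in E : x_k < x_{k+1}\text{ for all }k\in K\}$ of $\SS_K$ and $\varepsilon>0$ small is then in the interior of $P_{\An}(K)$.

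For part~(2), I apply Proposition~\ref{prop_A_position_F(I)} to each $k\in K$: exactly one of (a) $I$ is $s_k$-invariant and $F(I)$ is bisected by $H(k)$; (b) $k\in I$ and $k+1\notin I$, so $F(I)\subset H(k)^<\subset H(k)^\leq$; or (c) $k\notin I$ and $k+1\in I$, forcing $F(I)\cap H(k)^\leq=\emptyset$, occurs. The condition $I\in \M_{\An}(K)$ says precisely that case~(c) never happens for any $k\in K$, so $F_K(I)\neq\emptyset$ iff $I\in \M_{\An}(K)$. When this holds, $F_K(I)$ is obtained from the $(n-2)$-dimensional facet $F(I)$ by intersecting with half-spaces each of which either contains $F(I)$ or bisects it, so the intersection retains dimension $n-2$, making $F_K(I)$ a facet of $P_{\An}(K)$.

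For part~(1), if $k\in K$ a variant of the interior argument above produces a point near the origin that lies on $H(k)$ and strictly satisfies $x_j<x_{j+1}$ for all $j\in K\setminus\{k\}$; such a point is in the relative interior of $H_K(k)$, certifying $\dim H_K(k)=n-2$. If $k\notin K$, no constraint of $P_{\An}(K)$ restricts the sign of $x_{k+1}-x_k$, so $H(k)$ meets the interior of $\bigcap_{j\in K}H(j)^\leq$ and hence, near the origin, the interior of $P_{\An}(K)$; thus $H_K(k)$ contains interior points of $P_{\An}(K)$ and is not a face. The final bijection then follows because the supporting hyperplanes of the $F_K(I)$ (inward normal $e_I$) and $H_K(k)$ (normal proportional to $\alpha_k^\v$) are pairwise distinct for $I\in \M_{\An}(K)$ and $k\in K$. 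The main obstacle I anticipate is the dimension claim in part~(2): one needs to verify that successive intersection of $F(I)$ with the bisecting half-spaces still has full $(n-2)$-dimensional interior in $F(I)$, a transversality fact that can be pinned down either by explicitly exhibiting a vertex of $F(I)$ whose one-line notation is monotone increasing along each $\SS_K$-orbit of $[n]$, or by an interior argument inside the fundamental chamber of the parabolic subgroup stabilizing $I$ setwise.
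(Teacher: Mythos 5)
Your proof is correct and takes essentially the same route as the paper, whose own proof is just the two-line remark that (1) follows from the definition of $P_{\An}(K)$ and (2) from Proposition~\ref{prop_A_position_F(I)}. You are in fact more careful than the paper about the dimension count, and the first method you sketch for the transversality step does work: the permutation $u$ sending $I$ order-preservingly onto $[|I|]$ and $[n]\setminus I$ order-preservingly onto $[n]\setminus[|I|]$ gives a vertex of $F(I)$ satisfying $x_k\le x_{k+1}$ strictly off the $s_k$-invariant walls, so $F_K(I)$ contains a full $(n-2)$-dimensional neighborhood of that vertex in $F(I)$.
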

\begin{proof}
Statement (1) follows quickly from the definition of $P_{\An}(K)$ 
and statement (2) follows from Proposition~\ref{prop_A_position_F(I)}. 
\end{proof}

\subsection{Flagness of $P_{\An}(K)$}\label{subsec_flagness}
Recall that a codimension-$\d$ face of $P_{\An}$ is represented by a nested sequence 
\[
I_\bullet=(I_1\subsetneq I_2\subsetneq \cdots\subsetneq I_d)
\]
of elements $I_1,\dots,I_d$ in $\M_{\An}$ as in \eqref{eq_Anested_seq}. We denote by $F(I_\bullet)$ and $C(I_\bullet)$ the face corresponding to $I_\bullet$ and its barycenter respectively. 
\begin{lemma}\label{lem_A_barycenter}
\begin{enumerate}
\item \label{lem_barycenter_1}$C(I_\bullet) \in  \Hk$ if $I_\bullet$ is $s_k$-invariant.
\item \label{lem_barycenter_2}$C(I_\bullet) \in P_{\An}(K)$ if $I_j\in \M_{\An}(K)$ for each $1\leq j \leq \d$. 
\end{enumerate}
\end{lemma}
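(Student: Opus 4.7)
I would handle (1) by a symmetry argument and (2) by a short case analysis that translates the lower-subset condition into the trichotomy already encoded in Proposition~\ref{prop_A_position_F(I)}.

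For (1), ``$I_\bullet$ is $s_k$-invariant'' means each $I_j$ is setwise fixed by $s_k$. Proposition~\ref{prop_A_position_F(I)}(1) then gives that each facet $F(I_j)$ is preserved by the reflection through $\Hk$, so the face $F(I_\bullet)=\bigcap_j F(I_j)$ is itself $s_k$-invariant. Since $s_k$ is an isometry of $E$ that permutes the vertices of $F(I_\bullet)$, it fixes the centroid of those vertices; that is, $s_k C(I_\bullet)=C(I_\bullet)$. As the fixed set of $s_k$ is exactly $\Hk$, we obtain $C(I_\bullet)\in \Hk$.

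For (2), containment in $P_{\An}$ is automatic from $C(I_\bullet)\in F(I_\bullet)\subset P_{\An}$, so it is enough to verify $C(I_\bullet)\in \Hk^\leq$ for each $k\in K$. Fix such a $k$. Because $s_k\in \SS_K$ swaps $k$ and $k+1$, these two elements lie in a common $\SS_K$-orbit $N_i$, and the hypothesis that $I_j\cap N_i$ is a lower subset of $N_i$ forces the implication ``$k+1\in I_j\Rightarrow k\in I_j$'' for every $j$. Hence each $I_j$ falls into exactly one of three configurations: $\{k,k+1\}\subset I_j$, $\{k,k+1\}\cap I_j=\emptyset$, or $k\in I_j$ together with $k+1\notin I_j$. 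In the last case, Proposition~\ref{prop_A_position_F(I)}(2) gives $F(I_j)\subset \Hk^<$, whence $C(I_\bullet)\in F(I_\bullet)\subset \Hk^<\subset \Hk^\leq$. If instead every $I_j$ lies in one of the first two configurations, then $I_\bullet$ is $s_k$-invariant and part (1) places $C(I_\bullet)$ on $\Hk\subset \Hk^\leq$.

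The only subtlety worth flagging is recognizing that, for $k\in K$, the elements $k$ and $k+1$ necessarily share an $\SS_K$-orbit, which is exactly what lets the lower-subset condition eliminate the ``bad'' configuration $k\notin I_j$, $k+1\in I_j$. Once this translation is in place, both parts reduce to a direct application of Proposition~\ref{prop_A_position_F(I)} together with the elementary fact that isometries preserve barycenters of finite vertex sets.
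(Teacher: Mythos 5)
Your proof is correct and follows essentially the same route as the paper's: part (1) via $s_k$-invariance of the face and the fact that an isometry preserving a face fixes its barycenter, and part (2) via the dichotomy "either $I_\bullet$ is $s_k$-invariant (so $C(I_\bullet)\in \Hk$ by (1)) or some $I_j$ has $k\in I_j$, $k+1\notin I_j$ by the lower-subset condition (so $F(I_\bullet)\subset \Hk^<$ by Proposition~\ref{prop_A_position_F(I)}-(2))." Your added remarks — that $k,k+1$ lie in a common $\SS_K$-orbit when $k\in K$, and the explicit fixed-point argument for the barycenter — only make explicit steps the paper leaves implicit.
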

\begin{proof}
(1) If $I_\bullet$ is $s_k$-invariant, then each facet $F(I_j)$ of $P_{\An}$ is invariant under the reflection through the hyperplane $\Hk$. This implies statement (1). 

(2) Suppose that $I_j\in \M_{\An}(K)$ for each $1\le j\le d$.  Then $F_K(I_j)$ is a facet of $P_{\An}(K)$ by Corollary~\ref{cor_A_facet_characterization_of_Part_perm}-(2). 
Let $k\in K$.  If $I_\bullet$ is $s_k$-invariant, then 
\begin{equation*} 
C(I_\bullet) \in \Hk
\end{equation*}
by (1) above.  If $I_\bullet$ is not $s_k$-invariant, then some $I_j$ is not $s_k$-invariant. Hence $k\in I_j$ but $k+1\notin I_j$ since $I_j\in \M_{\An}(K)$, which implies $F(I_j)\subset H(k)^{<}$ by Proposition~\ref{prop_A_position_F(I)}-(2). It follows that 
\begin{equation*} \label{eq:C(Idot)_2}
 C(I_\bullet) \in F(I_\bullet)=\bigcap_{j=1}^d F(\indI_j) \subset \Hk^<. 
\end{equation*}
In any case $C(I_\bullet)\in H(k)^\leq$ for $k\in K$. Hence we get
\[
C(I_\bullet) \in F(I_\bullet) \cap \bigcap_{k\in K}\Hk^\leq  \subset P_{\An}\cap \bigcap_{k\in K}\Hk^\leq=P_{\An}(K),
\]
proving the statement (2). 
\end{proof}

\begin{proposition} \label{prop_A_flagness_part_w_poly}
For any subset $K\subset [n-1]$, the associated partitioned permutohedron $P_{\An}(K)$ is a flag polytope. 
\end{proposition}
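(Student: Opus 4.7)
The plan is to verify the standard criterion for a simple polytope to be flag: every collection of pairwise intersecting facets has nonempty common intersection. By Corollary \ref{cor_A_facet_characterization_of_Part_perm}, any such collection of facets of $P_{\An}(K)$ can be uniquely written as
\[
\mathcal{G}=\{F_K(I_1),\ldots,F_K(I_d)\}\cup\{H_K(k_1),\ldots,H_K(k_e)\}
\]
with $I_j\in\M_{\An}(K)$ and distinct $k_l\in K$, and I intend to exhibit a common point of $\mathcal{G}$ under the pairwise intersection hypothesis.

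The key step is to convert pairwise intersection into the combinatorial hypotheses of Lemma \ref{lem_A_barycenter}. Since $F_K(I_j)\cap F_K(I_{j'})\subseteq F(I_j)\cap F(I_{j'})$, Lemma \ref{lem_A_facets_intersecting} forces the $I_j$'s to be totally ordered by inclusion, yielding a nested sequence $I_\bullet=(I_1\subsetneq\cdots\subsetneq I_d)$ after reordering. Likewise $F_K(I_j)\cap H_K(k_l)\subseteq F(I_j)\cap H(k_l)$, so Proposition \ref{prop_A_position_F(I)}(1) forces every $I_j$ (hence $I_\bullet$ itself) to be $s_{k_l}$-invariant for every $l$. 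When $d\ge 1$, Lemma \ref{lem_A_barycenter} then provides the barycenter
\[
C(I_\bullet)\in P_{\An}(K)\cap F(I_\bullet)\cap\bigcap_{l}H(k_l)=\bigcap_{j}F_K(I_j)\cap\bigcap_{l}H_K(k_l),
\]
which is the required common point.

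The degenerate case $d=0$ is not covered by Lemma \ref{lem_A_barycenter} and needs a separate direct construction: set $G:=\langle s_{k_l}:1\le l\le e\rangle\le\SS_n$ and
\[
p:=\tfrac{1}{|G|}\sum_{g\in G}g\cdot(a_1,\ldots,a_n)\in P_{\An}.
\]
As $p$ is $G$-fixed, $p\in\bigcap_{l}H(k_l)$; moreover the $G$-orbits on $[n]$ are consecutive intervals along which $p$ is constant. For $k\in K$, either $\{k,k+1\}$ lies in a single orbit (so $p_k=p_{k+1}$), or $k\notin\{k_l\}$, in which case the orbit of $k$ entirely precedes that of $k+1$ and the strict ordering $a_1<\cdots<a_n$ gives $p_k<p_{k+1}$. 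Either way $p\in H(k)^\le$ for every $k\in K$, so $p\in\bigcap_l H_K(k_l)$. The substantive obstacle is the bookkeeping in the middle paragraph, where one must carefully match the pairwise-intersection data to the input required by Lemma \ref{lem_A_barycenter}; the $d=0$ case is conceptually separate but should be a routine $G$-averaging argument.
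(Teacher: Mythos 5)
Your proposal is correct and follows essentially the same route as the paper's proof: reduce the pairwise-intersecting family to a nested sequence $I_\bullet$ via Lemma \ref{lem_A_facets_intersecting}, extract $s_{k_l}$-invariance of each $I_j$ from Proposition \ref{prop_A_position_F(I)}(1), and then place the barycenter $C(I_\bullet)$ in the full intersection using Lemma \ref{lem_A_barycenter}. The only cosmetic difference is in the $d=0$ case, where the paper simply observes that the barycenter of $P_{\An}$ (the $\SS_n$-average, i.e.\ the origin) lies on every $H(k)$, whereas you average only over the subgroup generated by the relevant $s_{k_l}$; both are routine and valid.
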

\begin{proof}
We must show that, assuming each pair from a set 
\begin{equation*}\label{eq_A_pairwise_intersection}
\{F_K(I_1), \dots, F_K(I_p), H_K(k_1), \dots, H_K(k_q)\}
\end{equation*}
of facets of $P_{\An}(K)$ has nonempty intersection, we have 
\begin{equation}\label{eq_A_claim_for_flagness}
F_K(I_1)\cap  \cdots\cap  F_K(I_p)\cap  H_K(k_1)\cap  \cdots\cap  H_K(k_q)\neq \emptyset.
\end{equation}
This is obvious when $p=0$ because every $H_K(k)$ contains the barycenter of $P_{\An}$.  We will prove \eqref{eq_A_claim_for_flagness} when $p\ge 1$ as follows. 

Since $F_K(I_i) \cap F_K(I_j) \neq \emptyset$ for all $i,j \in [p]$ and $F_K(I)=F(I) \cap P_{\An}(K)$ for all $I\in \M_{\An}$, we see that $F(I_i) \cap F(I_j) \neq \emptyset$ for all $i,j \in [p]$.  As $P_{\An}$ is flag, we may permute indices to obtain a nested sequence
\[
I_\bullet \colonequals \left(I_1\subsetneq I_2 \subsetneq\cdots \subsetneq I_p\right)
\]
by Lemma~\ref{lem_A_facets_intersecting}. We show that $C(I_\bullet)$ lies in the intersection \eqref{eq_A_claim_for_flagness}. 

First, we claim that $C(I_\bullet) \in F_K(I_1)\cap  \cdots\cap  F_K(I_p)$. Indeed, since $F_K(I_j)$ is a facet of $P_{\An}(K)$ for $1\le j\le p$, we have $I_j\in \M_{\An}(K)$ by Corollary~ \ref{cor_A_facet_characterization_of_Part_perm}-(2) and hence $C(I_\bullet)\in P_{\An}(K)$ by Lemma \ref{lem_A_barycenter}-\eqref{lem_barycenter_2}.  It follows that 
\begin{align*}
C(I_\bullet)\in F(I_\bullet)\cap P_{\An}(K)=\bigcap_{i=1}^p F(I_i) \cap P_{\An}(K)
=  \bigcap_{i=1}^p F_K(I_i).
\end{align*}

Next, we claim that $C(I_\bullet) \in H_K(k_1)\cap  \cdots\cap  H_K(k_q).$ By assumption we have 
$$\emptyset \neq F_K(I_i) \cap H_K(k_j) \subset F(I_i) \cap H(k_j)$$
for any $1\leq i\leq p$ and $1\leq j \leq q$. 
Therefore, each $I_i$ is $s_{k_j}$-invariant for any $1\le j\le q$ by Proposition~\ref{prop_A_position_F(I)}-(1) and hence $I_\bullet$ is $s_{k_j}$-invariant whenever $1\le j\le q$.  Since $C(I_{\bullet})\in P_{\An}(K)$ as observed above, it follows from Lemma~\ref{lem_A_barycenter}-\eqref{lem_barycenter_1} that  
$$C(I_\bullet)\in  P_{\An}(K) \cap \bigcap_{j=1}^q \H(k_j)=\bigcap_{j=1}^q H_K(k_j).$$
Two claims above prove the desired \eqref{eq_A_claim_for_flagness}.  
\end{proof}

There are two types of facets, $F_K(I)$ and $H_K(k)$, in $P_{\An}(K)$.  We know that  $H_K(k)\cap H_K(k')\neq \emptyset $ for arbitrary $k, k'\in K$ since the barycenter of $P_{\An}$ is contained in $H_K(k)$ for all $k\in K$. The next proposition tells us which pairs of facets of $P_{\An}(K)$ have empty intersection. 

\begin{proposition} \label{prop_A_intersection_facets_PK}
Let $\FlA, \FlB$ and $H_K(k)$ be facets of $P_{\An}(K)$, so that $\indI,\indJ\in\M_{\An}(K)$ and $k\in K$.  Then the following statements hold:
\begin{enumerate}
\item $\FlA\cap \FlB=\emptyset$ if and only if $I\not\subset J$ and $J\not\subset I$.
\item $\FlA\cap H_K(k)=\emptyset$ if and only if $\indI$ is not $s_k$-invariant.
\end{enumerate} 
\end{proposition}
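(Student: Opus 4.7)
The plan is to reduce both parts of the proposition to facts already proved about facets of $P_{\An}$ (Lemma~\ref{lem_A_facets_intersecting} and Proposition~\ref{prop_A_position_F(I)}), and then, for the nonempty directions, use the barycenter construction (Lemma~\ref{lem_A_barycenter}) to exhibit an explicit point in the intersection, exactly as was done in the proof of flagness (Proposition~\ref{prop_A_flagness_part_w_poly}). The whole argument should be short because all the real work has been done.

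For part (1), the easy direction is: if $I\not\subset J$ and $J\not\subset I$, then $F(I)\cap F(J)=\emptyset$ by Lemma~\ref{lem_A_facets_intersecting}, and so $F_K(I)\cap F_K(J)=F(I)\cap F(J)\cap P_{\An}(K)=\emptyset$. For the converse, assume (without loss of generality) $I\subsetneq J$, with $I,J\in\M_{\An}(K)$ since $F_K(I),F_K(J)$ are facets. Take the nested sequence $I_\bullet=(I\subsetneq J)$ and the barycenter $C(I_\bullet)\in F(I)\cap F(J)$. By Lemma~\ref{lem_A_barycenter}-\eqref{lem_barycenter_2}, $C(I_\bullet)\in P_{\An}(K)$, so $C(I_\bullet)\in F_K(I)\cap F_K(J)$ and this intersection is nonempty. (The case $I=J$ is trivial.)

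For part (2), if $I$ is not $s_k$-invariant, then $F(I)\cap H(k)=\emptyset$ by Proposition~\ref{prop_A_position_F(I)}-(1), and intersecting with $P_{\An}(K)$ gives $F_K(I)\cap H_K(k)=\emptyset$. Conversely, if $I$ is $s_k$-invariant, apply Lemma~\ref{lem_A_barycenter} to the trivial nested sequence $I_\bullet=(I)$: the hypothesis of \eqref{lem_barycenter_1} gives $C(I)\in H(k)$, and the hypothesis of \eqref{lem_barycenter_2} (which holds because $I\in\M_{\An}(K)$) gives $C(I)\in P_{\An}(K)$. Since $C(I)\in F(I)$ by construction, we get $C(I)\in F_K(I)\cap H_K(k)$.

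No step should present a real obstacle: the content of Lemma~\ref{lem_A_barycenter} is precisely what makes the proof mechanical, and the ``only if'' directions are immediate from $F_K(I)\subset F(I)$ and $H_K(k)\subset H(k)$. If anything is worth flagging, it is simply that one should be careful about the degenerate case $I=J$ in (1), which can be disposed of in one line.
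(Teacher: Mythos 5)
Your proof is correct and follows essentially the same route as the paper: the empty-intersection directions are immediate from Lemma~\ref{lem_A_facets_intersecting} and Proposition~\ref{prop_A_position_F(I)} after intersecting with $P_{\An}(K)$, and the nonempty directions are witnessed by the barycenter $C(I_\bullet)$ via Lemma~\ref{lem_A_barycenter}, exactly as in the paper. The only cosmetic difference is that the paper phrases part (2) as an equivalence $F_K(I)\cap H_K(k)\neq\emptyset \Leftrightarrow F(I)\cap H(k)\neq\emptyset$ before invoking Proposition~\ref{prop_A_position_F(I)}-(1), while you invoke the $s_k$-invariance directly; the content is identical.
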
 

\begin{proof}
(1) Since $\FlA\subset \FA$ and $\FlB\subset \FB$, the \lq\lq if\rq\rq\ part is obvious by Lemma \ref{lem_A_facets_intersecting}. In order to prove the ``only if'' part, we prove 
\[
\FlA\cap \FlB\not=\emptyset\ \text{ if $I\subset J$ or $J\subset I$}.\tag{\text{$*$}}
\]  
If $I=J$, then $(*)$ above trivially holds and we may assume $I\subsetneq J$ without loss of generality.  Then, we have a flag $\indI_\bullet=(\indI\subsetneq \indJ)$ and the barycenter $\CAbullet$ of $\FAbullet$ is in $\FAbullet\cap P_{\An}(K)$ by Lemma \ref{lem_A_barycenter}-(2).  This proves $(*)$ above since $\FAbullet\cap P_{\An}(K)=\FlA\cap \FlB$, thereby proving the ``only if'' part.

(2) Statement (2) is equivalent to the following:
\[
\text{$\FlA\cap H_K(k)\not=\emptyset$ if and only if $\indI$ is $s_k$-invariant.}
\]
Furthermore, $I$ is $s_k$-invariant if and only if $F(I)\cap H(k)\not=\emptyset$ by Proposition~\ref{prop_A_position_F(I)}-(1).  Therefore, it suffices to prove the following:
\[
\text{$\FlA\cap H_K(k)\not=\emptyset$ if and only if $F(I)\cap H(k)\not=\emptyset$.}\tag{\text{$**$}}
\]
The \lq\lq only if\rq\rq\ part in $(**)$ above is trivial because 
\begin{equation} \label{eq:**}
F_K(I)\cap H_K(k)=F(I)\cap H(k)\cap P_{\An}(K).
\end{equation}
The \lq\lq if\rq\rq\ part in $(**)$ above can be proved as follows.  If $F(I)\cap H(k)\not=\emptyset$, then the barycenter $C(I)$ of $F(I)$ is in $F(I)\cap H(k)$.  Since $H_K(k)$ is a facet of $P_{\An}(K)$, we have $k\in K$ by Corollary~\ref{cor_A_facet_characterization_of_Part_perm}-(1) and hence $C(I)$ is also in $P_{\An}(K)$ by Lemma~\ref{lem_A_barycenter}-(1). This together with \eqref{eq:**} shows $C(I)\in F_K(I)\cap H_K(k)$, proving the \lq\lq if\rq\rq\ part in $(**)$ above.   
\end{proof}

\subsection{Cohomology of the toric variety associated with $P_{\An}(K)$} \label{subsec_A_toric_var_part_w_poly}

The primitive inward normal vector of the facet $\FlA$ agrees with that of $\FA$. On the other hand, an (inward) normal vector of $H_K(k)$ is given by 
$$-\alpha^{\v}_k=-e_k+e_{k+1}$$ 
because $H_K(k)=P_{\An}(K)\cap \{x\in E \mid \alpha^{\v}_k(x)=0\}$ and $P_{\An}(K)$ is contained in the half-space $H(k)^{\le}=\{x\in E\mid \alpha_k^{\v}(x)\le 0\}$. 

Let $X_{\An}(K)$ be the toric variety associated with a partitioned permutohedron $P_{\An}(K)$. Since $P_{\An}$ is a simple polytope, so is $P_{\An}(K)$. Hence, $X_{\An}(K)$ is a toric orbifold as described in Section \ref{sec_toric_orb}. Now, applying Theorem~\ref{theorem_cohom_toric_var} together with Propositions~\ref{prop_A_flagness_part_w_poly} and~\ref{prop_A_intersection_facets_PK} to $X_{\An}(K)$, we obtain the following.
 
\begin{proposition} \label{prop_A_cohomologyXK}
Let $K$ be a subset of $[n-1]$. 
The cohomology ring of the toric orbifold $X_{\An}(K)$ associated with a partitioned permutohedron $P_{\An}(K)$ is given by
\begin{equation}\label{eq_A_cohomologyXK}
H^\ast(X_{\An}(K))\cong \QQ[\tau_\indI, \tau_{s_k} \mid \indI \in \M_{\An}(K), \ k \in K]/\left(\mathcal{I}(K) + \mathcal{J}(K)\right),
\end{equation}
where  $\deg\tau_\indI=\deg\tau_{s_k}=2$; $\mc{I}(K)$ is the ideal generated by 
\begin{enumerate}
\item $\sum_{k\in K} \left< u, -\alpha^{\v}_k \right> \tau_{s_k} + \sum_{\indI\in\M_{\An}(K)} \left< u, e_\indI \right> \tau_\indI$ for $u\in L(\Phi_{\An})$;
\end{enumerate}
and $\mc{J}(K)$ is the ideal generated by
\begin{enumerate}
\item[(2)] $\tau_\indI \tau_\indJ$ for $\indI, \indJ \in\M_{\An}(K)~ \text{such that}~\indI \not\subset \indJ \text{ and } \indJ \not\subset \indI$, and
\item[(3)] $ \tau_\indI \tau_{s_k}$ for $\indI \in\M_{\An}(K),~ k\in K~\text{such that}~\textrm{$\indI$ is not $s_k$-invariant}$.
\end{enumerate}
Here, $\left< \ , \ \right>$ denotes the natural pairing between $L(\Phi_{\An})$ and $\hat{L}(\Phi_{\An}^{\v})$. 
\end{proposition}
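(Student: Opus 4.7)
The plan is to apply Theorem~\ref{theorem_cohom_toric_var} directly to $X_{\An}(K)$, using the structural results already established about $P_{\An}(K)$. Since $P_{\An}$ is a simple rational polytope with respect to the coweight lattice $\hat{L}(\Phi_{\An}^{\v})$, and $P_{\An}(K)$ is obtained by intersecting $P_{\An}$ with rational half-spaces (the $H(k)^\leq$ for $k \in K$), simplicity and rationality pass to $P_{\An}(K)$ (the simplicity claim is implicit in the preceding discussion, where $X_{\An}(K)$ is called a toric orbifold). Thus Theorem~\ref{theorem_cohom_toric_var} applies.

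First, I would enumerate the facets of $P_{\An}(K)$. By Corollary~\ref{cor_A_facet_characterization_of_Part_perm}, these are exactly the $F_K(I)$ for $I \in \M_{\An}(K)$ together with the $H_K(k)$ for $k \in K$, which justifies indexing the generators $\tau_I$ and $\tau_{s_k}$ of the polynomial ring on the right-hand side of \eqref{eq_A_cohomologyXK}. Next, I would record the inward primitive normal vectors of these facets. The normal to $F_K(I)$ is the same as the normal to the ambient facet $F(I)$ of $P_{\An}$, namely $e_I \in \hat{L}(\Phi_{\An}^{\v})$ by \eqref{eq:A_facet_normal}; the normal to $H_K(k)$ is $-\alpha_k^{\v} = -e_k + e_{k+1}$, as explained at the start of Section~\ref{subsec_A_toric_var_part_w_poly}. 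Plugging these into the linear relations of Theorem~\ref{theorem_cohom_toric_var}(1) gives exactly the generators of $\mc{I}(K)$.

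For the Stanley--Reisner-type relations from Theorem~\ref{theorem_cohom_toric_var}(2), I would use the flagness of $P_{\An}(K)$ established in Proposition~\ref{prop_A_flagness_part_w_poly}. By flagness, every collection of facets with empty intersection contains a pair with empty intersection, so the ideal generated by all such monomials is generated by its quadratic elements, i.e.\ by products $\tau_F \tau_{F'}$ for pairs $F, F'$ of facets of $P_{\An}(K)$ with $F \cap F' = \emptyset$. I would then invoke Proposition~\ref{prop_A_intersection_facets_PK} and the observation (just before that proposition) that $H_K(k) \cap H_K(k') \ne \emptyset$ for all $k, k' \in K$: the first clause of Proposition~\ref{prop_A_intersection_facets_PK} produces the relations in $\mc{J}(K)$(2), the second produces $\mc{J}(K)$(3), and the $H$-$H$ remark shows there is no further quadratic relation. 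Assembling these gives precisely the presentation \eqref{eq_A_cohomologyXK}.

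There is no genuine obstacle here; the proposition is a straightforward consolidation of the earlier combinatorial work. The only thing requiring minimal care is checking that the linear relation appears with the correct signs and indexing, and verifying that flagness truly cuts the Stanley--Reisner ideal down to quadratic generators — but both are immediate from the cited statements.
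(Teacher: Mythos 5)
Your proposal is correct and follows exactly the route the paper takes: apply Theorem~\ref{theorem_cohom_toric_var} to the simple rational polytope $P_{\An}(K)$, read off the linear relations from the facet normals $e_I$ and $-\alpha_k^{\v}$ computed at the start of Section~\ref{subsec_A_toric_var_part_w_poly}, and use the flagness of Proposition~\ref{prop_A_flagness_part_w_poly} together with Proposition~\ref{prop_A_intersection_facets_PK} to reduce the Stanley--Reisner ideal to the stated quadratic generators. The paper states this in one sentence; your write-up fills in the same steps with no gaps.
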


\section{Proof of Theorem \ref{main} for type $A$} \label{sec_A_proof_of_main_thm}

In this section we prove Theorem \ref{main} when $\Phi$ is of type $\An$.  Let ${\mathbb K}$ be any field.  A  finite dimensional graded ${\mathbb K}$-algebra $R=\bigoplus_{k=1}^d R_i$ is called a {\it Poincar\'{e} duality algebra} if the bilinear map from $R_i \times R_{d-i}$ to $R_d$ determined by multiplication in $R$ is nondegenerate for $0 \leq i \leq d$.  We observe that if $X$ is a compact, orientable orbifold, then $H^\ast(X;\QQ)$ is a Poincar\'{e} duality $\QQ$-algebra.

\begin{lemma}{\cite[Lemma~10.5]{AHHM}} \label{lem_PDA-implies_isom}
Let $\phi \colon \mathcal{A}=\bigoplus_{i=0}^m A_i \to \mathcal{B}=\bigoplus_{i=0}^m B_i$ be a surjective homomorphism between two graded algebras. If the domain $\mathcal{A}$ is a Poincar\'e duality algebra and $\phi |_{A_m} \colon A_m \to B_m$ is an isomorphism, then $\phi$ is an isomorphism. 
\end{lemma}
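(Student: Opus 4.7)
The plan is to establish injectivity of $\phi$, since surjectivity is given by hypothesis. Because $\phi$ is a graded homomorphism, it suffices to verify that the restriction $\phi|_{A_i}\colon A_i\to B_i$ is injective for every $0\leq i\leq m$. The case $i=m$ is the hypothesis, so I fix $0\leq i<m$ and argue by contradiction: suppose $a\in A_i$ is nonzero while $\phi(a)=0$.

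The core of the proof is a single application of the Poincar\'e duality assumption on $\mathcal{A}$. By nondegeneracy of the pairing $A_i\times A_{m-i}\to A_m$ induced by multiplication, the nonzero element $a$ admits a partner $b\in A_{m-i}$ with $ab\neq 0$ in $A_m$. Since $\phi$ is a graded ring map, we compute $\phi(ab)=\phi(a)\phi(b)=0$ in $B_m$; but $\phi|_{A_m}$ is assumed to be an isomorphism, in particular injective, which forces $ab=0$. This contradicts the choice of $b$, so $\ker\phi\cap A_i=0$ for each $i<m$. Combined with the $i=m$ case, $\phi$ is injective, hence an isomorphism.

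There is really no obstacle here: the Poincar\'e duality hypothesis is used exactly once, to produce a top-degree witness for any nonzero homogeneous element in a lower degree, after which the argument reduces to the top-degree isomorphism assumption. The only things to keep track of are that $\phi$ preserves gradings (already built into the decomposition $\phi\colon\bigoplus A_i\to\bigoplus B_i$) and that the contradiction is derived in $A_m$ via the injectivity half of the top-degree isomorphism.
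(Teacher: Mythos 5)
Your proof is correct and is essentially the standard argument for this lemma (the paper itself cites \cite[Lemma~10.5]{AHHM} rather than reproving it, and the proof there proceeds exactly this way): a nonzero homogeneous kernel element in degree $i<m$ is paired via Poincar\'e duality to a top-degree witness, whose image must vanish, contradicting injectivity of $\phi|_{A_m}$. The one implicit point you rely on, correctly, is that the kernel of a graded map decomposes as the direct sum of its homogeneous pieces, so degreewise injectivity suffices.
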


We aim to apply Lemma \ref{lem_PDA-implies_isom} with ${\mathcal A}=H^\ast(X_{\An}(K))$ and ${\mathcal B}=H^\ast(X_{\An})^{\SS_K}$, where $K$ is an arbitrary subset of $[n-1]$.  To this end, we will define a homomorphism
$$
\varphi:\QQ[\tau_\indI, \tau_{s_k} \mid \indI \in \M_{\An}(K), \ k \in K] \rightarrow H^\ast(X_{\An})
$$
and show that
\begin{itemize}
\item[(A)] the kernel of $\varphi$ contains the ideals $\mathcal{I}(K)$ and $\mathcal{J}(K)$ from (\ref{eq_A_cohomologyXK}), and
\item[(B)] the image of $\varphi$ is $H^\ast(X_{\An})^{\SS_K}$.
\end{itemize}

We consider the $\SS_K$-orbit decomposition 
\[
[n]=N_1 \sqcup \cdots \sqcup N_s
\]
of $[n]$. It is straightforward to see that for $v \in \SS_n$ and $I \in \M_{\An}(K)$, 
\begin{equation}\label{eq_A_coefficientsbeta}
e_\indI - e_{v(\indI)}=\sum_{j=1}^s (e_{\indI \cap N_j}-e_{v(\indI) \cap N_j})=\sum_{k \in K}  c^{\indI,v}_k \alpha^{\v}_k
\end{equation}
for some non-negative integers $c^{\indI,v}_k$. 

\begin{example}
Consider type $A_3$ and $K=\{1,2,3 \}$. \\
(1) If $\indI=\{1\}$ and $v=s_1$, then we have
$$e_\indI-e_{v(\indI)} = e_{\{1\}}-e_{\{2\}}=\alpha^{\v}_1.$$ 
(2) If $\indI=\{1,2\}$ and $v=s_2s_1s_3s_2$, then we have
$$e_\indI-e_{v(\indI)} = e_{\{1,2\}}-e_{\{3,4\}}=(e_1-e_3)+(e_2-e_4)=\alpha^{\v}_1+2\alpha^{\v}_2+\alpha^{\v}_3.$$ 
\end{example}

\begin{lemma} \label{lem_A_coeff_nonzero}
Assume that $k \in N_i$ for some $k \in K$ and some $i \in [s]$.  The coefficient $c^{\indI, v}_k$ in \eqref{eq_A_coefficientsbeta} is zero if
\begin{equation} \label{eq:Lemma4.4-1TypeA}
v(\indI) \cap N_i \subset [k] \cap N_i \ {\rm or} \ v(\indI) \cap N_i \supset [k] \cap N_i
\end{equation}
holds.
\end{lemma}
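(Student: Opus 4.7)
The plan is to derive an explicit closed form for $c_k^{\indI,v}$ in terms of partial sums of the characteristic vectors of $\indI$ and $v(\indI)$, and then verify by a short inequality chase that each of the two alternatives in \eqref{eq:Lemma4.4-1TypeA} forces this coefficient to vanish.

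First I extract the closed form.  In type $A_{n-1}$, any element $\sum_i d_i e_i$ of the root lattice (so $\sum_i d_i = 0$) expands uniquely in the simple coroots as $\sum_{l=1}^{n-1}\bigl(\sum_{j\le l} d_j\bigr)\alpha_l^\v$, obtained by inverting the telescoping relations $\alpha_l^\v = e_l - e_{l+1}$.  Applied to $e_\indI - e_{v(\indI)} = \sum_i (\mathbf{1}_{i\in \indI} - \mathbf{1}_{i\in v(\indI)})\,e_i$, this yields
\[
c_k^{\indI,v} \;=\; |\indI \cap [k]| \,-\, |v(\indI) \cap [k]|.
\]
Since $\SS_K$ is generated by adjacent transpositions, each orbit $N_j$ is an interval $[a_j, b_j]$, so for $k\in N_i\cap K$ one has the partition $[k] = N_1\sqcup\cdots\sqcup N_{i-1}\sqcup[a_i,k]$.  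Because the element $v$ in \eqref{eq_A_coefficientsbeta} preserves each $N_j$, the contributions from indices $j<i$ cancel pairwise, leaving
\[
c_k^{\indI,v} \;=\; |\indI \cap [a_i,k]| \,-\, |v(\indI) \cap [a_i,k]|.
\]

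For the vanishing I use that $\indI\in\M_{\An}(K)$ forces $\indI\cap N_i$ to be an initial segment $[a_i, a_i+t_i-1]$, where $t_i\colonequals |\indI\cap N_i| = |v(\indI)\cap N_i|$.  In particular $|\indI\cap[a_i,k]| = \min(t_i,\; k-a_i+1)$.  If $v(\indI)\cap N_i \subset [a_i,k]$, then $|v(\indI)\cap[a_i,k]| = |v(\indI)\cap N_i| = t_i$ while the inclusion forces $t_i \le k-a_i+1$, so both quantities in $c_k^{\indI,v}$ equal $t_i$ and the coefficient vanishes.  If instead $v(\indI)\cap N_i \supset [a_i,k]$, then $|v(\indI)\cap[a_i,k]| = k-a_i+1$ while the inclusion forces $t_i \ge k-a_i+1$, so both quantities equal $k-a_i+1$.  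In either case $c_k^{\indI,v}=0$.

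The only conceptual step is the first one: recognizing that expanding $e_\indI - e_{v(\indI)}$ in the basis of simple coroots localizes the coefficient of $\alpha_k^\v$ to a count of elements up to index $k$.  Once that localization is in place, the initial-segment form of $\indI\cap N_i$ and the fact that $v$ preserves $|\,\cdot\, \cap N_i|$ reduce the vanishing to an elementary size comparison inside the single orbit $N_i$.
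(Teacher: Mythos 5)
Your proof is correct and follows essentially the same route as the paper's: both arguments localize the coefficient of $\alpha_k^\v$ to the single orbit $N_i$ (using that $v$ preserves each orbit), and both then exploit the initial-segment form of $\indI\cap N_i$ together with $|v(\indI)\cap N_i|=|\indI\cap N_i|$ to conclude vanishing in each of the two cases. The only difference is presentational: you make the partial-sum formula $c_k^{\indI,v}=|\indI\cap[k]|-|v(\indI)\cap[k]|$ explicit, whereas the paper phrases the same fact as "$\alpha_k^\v$ does not appear" in $e_{\indI\cap N_i}-e_{v(\indI)\cap N_i}$.
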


\begin{proof}
The first equality of \eqref{eq_A_coefficientsbeta} implies that  $\alpha^{\v}_k=e_k-e_{k+1}$ does not appear in $e_{\indI \cap N_j}-e_{v(\indI) \cap N_j}$ unless $j=i$. Therefore, it is enough to prove that $\alpha_k^{\v}$ does not appear in $e_{\indI\cap N_i}-e_{v(\indI)\cap N_i}$ if $v(\indI)$ satisfies the condition \eqref{eq:Lemma4.4-1TypeA}.

Since $\indI\in \M_{\An}(K)$, we have 
\begin{align*}
\indI \cap N_i \subset [k] \cap N_i \ {\rm or} \ \indI \cap N_i \supset [k] \cap N_i. 
\end{align*}
We note that $|v(I)\cap N_i|=|I\cap N_i|$ since $v$ preserves $N_i$.  Therefore, under the condition \eqref{eq:Lemma4.4-1TypeA}, we have either
\begin{enumerate}
\item[$\bullet$] $v(\indI) \cap N_i \subset [k] \cap N_i$ and $\indI \cap N_i \subset [k] \cap N_i$ \ {\rm or} 
\item[$\bullet$] $v(\indI) \cap N_i \supset [k] \cap N_i$ and $ \indI \cap N_i \supset [k] \cap N_i$.
\end{enumerate}
In the former case, both $\indI\cap N_i$ and $v(\indI)\cap N_i$ are contained in $[k]$ while in the latter case, both $\indI\cap N_i$ and $v(\indI)\cap N_i$ contain $[k]$. Therefore, $\alpha^{\v}_k$ does not appear in $e_{\indI \cap N_i}-e_{v(\indI) \cap N_i}$ in either case. 
\end{proof}

We define a graded ring homomorphism 
\begin{equation*}
\varphi \colon \QQ[\tau_\indI, \tau_{s_k} \mid \indI \in \M_{\An}(K), \ k \in K] \to H^\ast(X_{\An})
\end{equation*}
by setting 
\begin{align} \label{eq_def_varphi}
\begin{split}
\varphi (\tau_\indI) &= [\tau_\indI]=\sum_{v\in \SS_K/\SS_K^\indI} \tau_{v(\indI)}; \\
\varphi(\tau_{s_k})&=\sum_{\indI\in\M_{\An}(K)} \sum_{v\in \SS_K/\SS_K^\indI} c_k^{\indI,v} \tau_{v(\indI)},
\end{split}
\end{align}
where the $c_k^{\indI,v}$ are defined in \eqref{eq_A_coefficientsbeta}. Note that if $v(\indI)=v'(\indI)$ for $v,v'\in \SS_K$, then $e_{v(\indI)}=e_{v'(\indI)}$, which means $c_k^{\indI,v}=c_k^{\indI,v'}$. Hence, $\varphi(\tau_{s_k})$ in \eqref{eq_def_varphi} makes sense. 

\begin{proposition} \label{prop_A_well_definedness}
The kernel of the map $\varphi$ contains both ideals $\mathcal{I}(K)$ and $\mathcal{J}(K)$ from (\ref{eq_A_cohomologyXK}).
\end{proposition}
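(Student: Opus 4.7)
The plan is to verify, for each of the three families of generators of $\mathcal{I}(K)$ and $\mathcal{J}(K)$, that its image under $\varphi$ lies in the defining ideal of $H^\ast(X_{\An})$ from Proposition~\ref{prop:A_cohom_of_perm}: the linear generators of $\mathcal{I}(K)$, the incomparable-pair quadratic generators (2), and the mixed quadratic generators (3) involving $\tau_{s_k}$.

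For the linear generators of $\mathcal{I}(K)$, fix $u \in L(\Phi_{\An})$. Every $J' \in \M_{\An}$ has a unique canonical representative $I_{J'} \in \M_{\An}(K)$ in its $\SS_K$-orbit (the element whose $N_j$-slice is the initial segment of $N_j$ of size $|J' \cap N_j|$); choose $v \in \SS_K$ with $v(I_{J'}) = J'$. Using \eqref{eq_A_coefficientsbeta} to evaluate $\sum_k \langle u, -\alpha_k^{\v}\rangle c_k^{I_{J'}, v} = -\langle u, e_{I_{J'}} - e_{J'}\rangle$, the coefficient of $\tau_{J'}$ in $\varphi\bigl(\sum_k \langle u, -\alpha_k^{\v}\rangle \tau_{s_k} + \sum_I \langle u, e_I\rangle \tau_I\bigr)$ collapses to $\langle u, e_{J'}\rangle$. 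Summing over $J'$, the image is exactly the defining linear relation $\sum_{J' \in \M_{\An}} \langle u, e_{J'}\rangle \tau_{J'}$ of $H^\ast(X_{\An})$, which is zero.

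For the type-(2) generators, the inclusion order on $\M_{\An}(K)$ is determined by the pointwise size profile $(|{\cdot} \cap N_j|)_j$, because each $N_j$-slice of an element of $\M_{\An}(K)$ is an initial segment of $N_j$. Incomparability of $I, J \in \M_{\An}(K)$ therefore provides indices $j, j'$ with $|I \cap N_j| > |J \cap N_j|$ and $|J \cap N_{j'}| > |I \cap N_{j'}|$. Since $\SS_K$ permutes each $N_j$ internally, $|v(I) \cap N_j| = |I \cap N_j|$ for every $v \in \SS_K$, and the same obstructions rule out any comparability between pairs $v(I), v'(J)$. Hence each summand of $\varphi(\tau_I)\varphi(\tau_J) = \sum_{v,v'} \tau_{v(I)}\tau_{v'(J)}$ vanishes in $H^\ast(X_{\An})$ by Proposition~\ref{prop:A_cohom_of_perm}(2).

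The main obstacle is the type-(3) generator: for $I \in \M_{\An}(K)$ non-$s_k$-invariant with $k \in K \cap N_i$, the lower-subset condition forces $I \cap N_i = [a_i, k]$, and we must verify $[\tau_I]\varphi(\tau_{s_k}) = 0$ in $H^\ast(X_{\An})$. Individual monomials $\tau_{v(I)}\tau_{v'(J)}$ in the expansion
\[
[\tau_I]\varphi(\tau_{s_k}) = \sum_{v}\sum_{v',J} c_k^{J, v'} \tau_{v(I)}\tau_{v'(J)}
\]
need not vanish, so the argument must invoke the linear relations of $H^\ast(X_{\An})$. Lemma~\ref{lem_A_coeff_nonzero} provides the crucial restriction $c_k^{J, v'} = 0$ unless $v'(J) \cap N_i$ strictly crosses $[a_i, k]$, that is, neither contains nor is contained in $[a_i, k]$. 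Because $c_k^{J, v'}$ depends only on the $N_i$-slice of $v'(J)$, while $\SS_K$ permutes the blocks $N_j$ independently, the problem factorizes along the decomposition $[n] = \bigsqcup_j N_j$ and reduces to a statement within the single block $N_i$. The plan is then to expand the restricted sum and match the surviving monomials pairwise using the linear relation $\sum_{J'} \langle u, e_{J'}\rangle \tau_{J'} = 0$ for suitable $u$, most naturally the $\SS_K$-twisted versions $u = w(\alpha_k)$ with $w \in \SS_K$, which realize the basic relation $D_k^{+} = D_k^{-}$ (equality of the sums of $\tau_{J'}$ over subsets $J'$ containing exactly one of $k, k+1$) in each $\SS_K$-translate. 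The principal difficulty is the combinatorial bookkeeping needed to confirm that this pairwise cancellation closes uniformly over all $v \in \SS_K/\SS_K^I$, and this is where the defining initial-segment structure of $I$ and the $\SS_K$-orbit decomposition of $[n]$ interact decisively.
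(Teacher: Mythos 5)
Your treatment of the linear generators of $\mathcal{I}(K)$ and of the type-(2) quadratic generators is correct and essentially the paper's argument (collapsing coefficients via \eqref{eq_A_coefficientsbeta} to recover the defining linear relation, and using the block-wise size profiles to propagate incomparability to all $\SS_K$-translates). The problem is your handling of the type-(3) generators $\tau_I\tau_{s_k}$, which is both incomplete and based on a false premise.

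You assert that the surviving monomials $\tau_{v(I)}\tau_{v'(J)}$ ``need not vanish, so the argument must invoke the linear relations,'' and you then only sketch a pairwise-cancellation scheme via twisted relations $u=w(\alpha_k)$, explicitly deferring ``the combinatorial bookkeeping'' that would make it close up. That bookkeeping is never done, so the proof is not complete as written. More importantly, the premise is wrong, and you already have in hand the two facts that kill each monomial outright. You correctly note that the lower-subset condition together with non-$s_k$-invariance forces $I\cap N_i=[k]\cap N_i$, and that Lemma~\ref{lem_A_coeff_nonzero} restricts the support of $\varphi(\tau_{s_k})$ to subsets $J'$ with $J'\cap N_i\not\subset [k]\cap N_i$ and $J'\cap N_i\not\supset [k]\cap N_i$. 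But then $I\subset J'$ would give $I\cap N_i\subset J'\cap N_i$, i.e.\ $[k]\cap N_i\subset J'\cap N_i$, and $J'\subset I$ would give the reverse inclusion; both are excluded. Hence $I$ and $J'$ are incomparable and $\tau_I\tau_{J'}=0$ already by the quadratic relations of Proposition~\ref{prop:A_cohom_of_perm}, term by term, with no appeal to the linear relations for cancellation. This settles the case $v=\mathrm{id}$. For a general coset representative $v$ the crossing condition is stated relative to $[k]\cap N_i$ rather than $v(I)\cap N_i$, so one cannot conclude incomparability of $v(I)$ with $J'$ directly; the missing ingredient is that $\varphi(\tau_{s_k})$ is $\SS_K$-invariant (which follows from the linear relation you already verified, by substituting the fundamental weight $\varpi_k$ to write $\varphi(\tau_{s_k})$ as a combination of the invariant classes $\varphi(\tau_I)$), whence $\tau_{v(I)}\varphi(\tau_{s_k})=v\bigl(\tau_I\varphi(\tau_{s_k})\bigr)=0$. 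Your proposal neither makes this observation nor supplies a substitute for it, so the step you flag as the ``principal difficulty'' is a genuine gap rather than routine bookkeeping.
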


\begin{proof}
First, we will show that $\varphi(\mc{I}(K))=0$, that is, 
\begin{equation} \label{eq_A_varphi(I_K)}
\varphi \left( \sum_{k\in K} \left< u, -\alpha^{\v}_k \right> \tau_{s_k} + \sum_{\indI\in\M_{\An}(K)} \left< u, e_\indI \right> \tau_\indI \right)=0 
\end{equation}
for any $u \in L(\Phi_{\An})$. We compute the left-hand side of \eqref{eq_A_varphi(I_K)} as follows:
\begin{align*}
& \sum_{k\in K} \left< u, -\alpha^{\v}_k \right> \varphi(\tau_{s_k}) + \sum_{\indI \in \M_{\An}(K)} \left< u, e_\indI \right> \left( \sum_{v\in \SS_K/\SS_K^{\indI}} \tau_{v(\indI)} \right) \\
=& \sum_{k\in K} \left< u, -\alpha^{\v}_k \right> \varphi(\tau_{s_k}) + \sum_{\indI \in \M_{\An}(K)} \sum_{v\in \SS_K/\SS_K^{\indI}} \left( \left< u, e_\indI-e_{v(\indI)} \right>  \tau_{v(\indI)} + \left< u, e_{v(\indI)} \right>  \tau_{v(\indI)}  \right) \\
=& \sum_{k\in K} \left< u, -\alpha^{\v}_k \right> \varphi(\tau_{s_k}) + \sum_{\indI \in \M_{\An}(K)} \sum_{v\in \SS_K/\SS_K^{\indI}} \left< u, e_\indI-e_{v(\indI)} \right>  \tau_{v(\indI)} + \sum_{\indI \in \M_{\An}} \left< u, e_\indI \right>  \tau_{\indI}  \\
=& \sum_{k\in K} \left< u, -\alpha^{\v}_k \right> \varphi(\tau_{s_k}) + \sum_{\indI \in \M_{\An}(K)} \sum_{v\in \SS_K/\SS_K^{\indI}} \left< u, e_\indI-e_{v(\indI)} \right>  \tau_{v(\indI)}\\
=& \sum_{k\in K} \left< u, -\alpha^{\v}_k \right> \left( \sum_{\indI\in\M_{\An}(K)} \sum_{v\in \SS_K/\SS_K^{\indI}} c_k^{\indI,v} \tau_{v(\indI)} \right)\\
& \qquad \qquad \qquad \qquad \qquad \qquad + \sum_{\indI\in\M_{\An}(K)} \sum_{v\in \SS_K/\SS_K^{\indI}} \left< u, \sum_{k \in K} c_k^{\indI,v} \alpha^{\v}_k \right>  \tau_{v(\indI)} \\
=&0.
\end{align*}
(The third equality follows from the fact that  $\sum_{\indI \in \M_{\An}} \left< u, e_\indI \right>  \tau_{\indI}$ is trivial in $H^\ast(X_{\An})$, as it is an element in $\mathcal{I}$ from \eqref{eq_A_cohom_of_perm}.)

Next, we prove that $\varphi(\mc{J}(K))=0$. Consider first a product $\tau_\indI\tau_\indJ$ in $\mc{J}(K)$, where $\indI, \indJ \in \M_{\An}(K)$ are as in Definition \ref{def_A_K-lower}, with $\indI\not\subset \indJ$ and $\indJ\not\subset \indI$. It follows from \eqref{eq_def_varphi} that 
\[
\varphi(\tau_\indI\tau_\indJ)=\varphi(\tau_\indI)\varphi(\tau_\indJ)=\left( \sum_{w\in \SS_K/\SS_K^\indI} \tau_{w(\indI)}\right) \left(\sum_{v\in \SS_K/\SS_K^\indJ} \tau_{v(\indJ)}\right).
\]
The last product above vanishes because otherwise $w(\indI)\subset v(\indJ)$ or $v(\indJ)\subset w(\indI)$ for some $v,w\in \SS_K$.  Indeed, by the lower condition for $\indI$ and $\indJ$ we have  $\indI\subset \indJ$ or $\indJ\subset \indI$ which contradicts the assumption that $\indI\not\subset \indJ$ and $\indJ\not\subset \indI$. 

Consider now a product $\tau_\indI\tau_{s_k}$ in $\mc{J}(K)$, where $\indI \in \M_{\An}(K), k \in K$ and $\indI$ is not $s_k$-invariant.  
Recall the $\SS_K$-orbit decomposition of $[n]$ and suppose that $k\in K\cap N_i$ for some $i$. 
Then it follows from \eqref{eq_def_varphi} and Lemma~\ref{lem_A_coeff_nonzero} that 
\begin{align*}
\varphi(\tau_\indI\tau_{s_k})&=\left(\sum_{v\in \SS_K/\SS_K^\indI}\tau_{v(\indI)}\right)\left( \sum_{L\in\M_{\An}(K)} \sum_{w\in \SS_K/\SS_K^L} c_k^{L,w}\tau_{w(L)} \right) \\
&= \left(\sum_{v\in \SS_K/\SS_K^\indI}\tau_{v(\indI)}\right) \left(\sum_{\indJ} d_\indJ\tau_\indJ\right)
\end{align*}
for some integers $d_\indJ$, where $\indJ$ runs over nonempty proper subsets of $[n]$ satisfying the condition
\begin{align}
&\indJ\cap N_i \not\subset [k]\cap N_i \ {\rm and} \ \indJ \cap N_i \not\supset [k]\cap N_i.\label{eq:conditionforTypeA} 
\end{align}
We claim that for each $v\in \SS_K$ 
$$\tau_{v(\indI)} \sum_{\indJ} d_\indJ\tau_\indJ=0.$$
First we show this for $v=id$.
It is enough to prove that $\tau_{\indI}\tau_\indJ=0$.
Since $\indI$ belongs to $\M_{\An}(K)$ and $\indI$ is not $s_k$-invariant, we have $\indI\cap N_i=[k]\cap N_i$. 
If $\indI \subset \indJ$, then $[k]\cap N_i = \indI \cap N_i \subset \indJ \cap N_i$ which contradicts \eqref{eq:conditionforTypeA}.
By a similar argument, if $\indI \supset \indJ$, which is also impossible.
Hence, we obtain $\indI \not\subset \indJ$ and $\indI \not\supset \indJ$, which implies 
\[
\tau_{\indI}\varphi(\tau_{s_k})=\tau_{\indI} \sum_{\indJ} d_\indJ\tau_\indJ=0.
\]

Considering a general $v \in \SS_K$, we first notice that  $\varphi(\tau_{s_k})$ is $\SS_K$-invariant. Indeed, plugging the $k$-th fundamental weight $\varpi_k=(t_1+\cdots+t_k)-\frac{k}{n}(t_1+\cdots+t_n)$ for $u$ in \eqref{eq_A_varphi(I_K)}, we have
\begin{align*}
\varphi(\tau_{s_k}) = \sum_{I \in \M_{\An}(K)} \langle \varpi_k, e_I \rangle \varphi(\tau_I) \in H^*(X_{\An})^{\mathfrak{S}_K}.
\end{align*}
Finally, we have
\[\tau_{v(\indI)} \sum_{\indJ} d_\indJ\tau_\indJ=\tau_{v(\indI)} \varphi(\tau_{s_k})=v(\tau_{\indI}) v(\varphi(\tau_{s_k}))=v(\tau_{\indI}\varphi(\tau_{s_k}))=0.\]
Thus, we conclude that $\varphi(\tau_\indI\tau_{s_k})=0$.
\end{proof}

\begin{lemma} \label{lemma:imagephi}
The image of $\varphi$ is $H^\ast(X_{\An})^{\SS_K}$.
\end{lemma}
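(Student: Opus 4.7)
The plan is to establish the two containments separately.

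\textbf{Containment $\operatorname{im}(\varphi) \subseteq H^\ast(X_{\An})^{\SS_K}$.} By construction $\varphi(\tau_I) = [\tau_I]$ is an $\SS_K$-orbit sum, hence $\SS_K$-invariant. For $\varphi(\tau_{s_k})$, I would reuse the identity established at the end of the proof of Proposition~\ref{prop_A_well_definedness}: plugging the fundamental weight $\varpi_k$ into the relation $\varphi(\mathcal{I}(K)) = 0$ yields
\[
\varphi(\tau_{s_k}) = \sum_{I \in \M_{\An}(K)} \langle \varpi_k, e_I \rangle\, \varphi(\tau_I),
\]
which exhibits $\varphi(\tau_{s_k})$ as a $\QQ$-linear combination of orbit sums $[\tau_I]$, hence $\SS_K$-invariant. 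Since the generators of the polynomial ring map into the invariant subring, so does the whole image.

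\textbf{Containment $\operatorname{im}(\varphi) \supseteq H^\ast(X_{\An})^{\SS_K}$.} By Proposition~\ref{prop_A_generated_by_deg2}, $H^\ast(X_{\An})^{\SS_K}$ is generated as a $\QQ$-algebra by $\{[\tau_J] : J \in \M_{\An}\}$. It therefore suffices to show $[\tau_J] \in \operatorname{im}(\varphi)$ for every $J \in \M_{\An}$, and for this it is enough to find, for each $J$, some $w \in \SS_K$ such that $w(J) \in \M_{\An}(K)$; then $[\tau_J] = [\tau_{w(J)}] = \varphi(\tau_{w(J)})$ since the two orbit sums are literally equal.

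\textbf{Finding the representative.} Consider the $\SS_K$-orbit decomposition $[n] = N_1 \sqcup \cdots \sqcup N_s$. Because each $N_j$ is a maximal set of consecutive integers closed under the generators $s_k$ with $k \in K$, the group $\SS_K$ factors as $\SS_{N_1} \times \cdots \times \SS_{N_s}$, acting on each block as the full symmetric group on that block. Given $J \in \M_{\An}$, write $J \cap N_j$ for its intersection with each block; within the block $N_j$, the unique lower subset of $N_j$ of cardinality $|J \cap N_j|$ is $\{\min N_j, \min N_j + 1, \ldots, \min N_j + |J \cap N_j| - 1\}$, and we can choose $w_j \in \SS_{N_j}$ carrying $J \cap N_j$ onto this lower subset. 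Taking $w = (w_1, \ldots, w_s) \in \SS_K$ produces $w(J) \in \M_{\An}(K)$, as desired.

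\textbf{Expected main obstacle.} The two containments are both essentially formal given what has already been set up: invariance of $\varphi(\tau_{s_k})$ was effectively proved earlier, and the orbit-representative argument reduces to the fact that $\SS_K$ acts as a product of full symmetric groups on the blocks $N_j$. The only subtlety to be careful about is that the orbit sum $[\tau_J]$ is genuinely independent of the orbit representative chosen (i.e.\ that $[\tau_J] = [\tau_{w(J)}]$), which is immediate from the definition but worth mentioning explicitly to make the reduction watertight.
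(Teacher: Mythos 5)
Your proof is correct and follows essentially the same route as the paper: the reverse containment is exactly the paper's argument (invoke Proposition~\ref{prop_A_generated_by_deg2} and note that each $\SS_K$-orbit in $\M_{\An}$ meets $\M_{\An}(K)$, so $[\tau_J]=\varphi(\tau_{J'})$), and the forward containment, which the paper leaves implicit, you verify using the same identity $\varphi(\tau_{s_k})=\sum_{I}\langle \varpi_k,e_I\rangle\varphi(\tau_I)$ that appears in the proof of Proposition~\ref{prop_A_well_definedness}. The extra detail on choosing the orbit representative (each block $N_j$ is an interval on which $\SS_K$ acts as the full symmetric group, so any $J\cap N_j$ can be moved to the initial segment of that block) is a correct and welcome elaboration of what the paper states without proof.
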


\begin{proof}
The lemma follows from Proposition~\ref{prop_A_generated_by_deg2}. 
In fact, $H^\ast(X_{\An})^{\SS_K}$ is generated by the $[\tau_\indI]$'s for $\indI \in \M_{\An}$ and the $\SS_K$-orbit of $\indI$ contains an element $\indI'\in \M_{\An}(K)$ so that $[\tau_\indI]=\varphi(\tau_{\indI'})$. 
\end{proof}

We claim now that the restriction of $\varphi$ to $H^{2n-2}(X_{\An}(K))$ is an isomorphism onto $H^{2n-2}(X_{\An})^{\SS_K}$.  Once we prove this claim, Theorem \ref{main} for type $\An$ will follow from Lemmas \ref{lem_PDA-implies_isom}, \ref{prop_A_well_definedness}, and
\ref{lemma:imagephi}.
We observe that both $H^{2n-2}(X_{\An}(K))$ and $H^{2n-2}(X_{\An})^{\SS_K}$ are $1$-dimensional over $\QQ$.  Indeed, $H^{2n-2}(X_{\An})^{\SS_K}=H^{2n-2}(X_{\An})$, since the action of $\SS_K$ on $X_{\An}$ is orientation preserving.  This proves the claim.

\section{Hessenberg varieties}\label{sec_hess}

The toric variety $X_\Phi$ can be realized as a subvariety of a flag variety, as shown by De Mari, Procesi and Shayman in \cite{MPS}.  Let $G$ be a connected, simply-connected, semisimple algebraic group over $\CC$ and $B$ a Borel subgroup and $T \leq B$ a maximal torus. We denote by $\g$, $\b$, and ${\mathfrak h}$ their respective Lie algebras.  There is a Cartan decomposition $\g={\mathfrak h} \oplus \bigoplus_{\alpha \in \Phi}\g_\alpha$, with $\Phi$ the associated root system.  Assume that $G$ has rank $n$ and, as above, that the set of simple roots in $\Phi$ is $\Sigma=\{\alpha_1,\ldots,\alpha_n\}$.

A $\b$-submodule $\mathcal{H}$ of $\g$ is called a \emph{Hessenberg space} if it  contains $\b$.  In fact, a subspace $\h$ of $\g$ is a Hessenberg space if and only if $\h=\b \oplus \bigoplus_{\gamma \in \Psi} \g_\gamma$, where $\Psi$ is a set of negative roots such that $\gamma +\alpha \in \Psi$ whenever $\gamma \in \Psi$, $\alpha$ is a simple root, and $\gamma +\alpha$ is a negative root.  In particular,
$$
\h_1\colonequals\b \oplus \bigoplus_{\alpha \in \Sigma} \g_{-\alpha}
$$
is a Hessenberg space.

The \emph{Hessenberg variety} $\Hess(\x,\h)$ associated with an element $\x \in \g$ and a Hessenberg space $\h$ is
\[
\Hess(x,\h)\colonequals \{gB \in G/B \mid \mbox{Ad}(g^{-1})(\x) \in \h \}.
\]
An element $r \in \g$ is \emph{regular} if its centralizer in $\g$ has smallest possible dimension, and a \emph{regular Hessenberg variety} is any $\Hess(r,\h)$ such that $r$ is regular.  Similarly, a \emph{regular semisimple Hessenberg variety} is any $\Hess(s,H)$ such that $s$ is regular and semisimple.  

Regular semisimple Hessenberg varieties (for arbitrary $G$ as above) were defined and studied in \cite{MPS}, after work in type A by De Mari and Shayman in \cite{MS}.  The type A case has received considerable attention, as it involves connections between combinatorics, geometry and representation theory, see for example \cite{Tym,SW,BroCho,AHMMS}.  

Given a regular semisimple Hessenberg variety $\Hess(s,\h)$, we may assume that $T$ is the centralizer of $s$ in the adjoint action of $G$.  Then $T$ acts on $\Hess(s,\h)$ by left translation.  This action is equivariantly formal, as defined by Goresky, Kottwitz and MacPherson in \cite{GKM}.  It follows that the $T$-equivariant cohomology ring of $\Hess(s,\h)$ is isomorphic with a certain ring of functions from the vertex set of the associated moment graph to a polynomial ring.  As observed by Tymoczko in \cite{Tym}, the Weyl group $W$ of $G$ acts on this moment graph, and this gives rise to representations of $W$ on both the $T$-equivariant and ordinary cohomology rings of $\Hess(s,\h)$.  The representation of $W$ on $H^\ast(\Hess(s,\h))$ is called the \emph{dot representation}.  The key points for our purposes are the following ones:

\begin{itemize}
\item It is shown in \cite{MPS} that $X_{\Phi}$ is isomorphic with $\Hess(s,\h_1)$ for every regular semisimple $s \in \g$.
\item The (graded) representation of $W$ on $H^\ast(X_{\Phi})$ determined by the action of $W$ on the weight polytope $P_W$, defined in \eqref{eq_weight_poly}, is isomorphic to the dot representation of $W$ on $H^\ast(\Hess(s,\h_1))$, see \cite{Tym,Teff}.
\end{itemize}

Moreover, for regular semisimple $s \in \g$, arbitrary Hessenberg space $\h$, and $K \subseteq [n]$, the fixed point subring $H^\ast(\Hess(s,\h))^{W_K}$ (in the dot representation) is isomorphic to the cohomology ring of a regular Hessenberg variety determined by $K$.  For each $\alpha \in \Sigma$, fix $e_\alpha \in \g_\alpha \setminus \{0\}$. Given $K \subseteq [n]$, set $n_K\colonequals\sum_{i \in K}e_{\alpha_i}$. There exist infinitely many $s_K \in {\mathfrak h}$ such that $[s_K,n_K]=0$ and $s_K+n_K$ is regular.  Choose any such $s_K$ and set $r_K\colonequals s_K+n_K$. 

The following result was proved by Balibanu and Crooks in \cite{BalCro}.  

\begin{theorem} \cite[Proposition 4.7]{BalCro} \label{theorem:BalCro}
Let $s \in \g$ be regular semisimple and let $\h \subseteq \g$ be a Hessenberg space.  For each $K \subseteq [n]$, the graded rings $H^\ast(\Hess(s,\h))^{W_K}$ and $H^\ast(r_K,\h))$ are isomorphic.
\end{theorem}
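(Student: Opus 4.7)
The plan is to factor the desired isomorphism through the semisimple (but not regular) Hessenberg variety $\Hess(s_K,\h)$, which is linked to both sides by one-parameter families. The key observation is that $r_K = s_K + n_K$ is the Jordan decomposition of $r_K$: by hypothesis $Z_G(s_K)$ is a Levi subgroup $L_K$ whose Weyl group is $W_K$, and $n_K$ is regular nilpotent in the Lie algebra of $L_K$. Scaling $n_K$ within the centralizer gives a $G$-conjugacy $s_K + tn_K \sim r_K$ for every $t \in \CC^\ast$, so the family
\[
\mc{X} \colonequals \{\Hess(s_K + tn_K, \h)\}_{t \in \CC}
\]
has every nonzero fiber isomorphic to $\Hess(r_K,\h)$, central fiber $\Hess(s_K,\h)$, and is smooth over $\CC^\ast$.

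The first main step is to prove $H^\ast(\Hess(r_K,\h)) \cong H^\ast(\Hess(s_K,\h))^{W_K}$. The group $W_K$ acts on $\Hess(s_K,\h)$ through the residual action of $N_{L_K}(T)/T$, where $T \subseteq L_K$ is a maximal torus, and this action is equivariantly formal with respect to $T$. I would identify $H^\ast_T(\Hess(s_K,\h))^{W_K}$ combinatorially from its GKM moment graph, and match it against the GKM presentation of $H^\ast_T(\Hess(r_K,\h))$ coming from the Bialynicki--Birula decomposition associated to a generic cocharacter of $T$. The specialization morphism from $\mc{X}$ supplies a concrete ring homomorphism realizing this identification; a weight/spectral-sequence argument, together with equivariant formality of both fibers, should show that it is surjective with kernel exactly the non-$W_K$-invariants, yielding the isomorphism after forgetting the equivariant structure.

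The second step reduces the theorem to the first by comparing $\Hess(s,\h)$ with $\Hess(s_K,\h)$. Moving $s$ toward $s_K$ along the affine segment $\{(1-t)s + ts_K\}_{t \in \CC}$ gives a family of Hessenberg varieties, smooth and proper over the open locus of regular semisimple parameters; all such fibers are diffeomorphic as $W$-spaces in the dot representation, so $H^\ast(\Hess(s,\h)) \cong H^\ast(\Hess(s',\h))$ as $W$-modules for every regular semisimple $s'$ on this segment. Passing to $W_K$-invariants and identifying the limit as $t \to 1$ then yields $H^\ast(\Hess(s,\h))^{W_K} \cong H^\ast(\Hess(s_K,\h))^{W_K}$, which combines with Step~1 to prove the theorem. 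The main obstacle is Step~1: because $\Hess(s_K,\h)$ can be singular, establishing that the specialization morphism is surjective onto the $W_K$-invariants with no unexpected kernel or higher monodromy contribution requires careful control over the degeneration, most likely through a matched pair of $W_K$-equivariant Bialynicki--Birula stratifications on $\Hess(r_K,\h)$ and $\Hess(s_K,\h)$ whose fixed-point data can be compared directly.
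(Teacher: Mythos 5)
This statement is quoted from \cite[Proposition 4.7]{BalCro}; the paper gives no proof of its own, so your proposal has to be measured against the known arguments (Brosnan--Chow, Balibanu--Crooks, Vilonen--Xue). Unfortunately the proposed factorization through the non-regular semisimple variety $\Hess(s_K,\h)$ is false at both ends. Test it with $K=[n]$: the condition $[s_K,n_K]=0$ with $n_K$ regular nilpotent forces $s_K=0$, so $\Hess(s_K,\h)=G/B$ for \emph{every} Hessenberg space $\h$, and the $W$-action you describe is the standard one on the coinvariant algebra, whence $H^\ast(\Hess(s_K,\h))^{W_K}=H^\ast(G/B)^{W}\cong\QQ$. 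Your Step~1 would then assert $H^\ast(\Hess(r_{[n]},\h))\cong\QQ$ (false, e.g.\ $\h=\g$ gives $\Hess(r_{[n]},\g)=G/B$), and your Step~2 would assert $H^\ast(\Hess(s,\h))^{W}\cong\QQ$ (also false; already for $\h=\h_1$ this is $H^\ast(X_\Phi)^W$, which is the full cohomology of a toric orbifold). The error in Step~2 is the phrase ``identifying the limit as $t\to1$'': cohomology is not continuous across the degeneration from regular semisimple parameters to the non-regular $s_K$ --- the special fiber jumps, its Betti numbers are strictly larger in general, and the dot representation is only defined over the regular semisimple locus, so nothing identifies the limit with the invariants. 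Step~1 has a parallel defect: your family $\{\Hess(s_K+tn_K,\h)\}$ is constant over $\CC^\ast$, hence has trivial monodromy, so even the local invariant cycle theorem applied to it yields only a surjection $H^\ast(\Hess(s_K,\h))\twoheadrightarrow H^\ast(\Hess(r_K,\h))$, with $W_K$-invariants appearing nowhere.

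The known proofs degenerate in the opposite direction: one works with a family whose \emph{generic} fiber is the regular semisimple variety $\Hess(s,\h)$ and whose \emph{special} fiber is the regular, non-semisimple $\Hess(r_K,\h)$ (for instance the fibers of the universal Hessenberg family over a disc through the image of $r_K$ in the adjoint quotient $\mathfrak h/W$). One shows that the local monodromy of this family acts on $H^\ast(\Hess(s,\h))$ precisely through the dot action of $W_K$, and the local invariant cycle theorem then gives a surjection $H^\ast(\Hess(r_K,\h))\twoheadrightarrow H^\ast(\Hess(s,\h))^{W_K}$ of graded rings; injectivity follows by comparing dimensions via Precup's affine paving of regular Hessenberg varieties. (Brosnan--Chow carry this out for graded vector spaces in type $A$; Balibanu--Crooks obtain the ring isomorphism by a perverse-sheaf argument over the adjoint quotient.) If you want to salvage your outline, discard the intermediate object $\Hess(s_K,\h)$ entirely and instead identify the monodromy of the regular-semisimple-to-$r_K$ degeneration with $W_K$.
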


\begin{remark}
Before the work of Balibanu--Crooks, there were various partial results.   Brosnan and Chow proved in \cite{BroCho}  that when $G=SL_{n+1}(\CC)$, the graded vector spaces $H^\ast(\Hess(s,\h))^{W_K}$ and $H^\ast(\Hess(r_K,\h))$ are isomorphic.  Under the assumption that $K=[n]$, Theorem \ref{theorem:BalCro} was proved by the first two named authors of this paper along with H. Abe and Harada for $G=SL_{n+1}(\CC)$ in \cite{AHHM}, and later by the same two named authors along with T. Abe, Murai and Sato for arbitrary $G$ in \cite{AHMMS}.
Another proof of Theorem \ref{theorem:BalCro} was given by Vilonen and Xue in \cite{ViXu}.
\end{remark}

Combining Theorem \ref{theorem:BalCro} in the case $\h=\h_1$ with Theorem \ref{main}, we obtain the following result.

\begin{corollary} \label{corollary:main}
Let $G$ be a connected, simply-connected, simple complex algebraic group of classical type, with rank $n$.
For arbitrary $K \subseteq [n]$, the graded rational cohomology rings
$H^\ast(\Hess(r_K,\h_1))$ and $H^\ast(X(K))$ are isomorphic.
\end{corollary}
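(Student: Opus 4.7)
The plan is to simply chain together the two main theorems already at our disposal, since the corollary is essentially a formal consequence of Theorem \ref{main} and Theorem \ref{theorem:BalCro}. The underlying geometric input is the identification $X_\Phi \cong \Hess(s,\h_1)$ from De Mari--Procesi--Shayman, together with the fact (recorded in the two bulleted points preceding Theorem \ref{theorem:BalCro}) that the $W$-action on $H^\ast(X_\Phi;\QQ)$ induced by the action on the weight polytope $P_W$ agrees with Tymoczko's dot representation on $H^\ast(\Hess(s,\h_1);\QQ)$.

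First, I would fix a regular semisimple element $s \in \g$ and use the above identification to obtain an isomorphism of graded $W$-representations
\[
H^\ast(X_\Phi;\QQ) \;\cong\; H^\ast(\Hess(s,\h_1);\QQ).
\]
Taking $W_K$-invariants on both sides (which is an exact functor over $\QQ$, so it commutes with the isomorphism) yields
\[
H^\ast(X_\Phi;\QQ)^{W_K} \;\cong\; H^\ast(\Hess(s,\h_1);\QQ)^{W_K}
\]
as graded $\QQ$-algebras.

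Next, I would apply Theorem \ref{theorem:BalCro} with $\h = \h_1$, which gives
\[
H^\ast(\Hess(s,\h_1);\QQ)^{W_K} \;\cong\; H^\ast(\Hess(r_K,\h_1);\QQ),
\]
and apply Theorem \ref{main} (valid precisely because $\Phi$ is of type $A_n$, $B_n$, $C_n$, or $D_n$, i.e., classical type) to get
\[
H^\ast(X_\Phi;\QQ)^{W_K} \;\cong\; H^\ast(X_\Phi(K);\QQ).
\]
Stringing the three displayed isomorphisms together produces the desired isomorphism $H^\ast(X_\Phi(K);\QQ) \cong H^\ast(\Hess(r_K,\h_1);\QQ)$.

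There is no real obstacle here beyond invoking the two cited theorems in the correct order; the only point worth flagging is that the classical-type hypothesis in the corollary is inherited from Theorem \ref{main}, while Theorem \ref{theorem:BalCro} holds for arbitrary $G$. Consequently, if an analogue of Theorem \ref{main} were proved in the exceptional types (as conjectured in the introduction), the same argument would immediately extend Corollary \ref{corollary:main} to all simple $G$.
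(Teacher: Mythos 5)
Your argument is exactly the paper's: the corollary is stated there as an immediate consequence of combining Theorem \ref{theorem:BalCro} (with $\h=\h_1$) and Theorem \ref{main}, using the identification $X_\Phi\cong\Hess(s,\h_1)$ and the agreement of the dot representation with the polytope action, just as you do. The proof is correct and requires no further comment.
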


\begin{remark}
$\Hess(r_K,\h_1)$ and $X(K)$ are not isomorphic in general.
In fact, Peterson variety $\Hess(r_{[n]},\h_1)$ is not normal (\cite[Theorem~14]{Kos}, \cite[Corollary~7]{InsYon}), while the corresponding toric variety $X([n])$ is normal.
\end{remark}

\section{$h$-polynomials of partitioned weight polytopes}\label{sect_h-poly}

Given a simple polytope $P$, we write $f_i(P)$ for the number of $i$-dimensional faces of $P$.  the {\it $f$-polynomial} of $P$ is
$$
f_P(t)\colonequals\sum_{i \geq 0}f_i(P)t^i,
$$
and the {\it $h$-polynomial} of $X$ is
$$
h_P(t)\colonequals f_P(t-1).
$$
So, knowing $h_P(t)$ is the same as knowing each $f_i(P)$.  The key point for us is that if $X_P$ is the toric variety associated to rational simple polytope $P$, then
\begin{equation} \label{h.poly}
\sum_{i \geq 0}\dim_\CC H^{2i}(X_P;\CC) t^i=h_P(t),
\end{equation}
we refer to \cite[Section 5.2]{Ful} for instance. 

We will use (\ref{h.poly}) to compute $h$-polynomials of partitioned weight polytopes $P_W(K)$ in two ways.  First, we define $\chi_\Phi$ to be the $t$-graded character for the representation of $W$ on $H^\ast(X_\Phi;\CC)$, so
$$
\chi_\Phi(w)\colonequals\sum_{i \geq 0} \mathsf{Trace}(w,H^{2i}(X_\Phi;\CC)) t^i,
$$
and write $\langle \cdot,\cdot \rangle_G$ for the usual inner product of characters of a finite group $G$.  It follows from (\ref{h.poly}) and Theorem \ref{main} that
\begin{equation} \label{method.1}
h_{P_W(K)}(t)=\langle \chi_\Phi,1 \rangle_{W_K}=\frac{1}{|W_K|}\sum_{w \in W_K}\chi_{\Phi}(w).
\end{equation}

Stembridge provides in \cite[Corollary 1.6]{Stem1} a general formula for $\chi_\Phi(w)$, and gives specific formulas for types $A_n, B_n, C_n$, and $D_n$ in \cite[Corollary 6.1, Corollary 7.2, Theorem 9.2]{Stem1}, respectively.  These can be combined with (\ref{method.1}) to give formulas for $h_{P_W(K)}(t)$.  We write a specific formula in the case $\Phi$ is of type $A_{n-1}$, where \cite[Corollary 6.1]{Stem1} says that if $w \in \SS_n$ has cycle type $\lambda=\lambda(w)=(\lambda_1,\ldots,\lambda_\ell)$, then
\begin{equation} \label{stem.a}
\chi_\Phi(w)=E_{\ell}(t)\prod_{j=1}^\ell [\lambda_j]_t.
\end{equation}
(Here $E_m(t)=\sum_{w \in S_m}t^{\sf{des}(w)}$ is the Eulerian polynomial, that is, the generating function for descents, and $[m]_t\colonequals\frac{t^m-1}{t-1}$.)

Combining (\ref{method.1}) and (\ref{stem.a}), we obtain the following result.

\begin{proposition} \label{type.a.method.1}
Assume that $\Phi$ is of type $A_{n-1}$ and $K \subseteq [n-1]$.  Then
$$
h_{P_W(K)}(t)=\frac{1}{|W_K|}\sum_{w \in W_K}E_{\ell(\lambda(w))}(t)\prod_{j=1}^{\ell(\lambda(w))}[\lambda(w)_j]_t.
$$
\end{proposition}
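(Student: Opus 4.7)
The plan is to observe that the proposition is, by design, a direct combination of two ingredients that have already been assembled in the paper: Equation~(\ref{method.1}) and Stembridge's character formula~(\ref{stem.a}).

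First I would unpack Equation~(\ref{method.1}): by Danilov--Jurkiewicz, applied to the simple rational polytope $P_W(K)$ (Proposition~\ref{prop_A_flagness_part_w_poly}), the Betti numbers of the associated toric variety $X_\Phi(K)$ satisfy $\sum_{i \geq 0}\dim_\CC H^{2i}(X_\Phi(K);\CC)\,t^i = h_{P_W(K)}(t)$. Theorem~\ref{main} identifies this graded vector space with $H^\ast(X_\Phi;\CC)^{W_K}$, and the standard fact that $\dim V^G = \langle \chi_V, 1\rangle_G = \frac{1}{|G|}\sum_{g \in G}\chi_V(g)$ then yields
\[
h_{P_W(K)}(t) = \frac{1}{|W_K|}\sum_{w \in W_K}\chi_\Phi(w),
\]
which is exactly~(\ref{method.1}).

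Second, I would substitute Stembridge's formula~(\ref{stem.a}): for $w \in \SS_n$ of cycle type $\lambda(w) = (\lambda_1,\ldots,\lambda_\ell)$, one has $\chi_\Phi(w) = E_\ell(t)\prod_{j=1}^\ell [\lambda_j]_t$. Plugging this into the previous display gives the claimed formula verbatim. No further manipulation is required, and there is no serious obstacle; the only subtlety worth mentioning is that in~(\ref{method.1}) the trace is taken with respect to the representation of $W_K$ obtained by restricting the $W$-action on $H^\ast(X_\Phi;\CC)$, and the character values of this restriction are simply $\chi_\Phi|_{W_K}$, so Stembridge's formula applies unchanged to $w \in W_K \subseteq \SS_n$.
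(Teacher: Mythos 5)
Your proposal is correct and follows exactly the route the paper takes: the paper derives \eqref{method.1} from \eqref{h.poly} together with Theorem \ref{main}, and then states Proposition \ref{type.a.method.1} as the immediate combination of \eqref{method.1} with Stembridge's formula \eqref{stem.a}. Your additional remark that the character of the restricted $W_K$-representation is just $\chi_\Phi|_{W_K}$ is a fair (if routine) point that the paper leaves implicit.
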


\begin{example} \label{type.a.ex.method.1}
Say $n=5$ and $K=\{1,2,4\}$.  Then $W_K \cong \SS_3 \times \SS_2$ has one element of cycle type $(1,1,1,1,1)$, four of type $(2,1,1,1)$, three of type $(2,2,1)$, two of type $(3,1,1)$ and two of type $(3,2)$.  According to Proposition \ref{type.a.method.1},
\begin{eqnarray*}
h_{P_W(K)}(t) & = & \frac{1}{12} ( E_5(t)+4E_4(t)(1+t)+3E_3(t)(1+t)^2+2E_3(t)(1+t+t^2) \\ & + & 2E_2(t)(1+t+t^2)(1+t)) \\ & = & 1+9t+17t^2+9t^3+t^4.
\end{eqnarray*}
\end{example}

Next we compute $h_{P_W(K)}(t)$ using (\ref{h.poly}) and Corollary \ref{corollary:main}, which together imply that
\begin{equation} \label{method.2}
h_{P_W(K)}(t)=\sum_i \dim_\QQ H^{2i}(\Hess(r_K,\h_1))t^i.
\end{equation}

In \cite{Pre}, Precup shows that the nonempty intersections of a regular Hessenberg variety $\Hess(r_K,\h)$ with Schubert cells in the associated flag variety determine an affine paving.  She gives a criterion for determining which such intersections are nonempty and a formula for the dimensions of the nonempty intersections.  Applying Precup's results in the case $\h=\h_1$, we obtain the following result.  We maintain the notation from Section \ref{sec_hess}.

\begin{proposition}[See Lemma 2 of \cite{Pre}]
Let $K \subseteq [n]$ and fix $r_K \in \g$.  Set $\Sigma_K\colonequals\{\alpha_i:i \in K\}$ and define $$W(K)\colonequals\{w \in W:w^{-1}(\Sigma_K) \subseteq \Phi^+ \cup -\Sigma\}.$$ For $w \in W(K)$, set $$d(w)\colonequals\left|\{\alpha \in -\Sigma:w(\alpha) \in \Phi^+\}\right|.$$ Then
\begin{equation} \label{precup.method.2}
\sum_i \dim_\QQ H^{2i}(\Hess(r_K,\h_1))t^i=\sum_{w \in W(K)}t^{d(w)}.
\end{equation}
\end{proposition}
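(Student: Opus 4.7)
The plan is to derive the stated identity by specializing Precup's affine paving theorem for regular Hessenberg varieties to the particular Hessenberg space $\h_1$. Since the statement is explicitly attributed to Lemma~2 of~\cite{Pre}, the proof is essentially an unpacking exercise rather than a new argument, but some care is needed to match notation.

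First, I would recall Precup's construction. For any regular $r \in \g$ with Jordan decomposition $r = s_K + n_K$ (where the support of $n_K$ in the simple root spaces is $\Sigma_K$) and any Hessenberg space $\h$, the intersections $\Hess(r,\h) \cap BwB/B$ with the Schubert cells of $G/B$ form an affine paving of $\Hess(r,\h)$. In particular the $T$-equivariant affine paving yields a cell decomposition, and since each piece is an affine space its contribution to cohomology is concentrated in even degree equal to twice its complex dimension. Hence
\begin{equation*}
\sum_i \dim_\QQ H^{2i}(\Hess(r_K,\h_1)) \, t^i \;=\; \sum_{w : \, \Hess(r_K,\h_1) \cap BwB/B \neq \emptyset} t^{\dim_\CC (\Hess(r_K,\h_1) \cap BwB/B)}.
\end{equation*}

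Second, I would specialize Precup's nonemptiness criterion to $\h = \h_1$. The roots of $\h_1$ beyond $\b$ are exactly the negative simple roots $-\Sigma$, so $\Phi_{\h_1} = \Phi^+ \cup (-\Sigma)$. Precup's criterion asserts that $\Hess(r_K,\h) \cap BwB/B$ is nonempty precisely when $w^{-1}(\alpha) \in \Phi_\h$ for every simple root $\alpha$ in the support of the nilpotent part of $r$. For $r_K$ this support is $\Sigma_K$, so the condition becomes $w^{-1}(\Sigma_K) \subseteq \Phi^+ \cup (-\Sigma)$, which is precisely the definition of $W(K)$.

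Third, I would match Precup's dimension formula to $d(w)$. In general, $\dim_\CC(\Hess(r,\h) \cap BwB/B)$ equals the number of roots $\beta \in \Phi^+$ such that $-w(\beta) \in \Phi_\h$ (or an equivalent reformulation counting certain root-flips). For $\h_1$, a positive root $\beta$ contributes exactly when $-w(\beta) \in \Phi^+ \cup (-\Sigma)$, and after a short rewriting this reduces to counting $\alpha \in -\Sigma$ with $w(\alpha) \in \Phi^+$, i.e.\ $d(w)$.

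The main obstacle is purely translational: one must carefully align Precup's intrinsic parametrization of the cells (stated in terms of the negative root subgroups $U^-_\alpha$ and an ``$\h$-admissibility'' condition on positive roots relative to the Bruhat cell $BwB/B$) with the combinatorial data $(W(K), d(w))$ used here. Once this dictionary is set up and checked on both the nonemptiness criterion and the dimension count, the equation \eqref{precup.method.2} follows directly from the cell count for the affine paving.
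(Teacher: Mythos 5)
Your overall plan coincides with the paper's: the proposition is stated there with no proof beyond the bracketed citation of Precup's Lemma~2, the authors simply recording that her affine paving, her nonemptiness criterion, and her cell-dimension formula, specialized to $\h=\h_1$ (for which the root set of the Hessenberg space is $\Phi^+\cup(-\Sigma)$), produce exactly $W(K)$ and $d(w)$. Your handling of the paving and of the nonemptiness criterion matches this. The one substantive formula you actually write down, however, is misremembered: the dimension of a nonempty cell $\Hess(r_K,\h)\cap BwB/B$ is not $|\{\beta\in\Phi^+ : -w(\beta)\in\Phi_{\h}\}|$ --- for $\h=\h_1$ and $w=e$ that count equals $n$, whereas $d(e)=0$ --- but rather $|\{\gamma\in\Phi^-\cap\Phi_{\h} : w(\gamma)\in\Phi^+\}|$, which for $\h=\h_1$ is literally $|\{\alpha\in-\Sigma : w(\alpha)\in\Phi^+\}|=d(w)$ with no further rewriting needed. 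Since you explicitly hedged that formula and your intended specialization is the correct one, this is a citation slip to fix rather than a gap in the argument.
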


Together, (\ref{method.2}) and (\ref{precup.method.2}) determine $h_{P_W(K)}(t)$.  We provide again a more explicit formula and an example in type $A$.  We observe that for $w \in S_n$ and $\alpha=e_i-e_{i+1} \in \Sigma$, we have $w^{-1}(\alpha) \in \Phi^+ \cup -\Sigma$ if and only if $w^{-1}(i)-w^{-1}(i+1) \leq 1$.  Moreover, $d(w)$ is the descent number $\mathsf{des}(w)$.  We obtain the following result.

\begin{proposition} \label{type.a.method.2}
Assume that $\Phi$ is of type $A_{n-1}$ and $K \subseteq [n-1]$.  Then $W(K)$ consists of those $w \in W$ such that $w^{-1}(i)-w^{-1}(i+1) \leq 1$ for all $i \in K$, and
$$
h_{P_W(K)}(t)=\sum_{w \in W(K)}t^{\mathsf{des}(w)}.
$$
\end{proposition}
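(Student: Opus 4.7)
The plan is to specialize the general Precup formula \eqref{precup.method.2} to type $A_{n-1}$ and combine it with \eqref{method.2}. Concretely, I need to rewrite the definition of $W(K)$ and of $d(w)$ in one-line notation of permutations, so that both reduce to familiar combinatorial objects on $\SS_n$.

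First, I would recall the explicit root data in type $A_{n-1}$: the simple roots are $\alpha_i = e_i - e_{i+1}$ for $i \in [n-1]$, the positive roots are the $e_i - e_j$ with $i<j$, and the Weyl group $W = \SS_n$ acts on the roots by $w(e_i - e_j) = e_{w(i)} - e_{w(j)}$. Consequently $-\Sigma = \{\,e_{i+1}-e_i : i \in [n-1]\,\}$.

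Next, I would translate the defining condition $w^{-1}(\Sigma_K) \subseteq \Phi^+ \cup -\Sigma$ of $W(K)$. For fixed $i \in K$,
\[
w^{-1}(\alpha_i) = e_{w^{-1}(i)} - e_{w^{-1}(i+1)}.
\]
This vector belongs to $\Phi^+$ iff $w^{-1}(i) < w^{-1}(i+1)$, and it belongs to $-\Sigma$ iff $w^{-1}(i) = w^{-1}(i+1) + 1$. Combining the two cases yields the equivalent condition $w^{-1}(i) - w^{-1}(i+1) \leq 1$, which by intersecting over all $i \in K$ gives the first assertion of the proposition.

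Finally, I would identify the exponent $d(w)$ with $\mathsf{des}(w)$. For $-\alpha_i = e_{i+1}-e_i \in -\Sigma$,
\[
w(-\alpha_i) = e_{w(i+1)} - e_{w(i)},
\]
which lies in $\Phi^+$ iff $w(i+1) < w(i)$, i.e.\ iff $i$ is a descent of $w$. Hence
\[
d(w) = \bigl|\{\alpha \in -\Sigma : w(\alpha) \in \Phi^+\}\bigr| = \bigl|\{i \in [n-1] : w(i) > w(i+1)\}\bigr| = \mathsf{des}(w).
\]
Substituting these two identifications into \eqref{precup.method.2} and using \eqref{method.2} produces the stated formula. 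There is no real obstacle here: the argument is purely a dictionary between the root-system language used in \cite{Pre} and the one-line notation of $\SS_n$, and all identifications are immediate from the explicit root data.
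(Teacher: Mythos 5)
Your proposal is correct and follows exactly the route the paper takes: it specializes Precup's formula \eqref{precup.method.2} via \eqref{method.2} by translating the conditions $w^{-1}(\Sigma_K)\subseteq\Phi^+\cup-\Sigma$ and $d(w)$ into one-line notation, which is precisely the observation the paper makes just before stating the proposition. The only difference is that you spell out the root-system computations that the paper leaves implicit.
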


\begin{example} \label{type.a.ex.method.2}
Say $n=4$ and $K=\{1,3\}$.  Then $W(K)$ consists of those $w \in S_4$ such that $w^{-1}(1)-w^{-1}(2) \leq 1$ and $w^{-1}(3)-w^{-1}(4) \leq 1$.  These are the permutations (in one-line notation) $1234$, $1243$, $1324$, $1342$, $1432$, $2134$, $2143$, $3124$, $3142$, $3214$, $3412$, $3421$, $4312$, and $4321$.  We conclude that $$h_{P_W(K)}(t)=1+6t+6t^2+t^3.$$
\end{example}

\section{Open questions} \label{openquestions}

We discuss here some questions and conjectures related to our work.

\subsection{Quotient polytopes and quotient varieties} Say $P \subseteq {\mathbb R}^n$ is a rational simple polytope and $G \leq GL_n({\mathbb R})$ preserves $P$ setwise.  The action of $G$ on $P$ determines an action on the toric variety $X(P)$ and we may form the quotient, $X(P)/G$.  Let us assume that $G$ is generated by reflections through hyperplanes intersecting transversally with the edges of $P$.  Then there is a ``nice" fundamental region $R$ for this action, that is, a set of orbit representatives that is convex and whose closure is a polytope with every facet being supported by either a hyperplane supporting a facet of $P$ or a hyperplane associated to a reflection in $G$.  We write $P/G$ for the polytope $\overline{R}$.

\begin{question} \label{quotient.conjecture}
If $G$ is generated by reflections in $GL_n(\RR)$ and acts on $P$ as in the paragraph directly above, must $X(P)/G$ be isomorphic with $X(P/G)$?
\end{question}

As mentioned in the introduction, Blume showed in \cite{Blu} that Question \ref{quotient.conjecture} has  positive answers when $P$ is a weight polytope of type $A_n$, $B_n$ or $C_n$, and $G$ is the associated Weyl group with the natural action on the root space. Below, we introduce a couple of more known examples supporting Question~ \ref{quotient.conjecture}. 

Consider the symmetric product $\mathsf{SP}^n(\CP^1)=(\CP^1)^n/\SS_n$, where $\SS_n$ permutes the factors of $(\CP^1)^n$. We regard $(\CP^1)^n$ as the toric variety corresponding to the $n$-dimensional cube, say $[0,1]^n$. 
The fundamental region  for the $\SS_n$-action on $[0,1]^n$ given by permuting factors is 
\begin{equation}\label{eq_fund_region_sym_prod}
[0,1]^n/\SS_n = \{(x_1, \dots, x_n) \in [0,1]^n \mid 0\leq x_1 \leq x_2 \leq \cdots \leq x_n\leq 1\},
\end{equation}
which is an $n$-simplex.  Observe that the toric variety $X([0,1]^n/\SS_n)$ is $\CP^n$, which is isomorphic to $\mathsf{SP}^n(\CP^1)$, see \cite[Corollary 2.6]{Maa} for instance. 

Another example is the weighted projective space $\CP^{n-1}_{(1,2, \dots, n)}=S^{2n-1}/S^1$, where $S^1$-action on $S^{2n-1}\subset \CC^n$ is given by 
\[
t \cdot (z_1, \dots, z_n) = (t^{1}z_1, \dots, t^{n}z_n),\quad \text{for } t\in S^1,~ (z_1, \dots, z_n) \in S^{2n-1}.
\]
This can be constructed as the quotient of $\CP^{n-1}$ by the $\SS_n$-action permuting coordinates as well. Indeed, the quotient morphism $\CP^{n-1} \to \CP^{n-1}_{(1,2,\dots , n)}$ defined by 
\[
[z_1;\cdots ; z_n] \mapsto [e_1(z_1, \dots, z_n) ; \cdots; e_n(z_1, \dots, z_n)],
\]
where $e_i(z_1, \dots, z_n)$ is the elementary symmetric function, induces an isomorphism between $\CP^{n-1}/\SS_n$ and $\CP^{n-1}_{(1,2 \dots, n)}$. We regard $\CP^{n-1}$ as the toric variety corresponding to the standard simplex 
\[
\Delta^{n-1}= \left\{ (x_1, \dots, x_n)\in \RR^n \mid  x_1\geq 0, \dots, x_n\geq 0 \text{ and } x_1+\cdots + x_n=1\right\}. 
\]
The canonical $\SS_n$-action on $\Delta^{n-1}$ given by permuting coordinates defines  
\[
\Delta^{n-1}/\SS_n = \{ (x_1, \dots, x_n)\in \Delta^{n-1} \mid x_1\leq x_2\leq \cdots \leq  x_n\},
\]
whose associated toric variety is $\CP^{n-1}_{(1,2, \dots, n)}$.

\begin{remark}
\begin{itemize}
\item[(a)] Question \ref{quotient.conjecture} has a negative answer if we allow arbitrary finite group actions.  Given $n \geq 2$, consider the toric variety $X\colonequals{\mathbb P}^n \times {\mathbb P}^n$ and let $G \cong {\mathbb Z}_2$ act on $X$, with generator sending $(x,y) \in X$ to $(y,x)$.  The betti numbers of $X/G$ satisfy $\beta_2(X/G)=1$ and $\beta_4(X/G)=2$.  Thus the rational cohomology ring of $X/G$ is not generated in degree two, and $X/G$ is not a toric variety by Theorem \ref{theorem_cohom_toric_var}. 
\item[(b)] Let $T$ be a dense torus in $X(P)$ with free action on $T$ extending to action on $X(P)$.  This action on $X(P)$ does not in general descend to an action on $X(P)/G$, since the actions of $T$ and $G$ on $X(P)$ need not commute.
\end{itemize}
\end{remark}

\subsection{Other Coxeter groups} We believe that the following question has a positive answer.

\begin{question} \label{arbitrary.crystallographic.root}
Let $\Phi$ be an arbitrary crystallographic root system, with associated Weyl group $W$.  Does the conclusion of Theorem \ref{main} hold for $X_\Phi$? 
\end{question}

The crystallographic condition allows the construction of the variety $X_\Phi$ from the fan determined by $\Phi$.  However, we can construct a ring with presentation as given in Theorem \ref{theorem_cohom_toric_var} from an arbitrary root system.  This ring, which we call $R_\Phi$, admits an action of the associated finite reflection group $W$.

\begin{question} \label{arbitrary.root}
Let $\Phi$ be an arbitrary root system. Does the conclusion of Theorem \ref{main} hold for the action of $W$ on $R_\Phi$?
\end{question}

\subsection{Regular semisimple Hessenberg varieties and actions on moment graphs} As discussed in Section \ref{sec_hess}, the representation of $W$ on $H^\ast(X_\Phi)$ is the same as the dot representation on $H^\ast(\Hess(s,\h_1))$, with $s$ regular semisimple. The ring $H^\ast(\Hess(s,\h))$ admits a dot representation for every regular semisimple $s$ and every Hessenberg space $\h$.  It is reasonable to hope that some analogue of Theorem \ref{main} holds in this more general setting.

\begin{problem}
\begin{enumerate}
\item Find an appropriate definition of a quotient moment graph, so that $H^\ast(\Hess(s,\h))^{W_K}$ is isomorphic to a ring of functions from the quotient moment graph to a polynomial ring.
\item Assuming (1) is solved, find a ``nice" variety with equivariantly formal torus action giving rise to a given quotient moment graph.
\end{enumerate}
\end{problem}

\appendix
\section{Types $B$ and $C$}\label{sec_typeB} 

There is no need to distinguish between type $B_n$ and type $C_n$ for Theorem~\ref{main} because the two root systems have the same Weyl group and their simple roots differ by a scalar multiplication.  Moreover, the proof of Theorem~\ref{main} for type $A_{n-1}$ works for these types with suitable modifications.  In this section, we treat type $B_n$ and point out necessary modifications.   
In what follows, we use the notation
\begin{equation*}\label{notation_nbar}
\bar i \colonequals 2n+1-i \quad \text{for $1\le i\le 2n$}\quad\text{and}\quad \overline{[n]}\colonequals \{ \bar1, \dots, \bar n\}.
\end{equation*}

\subsection{Weight polytope of type $B_n$}\label{subsec_B_weight_poly}

Let $E$ be the $n$-dimensional subspace of $\RR^{2n}$ defined by 
\begin{equation}\label{eq_B_vector_sp}
E=\{(x_1,\ldots,x_n,x_{\bar n},\ldots,x_{\bar 1}) \in \RR^{2n} \mid x_i+x_{\bar i}=0 \ \textrm{for all} \ i \in [n] \}.
\end{equation}
Let $\Phi_{B_n}$ be the root system of type $B_n$ in $E$ generated by simple roots  
\begin{equation*}
\alpha_i=\begin{cases} t_i-t_{i+1} & \text{for } 1\leq i \leq n-1;\\
t_n & \text{for } i=n,
\end{cases}
\end{equation*}
where $t_i \in E$ has $1$ as its $i$-th coordinate, $-1$ as its $\bar i$-th coordinate, and all other coordinates $0$. 
The Weyl group $W_{B_n}$ of type $B_n$ is the group of \emph{signed permutations} on the set $[2n]=[n]\sqcup \nbar$, that is 
\begin{align*}
W_{B_n}=\{u \in \SS_{2n} \mid u(\bar{i})=\overline{u(i)} \textrm{ for all }  i \in [n] \}.
\end{align*}
This group acts on $\RR^{2n}$ by permuting coordinates and is generated by $s_1, \ldots, s_n$ where 
\begin{align*}
s_i&=(i, i+1)(\overline{i}, \overline{i+1}) \text{ for } 1 \leq i \leq n-1;\\
s_n&=(n, \overline{n}).
\end{align*}

Take a point $(a_1, \dots, a_n, a_{\bar n}, \dots, a_{\bar1})\in E$ with $a_1<\cdots<a_n<0$ and define the weight polytope of type $B_n$ by
\begin{equation*}
P_{B_n} \colonequals \text{\rm conv}\{ (a_{u(1)},\dots,a_{u(n)}, a_{u(\bar{n})},\cdots,a_{u(\bar 1)})\in E \mid u\in W_{B_n} \}.
\end{equation*}
This polytope is $n$-dimensional and sits in the $n$-dimensional vector space $E$. We describe 
$P_{B_2}$ and $P_{B_3}$ in Figure \ref{fig_weight_Poly_dim2_3TypeB}, where
the vertex $(a_{u(1)},\dots,a_{u(n)}, a_{u(\bar{n})},\dots,a_{u(\bar1)})$ is labeled by the first half of the one-line notation of $u\in W_{B_n}$. For instance, the vertex $(a_{\bar2},a_{\bar3},a_1,a_{\bar1},a_3,a_2)$ is labeled by 
$\bar2\bar3 1$.

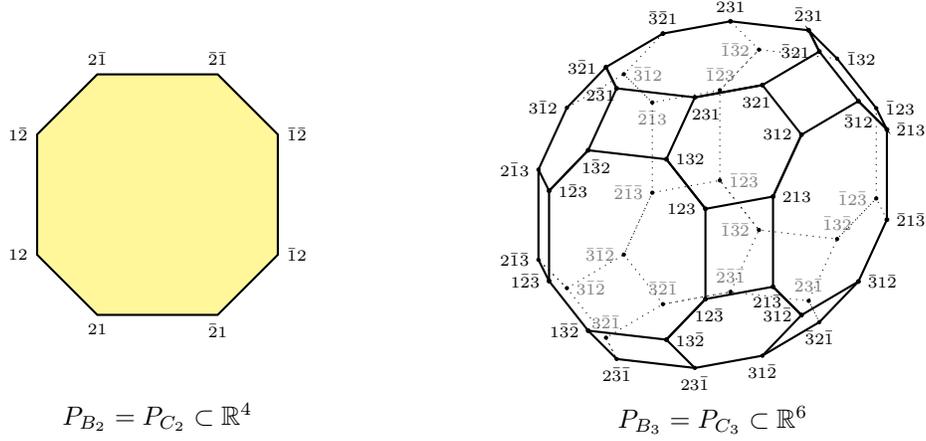
\begin{figure}
\begin{tikzpicture}
\begin{scope}[scale=0.8]
\draw[thick, fill=yellow!50] (-1,-2)--(1,-2)--(2,-1)--(2,1)--(1,2)--(-1,2)--(-2,1)--(-2,-1)--cycle;

\node[below] at (-1,-2) {\tiny$21$};
\node[left] at (-2,-1) {\tiny$12$};
\node[left] at (-2,1) {\tiny$1\bar2$};
\node[above] at (-1,2) {\tiny$2\bar1$};
\node[above] at (1,2) {\tiny$\bar2\bar1$};
\node[right] at (2,1) {\tiny$\bar1\bar2$};
\node[right] at (2,-1) {\tiny$\bar1 2$};
\node[below] at (1,-2) {\tiny$\bar21$};

\node at (0,-3.7) {$P_{B_2}=P_{C_2}\subset \RR^4$};
\end{scope}

\begin{scope}[scale=0.6, rotate around x=-90, rotate around z=-105, xshift=350]
\node at (0,0,-5) {$P_{B_3}=P_{C_3}\subset \RR^6$};


\draw (2,-3,1)--(3,-2,1)--(3,-2,-1)--(2,-3,-1)--cycle;

\draw[fill] (3,1,2) circle (1.2pt); 	\node[right] at (3,1,2) {\tiny$132$};
\draw[fill] (3,-1,2) circle (1.2pt); 	\node[below] at (3,-1+0.2,2) {\tiny$1\bar3 2$};
\draw[fill] (3,-2,1) circle (1.2pt); 	\node[right] at (3,-2,1) {\tiny$1\bar2 3$};
\draw[fill] (3,-2,-1) circle (1.2pt); \node[left] at (3,-2,-1) {\tiny$1 \bar2 \bar3$};
\draw[fill] (3,-1,-2) circle (1.2pt); \node[left] at (3,-1,-2) {\tiny$1 \bar3 \bar2$};
\draw[fill] (3,1,-2) circle (1.2pt); 	\node[right] at (3,1,-2) {\tiny$13\bar2$};
\draw[fill] (3,2,-1) circle (1.2pt); 	\node[below] at (3,2+0.1,-1) {\tiny$12\bar3$};
\draw[fill] (3,2,1) circle (1.2pt); 	\node[left] at (3,2,1) {\tiny$123$};

\draw[thick] (3,1,2)--(3,-1,2)--(3,-2,1)--(3,-2,-1)--(3,-1,-2)--(3,1,-2)--(3,2,-1)--(3,2,1)--cycle;

\draw[fill] (-3,1,2) circle (1pt); 	\node[right] at (-3,1,2) {\tiny$\bar1 32$};
\draw[fill] (-3,-1,2) circle (1pt); 	\node[left, gray] at (-3,-1,2) {\tiny$\bar1 \bar3 2$};
\draw[fill] (-3,-2,1) circle (1pt); 	\node[above, gray] at (-3,-2,1) {\tiny$\bar1 \bar2 3$};
\draw[fill] (-3,-2,-1) circle (1pt); 	\node[right,gray] at (-3,-2,-1) {\tiny$\bar1 \bar2 \bar3$};
\draw[fill] (-3,-1,-2) circle (1pt); 	\node[left, gray] at (-3,-1,-2) {\tiny$\bar1 \bar3 \bar2$};
\draw[fill] (-3,1,-2) circle (1pt); 	\node[above, gray] at (-3,1,-2) {\tiny$\bar1 3 \bar2$};
\draw[fill] (-3,2,-1) circle (1pt); 	\node[left, gray] at (-3,2,-1) {\tiny$\bar1 2 \bar3$};
\draw[fill] (-3,2,1) circle (1pt); 	\node[right] at (-3,2,1) {\tiny$\bar1 23$};

\draw[dotted] (-3,1,2)--(-3,-1,2)--(-3,-2,1)--(-3,-2,-1)--(-3,-1,-2)--(-3,1,-2)--(-3,2,-1)--(-3,2,1)--cycle;

\draw[fill] (1,3,2) circle (1.2pt);	\node[left] at (1,3,2) {\tiny$312$};
\draw[fill] (-1,3,2) circle (1.2pt); 	\node[below] at (-1,3,2-0.1) {\tiny$\bar3 12$};
\draw[fill] (-2,3,1) circle (1.2pt); 	\node[right] at (-2,3,1) {\tiny$\bar2 13$};
\draw[fill] (-2,3,-1) circle (1.2pt); \node[right] at (-2,3,-1) {\tiny$\bar2 1 \bar3 $};
\draw[fill] (-1,3,-2) circle (1.2pt);	\node[right] at (-1,3,-2) {\tiny$\bar3 1 \bar2$};
\draw[fill] (1,3,-2) circle (1.2pt);	\node[left] at (1,3,-2) {\tiny$31\bar2$}; 
\draw[fill] (2,3,-1) circle (1.2pt); 	\node[below] at (2+0.2,3,-1+0.1) {\tiny$21\bar3$};
\draw[fill] (2,3,1) circle (1.2pt); 	\node[right] at (2,3,1) {\tiny$213$};

\draw[thick] (1,3,2)--(-1,3,2)--(-2,3,1)--(-2,3,-1)--(-1,3,-2)--(1,3,-2)--(2,3,-1)--(2,3,1)--cycle;

\draw[fill] (1,-3,2) circle (1pt); 	\node[left] at (1,-3,2) {\tiny$3\bar1 2$};
\draw[fill] (-1,-3,2) circle (1pt); 	\node[right, gray] at (-1,-3,2) {\tiny$\bar3 \bar1 2$};
\draw[fill] (-2,-3,1) circle (1pt); 	\node[below, gray] at (-2,-3,1) {\tiny$\bar2 \bar1 3$};
\draw[fill] (-2,-3,-1) circle (1pt); 	\node[left, gray] at (-2,-3,-1) {\tiny$\bar2 \bar1 \bar3$};
\draw[fill] (-1,-3,-2) circle (1pt); 	\node[left, gray] at (-1,-3,-2) {\tiny$\bar3 \bar1 \bar2$};
\draw[fill] (1,-3,-2) circle (1pt); 	\node[right, gray] at (1,-3,-2) {\tiny$3\bar1 \bar2$};
\draw[fill] (2,-3,-1) circle (1pt); 	\node[left] at (2,-3,-1) {\tiny$2 \bar1 \bar3$};
\draw[fill] (2,-3,1) circle (1pt); 	\node[left] at (2,-3,1) {\tiny$2 \bar1 3$};

\draw[dotted] (1,-3,2)--(-1,-3,2)--(-2,-3,1)--(-2,-3,-1)--(-1,-3,-2)--(1,-3,-2)--(2,-3,-1)--(2,-3,1)--cycle;

\draw[fill] (1,2,3) circle (1.2pt);	\node[below] at (1.2,2,3) {\tiny$321$};
\draw[fill] (1,-2,3) circle (1.2pt); 	\node[left] at (1,-2,3) {\tiny$3\bar 2 1$};
\draw[fill] (-1,2,3) circle (1.2pt); 	\node[left] at (-1,2,3) {\tiny$\bar3 2 1$};
\draw[fill] (-1,-2,3) circle (1.2pt); \node[above] at (-1,-2,3) {\tiny$\bar3 \bar2 1$};
\draw[fill] (2,1,3) circle (1.2pt); 	\node[below] at (2-0.4,1,3-0.2) {\tiny$231$};
\draw[fill] (2,-1,3) circle (1.2pt); 	\node[left] at (2,-1+0.2,3-0.12) {\tiny$2 \bar3 1$};
\draw[fill] (-2,1,3) circle (1.2pt); 	\node[above] at (-2,1,3) {\tiny$\bar2 31$};
\draw[fill] (-2,-1,3) circle (1.2pt);	\node[above] at (-2,-1,3) {\tiny$\bar2 \bar3 1$};

\draw[thick] (1,2,3)--(-1,2,3)--(-2,1,3)--(-2,-1,3)--(-1,-2,3)--(1,-2,3)--(2,-1,3)--(2,1,3)--cycle;

\draw[fill] (1,2,-3) circle (1pt); 	\node[below] at (1,2,-3) {\tiny$31\bar2$};
\draw[fill] (1,-2,-3) circle (1pt); 	\node[above,gray] at (1,-2,-3) {\tiny$3\bar2\bar1$};
\draw[fill] (-1,2,-3) circle (1pt); 	\node[below] at (-1,2,-3) {\tiny$\bar3 2\bar1$};
\draw[fill] (-1,-2,-3) circle (1pt); 	\node[above, gray] at (-1,-2,-3) {\tiny$\bar3 \bar 2\bar1$};
\draw[fill] (2,1,-3) circle (1pt);  	\node[below] at (2,1,-3) {\tiny$23 \bar1$};
\draw[fill] (2,-1,-3) circle (1pt); 	\node[below] at (2,-1,-3) {\tiny$2 \bar3 \bar1$};
\draw[fill] (-2,1,-3) circle (1pt); 	\node[above, gray] at (-2,1,-3) {\tiny$\bar2 3 \bar1$};
\draw[fill] (-2,-1,-3) circle (1pt);  	\node[above, gray] at (-2,-1,-3) {\tiny$\bar 2 \bar3 \bar1$};

\draw[dotted] (1,2,-3)--(-1,2,-3)--(-2,1,-3)--(-2,-1,-3)--(-1,-2,-3)--(1,-2,-3)--(2,-1,-3)--(2,1,-3)--cycle;

\draw[thick] (1,2,3)--(2,1,3)--(3,1,2)--(3,2,1)--(2,3,1)--(1,3,2)--cycle;

\draw[thick] (-1,2,3)--(-2,1,3)--(-3,1,2)--(-3,2,1)--(-2,3,1)--(-1,3,2)--cycle;

\draw[dotted] (-1,2,-3)--(-2,1,-3)--(-3,1,-2)--(-3,2,-1)--(-2,3,-1)--(-1,3,-2)--cycle;

\draw[thick] (1,-2,3)--(2,-1,3)--(3,-1,2)--(3,-2,1)--(2,-3,1)--(1,-3,2)--cycle;

\draw[dotted] (1,-2,-3)--(2,-1,-3)--(3,-1,-2)--(3,-2,-1)--(2,-3,-1)--(1,-3,-2)--cycle;

\draw[dotted] (-1,-2,-3)--(-2,-1,-3)--(-3,-1,-2)--(-3,-2,-1)--(-2,-3,-1)--(-1,-3,-2)--cycle;

\draw[thick] (1,2,-3)--(2,1,-3)--(3,1,-2)--(3,2,-1)--(2,3,-1)--(1,3,-2)--cycle;

\draw[dotted] (-1,-2,3)--(-2,-1,3)--(-3,-1,2)--(-3,-2,1)--(-2,-3,1)--(-1,-3,2)--cycle;

\draw[thick] (2,-3,1)--(2,-3,-1)--(3,-2,-1);
\draw[thick] (3,-1,-2)--(2,-1,-3)--(2,1,-3);
\draw[thick] (1,2,-3)--(-1,2,-3)--(-1,3,-2);
\end{scope}

\end{tikzpicture}
\caption{Weight polytopes $P_{B_n}$ of dimension $2$ and $3$.}
\label{fig_weight_Poly_dim2_3TypeB}
\end{figure}

\begin{definition}\label{definition:signedsubset}
A nonempty subset $\indI$ of $[n]\sqcup \overline{[n]}$ is said to be \textit{signed} if 
$| I \cap \{i, \bar{i}\} |\leq 1$ for all $i\in [n]$. We denote by $\M_{B_n}$ the set of all signed subsets of $[n]\sqcup \overline{[n]}$. 
\end{definition}

Every facet of $P_{B_n}$ is of the form 
\begin{equation}\label{eq_B_facet_convex_hull}
F(\indI)\colonequals \left\{(x_1,\dots,x_n,x_{\bar n},\dots,x_{\bar 1})\in P_{B_n} \mid \sum_{i\in I} \x_i=\sum_{i\in [|I|]} a_i\right\}  
\end{equation}
with $\indI\in\M_{B_n}$, and every such $F(\indI)$ is a facet. An (inward) normal vector to $F(I)$ in $E$, regarded as an element of the space $E^*$ dual to $E$, is given by $e_I\colonequals\sum_{i\in I}e_i$, where $\{e_1,\dots,e_n\}$ is the basis of $E^*$ dual to the basis $\{t_1,\dots,t_n\}$ of $E$ and $e_{\bar{i}}$ for $i\in [n]$ is defined to be $-e_i$.  
With this understood, Proposition~\ref{prop:A_cohom_of_perm} holds for type $B_n$. 

\begin{remark}
The weight polytope $P_{B_n}$ is sometimes called a \emph{truncated cuboctahedron}. It is combinatorially equivalent to the polytope obtained by truncating all proper faces of the $n$-cube $I^n$. The correspondence between facets of $P_{B_n}$ and $\M_{B_n}$ given in \eqref{eq_B_facet_convex_hull} agrees with the labels on faces of $I^n$ given by $i$ and $\bar i$ on each pair of opposite facets of $I^n$ and their natural extension to faces according to the intersection relation. See Figure \ref{fig_labeling_on_PBn} for the case when $n=2,3$. 
\end{remark}

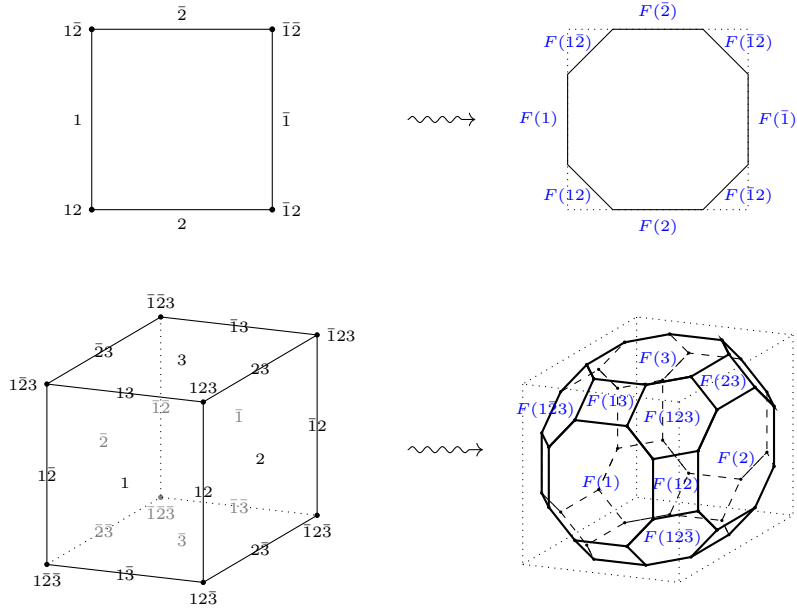
\begin{figure}
\begin{tikzpicture}
\begin{scope}[yshift=125, scale=0.6]
\draw(-2,-2)--(-2,2)--(2,2)--(2,-2)--cycle;

\draw[fill] (-2,-2) circle (1.5pt);
\draw[fill] (2,-2) circle (1.5pt);
\draw[fill] (-2,2) circle (1.5pt);
\draw[fill] (2,2) circle (1.5pt);

\node[left] at (-2,-2) {\tiny$12$};
\node[left] at (-2,2) {\tiny$1\bar2$};
\node[right] at (2,2) {\tiny$\bar1\bar2$};
\node[right] at (2,-2) {\tiny$\bar12$};

\node[left] at (-2,0) {\tiny$1$};
\node[right] at (2,0) {\tiny$\bar1$};
\node[below] at (0,-2) {\tiny$2$};
\node[above] at (0,2) {\tiny$\bar2$};

\draw[->,snake=snake, segment amplitude=.4mm, segment length=2mm, line after snake=1mm] (5,0)--(6.5,0);

\begin{scope}[xshift=300]
\draw[dotted] (-2,-2)--(-2,2)--(2,2)--(2,-2)--cycle;
\draw (-1,-2)--(1,-2)--(2,-1)--(2,1)--(1,2)--(-1,2)--(-2,1)--(-2,-1)--cycle;

\node[blue] at (-2,-1.7) {\tiny$F(12)$};
\node[blue] at (-2,1.7) {\tiny$F(1\bar2)$};
\node[blue] at (2,1.7) {\tiny$F(\bar1\bar2)$};
\node[blue] at (2,-1.7) {\tiny$F(\bar12)$};

\node[blue, left] at (-2,0) {\tiny$F(1)$};
\node[blue, right] at (2,0) {\tiny$F(\bar1)$};
\node[blue, below] at (0,-2) {\tiny$F(2)$};
\node[blue, above] at (0,2) {\tiny$F(\bar2)$};
\end{scope}

\end{scope}

\begin{scope}
\draw[->,snake=snake, segment amplitude=.4mm, segment length=2mm, line after snake=1mm] (3,0)--(4,0);

\begin{scope}[scale=0.4, rotate around x=-90, rotate around z=-105]
\draw[fill] (3,3,3) circle (2pt); 	\node[above] at (3,3,3) {\tiny$123$};
\draw[fill] (3,-3,3) circle (2pt); 	\node[left] at (3,-3,3) {\tiny$1\bar2 3$};
\draw[fill] (-3,-3,3) circle (2pt); 	\node[above] at (-3,-3,3) {\tiny$\bar1 \bar2 3$};
\draw[fill] (-3,3,3) circle (2pt); 	\node[right] at (-3,3,3) {\tiny$\bar1 2 3$};
\draw[fill] (3,3,-3) circle (2pt); 	\node[below] at (3,3,-3) {\tiny$12\bar3$};
\draw[fill] (-3,3,-3) circle (2pt); 	\node[below] at (-3,3,-3) {\tiny$\bar12\bar3$};
\draw[fill] (3,-3,-3) circle (2pt); 	\node[below] at (3,-3,-3) {\tiny$1\bar2\bar3$};
\draw[fill=gray,gray] (-3,-3,-3) circle (2pt); 	\node[below, gray] at (-3,-3,-3) {\tiny$\bar1\bar2\bar3$};

\draw (3,3,3)--(-3,3,3)--(-3,-3,3)--(3,-3,3)--cycle;
\draw (3,-3,-3)--(3,3,-3)--(-3,3,-3);
\draw[dotted] (3,-3,-3)--(-3,-3,-3)--(-3,3,-3);

\draw (3,3,3)--(3,3,-3);
\draw (-3,3,3)--(-3,3,-3);
\draw (3,-3,3)--(3,-3,-3);
\draw[dotted] (-3,-3,3)--(-3,-3,-3);

\node at (3,3,0) {\tiny$12$};
\node at (3,0,3) {\tiny$13$};
\node at (0,3,3) {\tiny$23$};
\node at (-3,3,0) {\tiny$\bar1 2$};
\node at (-3,0,3) {\tiny$\bar1 3$};
\node at (0,-3,3) {\tiny$\bar2 3$};
\node at (3,-3,0) {\tiny$1\bar2$};
\node at (3,0,-3) {\tiny$1\bar3$};
\node at (0,3,-3) {\tiny$2\bar3$};
\node[gray] at (-3,-3,0) {\tiny$\bar1\bar2$};
\node[gray] at (-3,0,-3) {\tiny$\bar1\bar3$};
\node[gray] at (0,-3,-3) {\tiny$\bar2\bar3$};

\node at (3,0,0) {\tiny$1$};
\node at (0,3,0) {\tiny$2$};
\node at (0,0,3) {\tiny$3$};
\node[gray] at (-3,0,0) {\tiny$\bar1$};
\node[gray] at (0,-3,0) {\tiny$\bar2$};
\node[gray] at (0,0,-3) {\tiny$\bar3$};


\begin{scope}[xshift=450]

\draw[dotted] (3,3,3)--(-3,3,3)--(-3,-3,3)--(3,-3,3)--cycle;
\draw[dotted] (3,-3,-3)--(3,3,-3)--(-3,3,-3);
\draw[dotted] (3,-3,-3)--(-3,-3,-3)--(-3,3,-3);

\draw[dotted] (3,3,3)--(3,3,-3);
\draw[dotted] (-3,3,3)--(-3,3,-3);
\draw[dotted] (3,-3,3)--(3,-3,-3);
\draw[dotted] (-3,-3,3)--(-3,-3,-3);

\node[blue] at (2,2,2) {\tiny$F(123)$};
\node[blue] at (2.5,2.5,0) {\tiny$F(12)$};
\node[blue] at (2,2,-2) {\tiny$F(12\bar3)$};
\node[blue] at (2.5,0,2.5) {\tiny$F(13)$};
\node[blue] at (0,2.5,2.5) {\tiny$F(23)$};

\draw[fill] (1,2,-3) circle (1pt); 	
\draw[fill] (1,-2,-3) circle (1pt); 	
\draw[fill] (-1,2,-3) circle (1pt);	
\draw[fill] (-1,-2,-3) circle (1pt);	
\draw[fill] (2,1,-3) circle (1pt);  	
\draw[fill] (2,-1,-3) circle (1pt); 	
\draw[fill] (-2,1,-3) circle (1pt); 	
\draw[fill] (-2,-1,-3) circle (1pt);  

\draw[dashed] (1,2,-3)--(-1,2,-3)--(-2,1,-3)--(-2,-1,-3)--(-1,-2,-3)--(1,-2,-3)--(2,-1,-3)--(2,1,-3)--cycle;

\draw[fill] (1,2,3) circle (1.2pt);	
\draw[fill] (1,-2,3) circle (1.2pt); 	
\draw[fill] (-1,2,3) circle (1.2pt); 	
\draw[fill] (-1,-2,3) circle (1.2pt); 
\draw[fill] (2,1,3) circle (1.2pt); 	
\draw[fill] (2,-1,3) circle (1.2pt); 	
\draw[fill] (-2,1,3) circle (1.2pt); 	
\draw[fill] (-2,-1,3) circle (1.2pt);	

\draw[thick] (1,2,3)--(-1,2,3)--(-2,1,3)--(-2,-1,3)--(-1,-2,3)--(1,-2,3)--(2,-1,3)--(2,1,3)--cycle;
\node[blue] at (0,0,3) {\tiny$F(3)$};

\draw[fill] (-3,1,2) circle (1pt); 	
\draw[fill] (-3,-1,2) circle (1pt); 	
\draw[fill] (-3,-2,1) circle (1pt); 	
\draw[fill] (-3,-2,-1) circle (1pt); 	
\draw[fill] (-3,-1,-2) circle (1pt); 	
\draw[fill] (-3,1,-2) circle (1pt); 	
\draw[fill] (-3,2,-1) circle (1pt); 	
\draw[fill] (-3,2,1) circle (1pt); 	

\draw[dashed] (-3,1,2)--(-3,-1,2)--(-3,-2,1)--(-3,-2,-1)--(-3,-1,-2)--(-3,1,-2)--(-3,2,-1)--(-3,2,1)--cycle;

\draw[fill] (3,1,2) circle (1.2pt); 	
\draw[fill] (3,-1,2) circle (1.2pt); 	
\draw[fill] (3,-2,1) circle (1.2pt); 	
\draw[fill] (3,-2,-1) circle (1.2pt); 
\draw[fill] (3,-1,-2) circle (1.2pt); 
\draw[fill] (3,1,-2) circle (1.2pt); 	
\draw[fill] (3,2,-1) circle (1.2pt); 	
\draw[fill] (3,2,1) circle (1.2pt); 	

\draw[thick] (3,1,2)--(3,-1,2)--(3,-2,1)--(3,-2,-1)--(3,-1,-2)--(3,1,-2)--(3,2,-1)--(3,2,1)--cycle;
\node[blue] at (3,0,0) {\tiny$F(1)$};

\draw[fill] (1,-3,2) circle (1pt); 	
\draw[fill] (-1,-3,2) circle (1pt); 	
\draw[fill] (-2,-3,1) circle (1pt); 	
\draw[fill] (-2,-3,-1) circle (1pt); 	
\draw[fill] (-1,-3,-2) circle (1pt); 	
\draw[fill] (1,-3,-2) circle (1pt); 	
\draw[fill] (2,-3,-1) circle (1pt); 	
\draw[fill] (2,-3,1) circle (1pt); 	

\draw[dashed] (1,-3,2)--(-1,-3,2)--(-2,-3,1)--(-2,-3,-1)--(-1,-3,-2)--(1,-3,-2)--(2,-3,-1)--(2,-3,1)--cycle;

\draw[fill] (1,3,2) circle (1.2pt);	
\draw[fill] (-1,3,2) circle (1.2pt); 	
\draw[fill] (-2,3,1) circle (1.2pt); 	
\draw[fill] (-2,3,-1) circle (1.2pt); 
\draw[fill] (-1,3,-2) circle (1.2pt);	
\draw[fill] (1,3,-2) circle (1.2pt);	
\draw[fill] (2,3,-1) circle (1.2pt); 	
\draw[fill] (2,3,1) circle (1.2pt); 	

\draw[thick] (1,3,2)--(-1,3,2)--(-2,3,1)--(-2,3,-1)--(-1,3,-2)--(1,3,-2)--(2,3,-1)--(2,3,1)--cycle;
\node[blue] at (0,3,0) {\tiny$F(2)$};

\draw[thick] (1,2,3)--(2,1,3)--(3,1,2)--(3,2,1)--(2,3,1)--(1,3,2)--cycle;
\draw[thick] (-1,2,3)--(-2,1,3)--(-3,1,2)--(-3,2,1)--(-2,3,1)--(-1,3,2)--cycle;
\draw[dashed] (-1,2,-3)--(-2,1,-3)--(-3,1,-2)--(-3,2,-1)--(-2,3,-1)--(-1,3,-2)--cycle;
\draw[thick] (1,-2,3)--(2,-1,3)--(3,-1,2)--(3,-2,1)--(2,-3,1)--(1,-3,2)--cycle;
\node[blue] at (2.5,-2.5,2 ){\tiny$F(1\bar23)$};
\draw[dashed] (1,-2,-3)--(2,-1,-3)--(3,-1,-2)--(3,-2,-1)--(2,-3,-1)--(1,-3,-2)--cycle;
\draw[dashed] (-1,-2,-3)--(-2,-1,-3)--(-3,-1,-2)--(-3,-2,-1)--(-2,-3,-1)--(-1,-3,-2)--cycle;
\draw[thick] (1,2,-3)--(2,1,-3)--(3,1,-2)--(3,2,-1)--(2,3,-1)--(1,3,-2)--cycle;
\draw[dashed] (-1,-2,3)--(-2,-1,3)--(-3,-1,2)--(-3,-2,1)--(-2,-3,1)--(-1,-3,2)--cycle;

\draw[thick] (2,-3,1)--(2,-3,-1)--(3,-2,-1);
\draw[thick] (3,-1,-2)--(2,-1,-3)--(2,1,-3);
\draw[thick] (1,2,-3)--(-1,2,-3)--(-1,3,-2);
\end{scope}

\end{scope}
\end{scope}

\end{tikzpicture}
\caption{Labeling on facets of $P_{B_n}$ for $n=2,3$ and their relationship with labeling on faces of $I^n$ }
\label{fig_labeling_on_PBn}
\end{figure}

Let $K$ be a subset of $[n]$ and $W_K$ the subgroup of $W_{B_n}$ generated by $\{s_k:k\in K\}$.  Then, the $W_K$-orbit decomposition of $[n]\sqcup\nbar$ is of the following form:
\begin{equation} \label{eq:B_orbit_decomposition}
[n]\sqcup \nbar= \bigsqcup_{i=1}^{m}(N_i \sqcup \overline{N_i}) \sqcup N',
\end{equation} 
where $m$ is some non-negative integer, $N_i \subset [n]$ for each $i$, $N'=\emptyset$ if $n\notin K$, and $\{n, \bar n \} \subset N' = \overline{N'}$ if $n\in K$. Here 
\begin{equation*}\label{notation_overline[N]}
\overline{N}\colonequals \{\bar i \mid i\in N\}
\end{equation*}
for $N\subset [n]\sqcup \nbar$. 

\begin{example}\label{ex_B_3_orbit_decomp}
Let $n=3$.  
\begin{enumerate}
\item If $K=\{1\}$, then $[3]\sqcup \overline{[3]}=\{1,2\} \sqcup \{\bar2, \bar1\}\sqcup  \{3\}\sqcup \{\bar 3\}.$
\item If $K=\{1,2\}$, then $[3]\sqcup \overline{[3]}=\{1,2,3\} \sqcup \{\bar3, \bar2, \bar1\}.$
\item If $K=\{2,3\}$, then $[3]\sqcup \overline{[3]}=\{1\}\sqcup \{\bar 1\} \sqcup \{2,3,\bar3, \bar2\}$. 
\end{enumerate}
\end{example}

With this understood, it is straightforward to prove Lemma \ref{lem_invariants} for type $B_n$ and Proposition \ref{prop_A_generated_by_deg2} holds with $\An$, $\SS_K$ and $\mathfrak{F}_{A_{n-1}}$ replaced by $B_n$, $W_K$ and $\mathfrak{F}_{B_n}$ respectively.

\subsection{Partitioned weight polytope of type $B_n$}\label{subsec_B_part_weight_poly}

If $\Phi$ is of type $B_n$, then, for every $k \in [n]$, the half-space $\Hk^\leq$ satisfies   
\begin{align*} \label{eq_Hk}
\Hk^{\leq}& = \left\{ x \in E \mid x_k \leq x_{k+1}\right\}, \quad \text{for }  1 \leq k \leq n-1;\\
\Hn^{\leq} &= \left\{ x \in E ~\Big|~ x_n \leq 0 \right\}, 
\end{align*}
and the partitioned permutohedron $P_{B_n}(K)$ associated with a subset $K\subset[n]$ is defined by  
\[
P_{B_n}(K)\colonequals P_{B_n}\cap \bigcap_{k\in K}H(k)^\leq.
\] 
See Figure \ref{fig_part_weight_poly_B} for several examples in dimension 3.

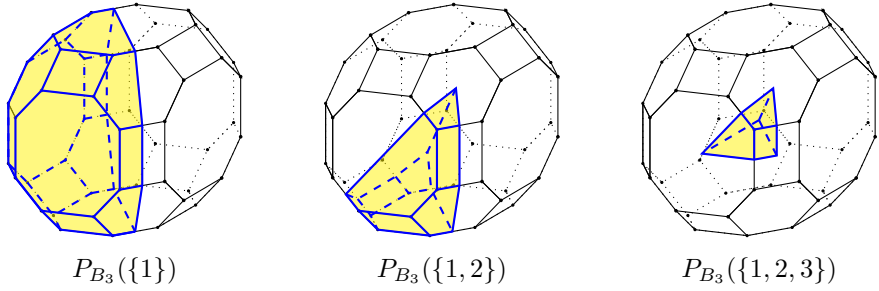
\begin{figure}
\begin{tikzpicture}

\begin{scope}[scale=0.4, rotate around x=-90, rotate around z=-105]

\coordinate (132) at (3,1,2) ;
\coordinate (1b32) at (3,-1,2) ;
\coordinate (1b23) at (3,-2,1) ;
\coordinate (1b2b3) at (3,-2,-1) ;
\coordinate (1b3b2) at (3,-1,-2) ;
\coordinate (13b2) at (3,1,-2) ;
\coordinate (12b3) at (3,2,-1) ;
\coordinate (123) at (3,2,1) ;

\coordinate (b132) at (-3,1,2) ;
\coordinate (b1b32) at (-3,-1,2) ;
\coordinate (b1b23) at (-3,-2,1) ;
\coordinate (b1b2b3) at (-3,-2,-1) ;
\coordinate (b1b1b3) at (-3,-1,-2) ;
\coordinate (b13b2) at (-3,1,-2) ;
\coordinate (b12b3) at (-3,2,-1) ;
\coordinate (b123) at (-3,2,1) ;

\coordinate (312) at (1,3,2) ;
\coordinate (b312) at (-1,3,2) ;
\coordinate (b213) at (-2,3,1) ;
\coordinate (b21b3) at (-2,3,-1) ;
\coordinate (b31b2) at (-1,3,-2) ;
\coordinate (31b2) at (1,3,-2) ;
\coordinate (21b3) at (2,3,-1) ;
\coordinate (213) at (2,3,1) ;

\coordinate (3b12) at (1,-3,2) ;
\coordinate (b3b12) at (-1,-3,2) ;
\coordinate (b2b13) at (-2,-3,1) ;
\coordinate (b2b1b3) at (-2,-3,-1) ;
\coordinate (b3b1b2) at (-1,-3,-2) ;
\coordinate (3b1b2) at (1,-3,-2) ;
\coordinate (2b1b3) at (2,-3,-1) ;
\coordinate (2b13) at (2,-3,1) ;

\coordinate (321) at (1,2,3) ;
\coordinate (3b21) at (1,-2,3) ;
\coordinate (b321) at (-1,2,3) ;
\coordinate (b3b21) at (-1,-2,3) ;
\coordinate (231) at (2,1,3) ;
\coordinate (2b31) at (2,-1,3) ;
\coordinate (b231) at (-2,1,3) ;
\coordinate (b2b31) at (-2,-1,3) ;

\coordinate (31b2) at (1,2,-3) ;
\coordinate (3b2b1) at (1,-2,-3) ;
\coordinate (b32b1) at (-1,2,-3) ;
\coordinate (b3b2b1) at (-1,-2,-3) ;
\coordinate (23b1) at (2,1,-3) ;
\coordinate (2b3b1) at (2,-1,-3) ;
\coordinate (b23b1) at (-2,1,-3) ;
\coordinate (b2b3b1) at (-2,-1,-3) ;

\draw (2,-3,1)--(3,-2,1)--(3,-2,-1)--(2,-3,-1)--cycle;

\draw[fill] (3,1,2) circle (1.2pt); 	
\draw[fill] (3,-1,2) circle (1.2pt); 	
\draw[fill] (3,-2,1) circle (1.2pt); 	
\draw[fill] (3,-2,-1) circle (1.2pt); 
\draw[fill] (3,-1,-2) circle (1.2pt); 
\draw[fill] (3,1,-2) circle (1.2pt); 	
\draw[fill] (3,2,-1) circle (1.2pt); 	
\draw[fill] (3,2,1) circle (1.2pt); 	

\draw (3,1,2)--(3,-1,2)--(3,-2,1)--(3,-2,-1)--(3,-1,-2)--(3,1,-2)--(3,2,-1)--(3,2,1)--cycle;

\draw[fill] (-3,1,2) circle (1pt); 	
\draw[fill] (-3,-1,2) circle (1pt); 	
\draw[fill] (-3,-2,1) circle (1pt); 	
\draw[fill] (-3,-2,-1) circle (1pt); 	
\draw[fill] (-3,-1,-2) circle (1pt); 	
\draw[fill] (-3,1,-2) circle (1pt); 	
\draw[fill] (-3,2,-1) circle (1pt); 	
\draw[fill] (-3,2,1) circle (1pt); 	

\draw[dotted] (-3,1,2)--(-3,-1,2)--(-3,-2,1)--(-3,-2,-1)--(-3,-1,-2)--(-3,1,-2)--(-3,2,-1)--(-3,2,1)--cycle;

\draw[fill] (1,3,2) circle (1.2pt);	
\draw[fill] (-1,3,2) circle (1.2pt); 	
\draw[fill] (-2,3,1) circle (1.2pt); 	
\draw[fill] (-2,3,-1) circle (1.2pt); 
\draw[fill] (-1,3,-2) circle (1.2pt);	
\draw[fill] (1,3,-2) circle (1.2pt);	
\draw[fill] (2,3,-1) circle (1.2pt); 	
\draw[fill] (2,3,1) circle (1.2pt); 	

\draw (1,3,2)--(-1,3,2)--(-2,3,1)--(-2,3,-1)--(-1,3,-2)--(1,3,-2)--(2,3,-1)--(2,3,1)--cycle;

\draw[fill] (1,-3,2) circle (1pt); 	
\draw[fill] (-1,-3,2) circle (1pt); 	
\draw[fill] (-2,-3,1) circle (1pt); 	
\draw[fill] (-2,-3,-1) circle (1pt); 	
\draw[fill] (-1,-3,-2) circle (1pt); 	
\draw[fill] (1,-3,-2) circle (1pt); 	
\draw[fill] (2,-3,-1) circle (1pt); 	
\draw[fill] (2,-3,1) circle (1pt); 	

\draw[dotted] (1,-3,2)--(-1,-3,2)--(-2,-3,1)--(-2,-3,-1)--(-1,-3,-2)--(1,-3,-2)--(2,-3,-1)--(2,-3,1)--cycle;

\draw[fill] (1,2,3) circle (1.2pt);	
\draw[fill] (1,-2,3) circle (1.2pt);  
\draw[fill] (-1,2,3) circle (1.2pt); 	
\draw[fill] (-1,-2,3) circle (1.2pt); 
\draw[fill] (2,1,3) circle (1.2pt); 	
\draw[fill] (2,-1,3) circle (1.2pt); 	
\draw[fill] (-2,1,3) circle (1.2pt); 	
\draw[fill] (-2,-1,3) circle (1.2pt);	

\draw (1,2,3)--(-1,2,3)--(-2,1,3)--(-2,-1,3)--(-1,-2,3)--(1,-2,3)--(2,-1,3)--(2,1,3)--cycle;

\draw[fill] (1,2,-3) circle (1pt); 	
\draw[fill] (1,-2,-3) circle (1pt); 	
\draw[fill] (-1,2,-3) circle (1pt); 	
\draw[fill] (-1,-2,-3) circle (1pt); 	
\draw[fill] (2,1,-3) circle (1pt);  	
\draw[fill] (2,-1,-3) circle (1pt); 	
\draw[fill] (-2,1,-3) circle (1pt); 	
\draw[fill] (-2,-1,-3) circle (1pt);  	

\draw[dotted] (1,2,-3)--(-1,2,-3)--(-2,1,-3)--(-2,-1,-3)--(-1,-2,-3)--(1,-2,-3)--(2,-1,-3)--(2,1,-3)--cycle;

\draw (1,2,3)--(2,1,3)--(3,1,2)--(3,2,1)--(2,3,1)--(1,3,2)--cycle;

\draw (-1,2,3)--(-2,1,3)--(-3,1,2)--(-3,2,1)--(-2,3,1)--(-1,3,2)--cycle;

\draw[dotted] (-1,2,-3)--(-2,1,-3)--(-3,1,-2)--(-3,2,-1)--(-2,3,-1)--(-1,3,-2)--cycle;

\draw (1,-2,3)--(2,-1,3)--(3,-1,2)--(3,-2,1)--(2,-3,1)--(1,-3,2)--cycle;

\draw[dotted] (1,-2,-3)--(2,-1,-3)--(3,-1,-2)--(3,-2,-1)--(2,-3,-1)--(1,-3,-2)--cycle;

\draw[dotted] (-1,-2,-3)--(-2,-1,-3)--(-3,-1,-2)--(-3,-2,-1)--(-2,-3,-1)--(-1,-3,-2)--cycle;

\draw (1,2,-3)--(2,1,-3)--(3,1,-2)--(3,2,-1)--(2,3,-1)--(1,3,-2)--cycle;

\draw[dotted] (-1,-2,3)--(-2,-1,3)--(-3,-1,2)--(-3,-2,1)--(-2,-3,1)--(-1,-3,2)--cycle;

\draw (2,-3,1)--(2,-3,-1)--(3,-2,-1);
\draw (3,-1,-2)--(2,-1,-3)--(2,1,-3);
\draw (1,2,-3)--(-1,2,-3)--(-1,3,-2);

\draw[fill=yellow, opacity=0.5] ($0.5*(b3b21)+0.5*(b2b31)$)--($0.5*(231)+0.5*(321)$)--($0.5*(123)+0.5*(213)$)--($0.5*(12b3)+0.5*(21b3)$)--($0.5*(23b1)+0.5*(31b2)$)--(23b1)--(2b3b1)--(1b3b2)--(1b2b3)--(2b1b3)--(2b13)--(3b12)--(3b21)--(b3b21)--cycle;

\draw[thick, blue] ($0.5*(b3b21)+0.5*(b2b31)$)--($0.5*(231)+0.5*(321)$)--($0.5*(123)+0.5*(213)$)--($0.5*(12b3)+0.5*(21b3)$)--($0.5*(23b1)+0.5*(31b2)$);
\draw[thick, dashed, blue] ($0.5*(b3b21)+0.5*(b2b31)$)--($0.5*(b2b13)+0.5*(b1b23)$)--($0.5*(b2b1b3)+0.5*(b1b2b3)$)--($0.5*(b3b2b1)+0.5*(b2b3b1)$)--($0.5*(23b1)+0.5*(31b2)$);
\draw[thick, blue] ($0.5*(23b1)+0.5*(31b2)$)--(23b1)--(2b3b1)--(1b3b2)--(1b2b3)--(2b1b3)--(2b13)--(3b12)--(3b21)--(b3b21)--($0.5*(b3b21)+0.5*(b2b31)$);
\draw[blue, dashed, thick] (1,-3,2)--(-1,-3,2)--(-2,-3,1)--(-2,-3,-1)--(-1,-3,-2)--(1,-3,-2)--(2,-3,-1)--(2,-3,1)--cycle;

\draw[blue, thick, dashed] (b2b13)--($1/2*(b1b23)+1/2*(b2b13)$);
\draw[blue, thick, dashed] (b2b1b3)--($1/2*(b1b2b3)+1/2*(b2b1b3)$);
\draw[blue, thick, dashed] (b3b2b1)--($1/2*(b2b3b1)+1/2*(b3b2b1)$);
\draw[blue, thick, dashed] (3b1b2)--(3b2b1)--(b3b2b1)--(b3b1b2);
\draw[blue, thick, dashed] (b3b12)--(b3b21);
\draw[blue, thick, dashed] (2b3b1)--(3b2b1);

\draw[blue, thick] (3,1,2)--(3,-1,2)--(3,-2,1)--(3,-2,-1)--(3,-1,-2)--(3,1,-2)--(3,2,-1)--(3,2,1)--cycle;
\draw[blue, thick] (3b21)--(2b31)--(231)--($0.5*(231)+0.5*(321)$);
\draw[blue, thick] (2b31)--(1b32);
\draw[blue, thick] (132)--(231);
\draw[blue, thick] (13b2)--(23b1);
\draw[blue, thick] (123)--($1/2*(123)+1/2*(213)$);
\draw[blue, thick] (12b3)--($1/2*(12b3)+1/2*(21b3)$);
\draw[blue, thick] (23b1)--($1/2*(23b1)+1/2*(31b2)$);
\node at (0,0,-5) {$P_{B_3}(\{1\})$};

\end{scope}

\begin{scope}[scale=0.4, rotate around x=-90, rotate around z=-105, xshift=300]

\coordinate (132) at (3,1,2) ;
\coordinate (1b32) at (3,-1,2) ;
\coordinate (1b23) at (3,-2,1) ;
\coordinate (1b2b3) at (3,-2,-1) ;
\coordinate (1b3b2) at (3,-1,-2) ;
\coordinate (13b2) at (3,1,-2) ;
\coordinate (12b3) at (3,2,-1) ;
\coordinate (123) at (3,2,1) ;

\coordinate (b132) at (-3,1,2) ;
\coordinate (b1b32) at (-3,-1,2) ;
\coordinate (b1b23) at (-3,-2,1) ;
\coordinate (b1b2b3) at (-3,-2,-1) ;
\coordinate (b1b1b3) at (-3,-1,-2) ;
\coordinate (b13b2) at (-3,1,-2) ;
\coordinate (b12b3) at (-3,2,-1) ;
\coordinate (b123) at (-3,2,1) ;

\coordinate (312) at (1,3,2) ;
\coordinate (b312) at (-1,3,2) ;
\coordinate (b213) at (-2,3,1) ;
\coordinate (b21b3) at (-2,3,-1) ;
\coordinate (b31b2) at (-1,3,-2) ;
\coordinate (31b2) at (1,3,-2) ;
\coordinate (21b3) at (2,3,-1) ;
\coordinate (213) at (2,3,1) ;

\coordinate (3b12) at (1,-3,2) ;
\coordinate (b3b12) at (-1,-3,2) ;
\coordinate (b2b13) at (-2,-3,1) ;
\coordinate (b2b1b3) at (-2,-3,-1) ;
\coordinate (b3b1b2) at (-1,-3,-2) ;
\coordinate (3b1b2) at (1,-3,-2) ;
\coordinate (2b1b3) at (2,-3,-1) ;
\coordinate (2b13) at (2,-3,1) ;

\coordinate (321) at (1,2,3) ;
\coordinate (3b21) at (1,-2,3) ;
\coordinate (b321) at (-1,2,3) ;
\coordinate (b3b21) at (-1,-2,3) ;
\coordinate (231) at (2,1,3) ;
\coordinate (2b31) at (2,-1,3) ;
\coordinate (b231) at (-2,1,3) ;
\coordinate (b2b31) at (-2,-1,3) ;

\coordinate (31b2) at (1,2,-3) ;
\coordinate (3b2b1) at (1,-2,-3) ;
\coordinate (b32b1) at (-1,2,-3) ;
\coordinate (b3b2b1) at (-1,-2,-3) ;
\coordinate (23b1) at (2,1,-3) ;
\coordinate (2b3b1) at (2,-1,-3) ;
\coordinate (b23b1) at (-2,1,-3) ;
\coordinate (b2b3b1) at (-2,-1,-3) ;

\draw (2,-3,1)--(3,-2,1)--(3,-2,-1)--(2,-3,-1)--cycle;

\draw[fill] (3,1,2) circle (1.2pt); 	
\draw[fill] (3,-1,2) circle (1.2pt); 	
\draw[fill] (3,-2,1) circle (1.2pt); 	
\draw[fill] (3,-2,-1) circle (1.2pt); 
\draw[fill] (3,-1,-2) circle (1.2pt); 
\draw[fill] (3,1,-2) circle (1.2pt); 	
\draw[fill] (3,2,-1) circle (1.2pt); 	
\draw[fill] (3,2,1) circle (1.2pt); 	

\draw (3,1,2)--(3,-1,2)--(3,-2,1)--(3,-2,-1)--(3,-1,-2)--(3,1,-2)--(3,2,-1)--(3,2,1)--cycle;

\draw[fill] (-3,1,2) circle (1pt); 	
\draw[fill] (-3,-1,2) circle (1pt); 	
\draw[fill] (-3,-2,1) circle (1pt); 	
\draw[fill] (-3,-2,-1) circle (1pt); 	
\draw[fill] (-3,-1,-2) circle (1pt); 	
\draw[fill] (-3,1,-2) circle (1pt); 	
\draw[fill] (-3,2,-1) circle (1pt); 	
\draw[fill] (-3,2,1) circle (1pt); 	

\draw[dotted] (-3,1,2)--(-3,-1,2)--(-3,-2,1)--(-3,-2,-1)--(-3,-1,-2)--(-3,1,-2)--(-3,2,-1)--(-3,2,1)--cycle;

\draw[fill] (1,3,2) circle (1.2pt);	
\draw[fill] (-1,3,2) circle (1.2pt); 	
\draw[fill] (-2,3,1) circle (1.2pt); 	
\draw[fill] (-2,3,-1) circle (1.2pt); 
\draw[fill] (-1,3,-2) circle (1.2pt);	
\draw[fill] (1,3,-2) circle (1.2pt);	
\draw[fill] (2,3,-1) circle (1.2pt); 	
\draw[fill] (2,3,1) circle (1.2pt); 	

\draw (1,3,2)--(-1,3,2)--(-2,3,1)--(-2,3,-1)--(-1,3,-2)--(1,3,-2)--(2,3,-1)--(2,3,1)--cycle;

\draw[fill] (1,-3,2) circle (1pt); 	
\draw[fill] (-1,-3,2) circle (1pt); 	
\draw[fill] (-2,-3,1) circle (1pt); 	
\draw[fill] (-2,-3,-1) circle (1pt); 	
\draw[fill] (-1,-3,-2) circle (1pt); 	
\draw[fill] (1,-3,-2) circle (1pt); 	
\draw[fill] (2,-3,-1) circle (1pt); 	
\draw[fill] (2,-3,1) circle (1pt); 	

\draw[dotted] (1,-3,2)--(-1,-3,2)--(-2,-3,1)--(-2,-3,-1)--(-1,-3,-2)--(1,-3,-2)--(2,-3,-1)--(2,-3,1)--cycle;

\draw[fill] (1,2,3) circle (1.2pt);	
\draw[fill] (1,-2,3) circle (1.2pt);  
\draw[fill] (-1,2,3) circle (1.2pt); 	
\draw[fill] (-1,-2,3) circle (1.2pt); 
\draw[fill] (2,1,3) circle (1.2pt); 	
\draw[fill] (2,-1,3) circle (1.2pt); 	
\draw[fill] (-2,1,3) circle (1.2pt); 	
\draw[fill] (-2,-1,3) circle (1.2pt);	

\draw (1,2,3)--(-1,2,3)--(-2,1,3)--(-2,-1,3)--(-1,-2,3)--(1,-2,3)--(2,-1,3)--(2,1,3)--cycle;

\draw[fill] (1,2,-3) circle (1pt); 	
\draw[fill] (1,-2,-3) circle (1pt); 	
\draw[fill] (-1,2,-3) circle (1pt); 	
\draw[fill] (-1,-2,-3) circle (1pt); 	
\draw[fill] (2,1,-3) circle (1pt);  	
\draw[fill] (2,-1,-3) circle (1pt); 	
\draw[fill] (-2,1,-3) circle (1pt); 	
\draw[fill] (-2,-1,-3) circle (1pt);  	

\draw[dotted] (1,2,-3)--(-1,2,-3)--(-2,1,-3)--(-2,-1,-3)--(-1,-2,-3)--(1,-2,-3)--(2,-1,-3)--(2,1,-3)--cycle;

\draw (1,2,3)--(2,1,3)--(3,1,2)--(3,2,1)--(2,3,1)--(1,3,2)--cycle;

\draw (-1,2,3)--(-2,1,3)--(-3,1,2)--(-3,2,1)--(-2,3,1)--(-1,3,2)--cycle;

\draw[dotted] (-1,2,-3)--(-2,1,-3)--(-3,1,-2)--(-3,2,-1)--(-2,3,-1)--(-1,3,-2)--cycle;

\draw (1,-2,3)--(2,-1,3)--(3,-1,2)--(3,-2,1)--(2,-3,1)--(1,-3,2)--cycle;

\draw[dotted] (1,-2,-3)--(2,-1,-3)--(3,-1,-2)--(3,-2,-1)--(2,-3,-1)--(1,-3,-2)--cycle;

\draw[dotted] (-1,-2,-3)--(-2,-1,-3)--(-3,-1,-2)--(-3,-2,-1)--(-2,-3,-1)--(-1,-3,-2)--cycle;

\draw (1,2,-3)--(2,1,-3)--(3,1,-2)--(3,2,-1)--(2,3,-1)--(1,3,-2)--cycle;

\draw[dotted] (-1,-2,3)--(-2,-1,3)--(-3,-1,2)--(-3,-2,1)--(-2,-3,1)--(-1,-3,2)--cycle;

\draw (2,-3,1)--(2,-3,-1)--(3,-2,-1);
\draw (3,-1,-2)--(2,-1,-3)--(2,1,-3);
\draw (1,2,-3)--(-1,2,-3)--(-1,3,-2);

\draw[fill=yellow, opacity=0.5] ($1/2*(123)+1/2*(321)$)--($1/2*(123)+1/2*(213)$)--($1/2*(12b3)+1/2*(21b3)$)--($1/2*(23b1)+1/2*(31b2)$)--(23b1)--(2b3b1)--(1b3b2)--($1/2*(1b3b2)+1/2*(1b2b3)$)--($1/2*(123)+1/2*(132)$)--cycle;

\draw[thick, blue] ($1/2*(123)+1/2*(321)$)--($1/2*(123)+1/2*(213)$)--($1/2*(12b3)+1/2*(21b3)$)--($1/2*(23b1)+1/2*(31b2)$)--(23b1)--(2b3b1)--(1b3b2)--($1/2*(1b3b2)+1/2*(1b2b3)$)--($1/2*(123)+1/2*(132)$)--cycle;

\draw[thick, blue, dashed]  ($1/2*(123)+1/2*(321)$)--($1/2*(b1b2b3)+1/2*(b3b2b1)$)-- ($1/2*(b3b2b1)+1/2*(b2b3b1)$)-- ($1/2*(23b1)+1/2*(31b2)$);
\draw[thick, blue, dashed] ($1/2*(1b2b3)+1/2*(1b3b2)$)--($1/2*(3b1b2)+1/2*(3b2b1)$)--($1/2*(b3b1b2)+1/2*(b3b2b1)$)--($1/2*(b1b2b3)+1/2*(b3b2b1)$);
\draw[thick, blue, dashed] ($1/2*(3b1b2)+1/2*(3b2b1)$)--(3b2b1)--(2b3b1);
\draw[thick, blue, dashed] ($1/2*(b3b1b2)+1/2*(b3b2b1)$)--(b3b2b1)--($1/2*(b3b2b1)+1/2*(b2b3b1)$);
\draw[thick, blue, dashed] (3b2b1)--(b3b2b1);

\draw[thick, blue] (1b3b2)--(13b2)--(23b1);
\draw[thick, blue] (13b2)--(12b3)--(123)--($1/2*(123)+1/2*(132)$);
\draw[thick, blue] (12b3)--($1/2*(12b3)+1/2*(21b3)$);
\draw[thick, blue] (123)--($1/2*(123)+1/2*(213)$);
\node at (0,0,-5) {$P_{B_3}(\{1,2\})$};
\end{scope}

\begin{scope}[scale=0.4, rotate around x=-90, rotate around z=-105, xshift=600]

\coordinate (132) at (3,1,2) ;
\coordinate (1b32) at (3,-1,2) ;
\coordinate (1b23) at (3,-2,1) ;
\coordinate (1b2b3) at (3,-2,-1) ;
\coordinate (1b3b2) at (3,-1,-2) ;
\coordinate (13b2) at (3,1,-2) ;
\coordinate (12b3) at (3,2,-1) ;
\coordinate (123) at (3,2,1) ;

\coordinate (b132) at (-3,1,2) ;
\coordinate (b1b32) at (-3,-1,2) ;
\coordinate (b1b23) at (-3,-2,1) ;
\coordinate (b1b2b3) at (-3,-2,-1) ;
\coordinate (b1b1b3) at (-3,-1,-2) ;
\coordinate (b13b2) at (-3,1,-2) ;
\coordinate (b12b3) at (-3,2,-1) ;
\coordinate (b123) at (-3,2,1) ;

\coordinate (312) at (1,3,2) ;
\coordinate (b312) at (-1,3,2) ;
\coordinate (b213) at (-2,3,1) ;
\coordinate (b21b3) at (-2,3,-1) ;
\coordinate (b31b2) at (-1,3,-2) ;
\coordinate (31b2) at (1,3,-2) ;
\coordinate (21b3) at (2,3,-1) ;
\coordinate (213) at (2,3,1) ;

\coordinate (3b12) at (1,-3,2) ;
\coordinate (b3b12) at (-1,-3,2) ;
\coordinate (b2b13) at (-2,-3,1) ;
\coordinate (b2b1b3) at (-2,-3,-1) ;
\coordinate (b3b1b2) at (-1,-3,-2) ;
\coordinate (3b1b2) at (1,-3,-2) ;
\coordinate (2b1b3) at (2,-3,-1) ;
\coordinate (2b13) at (2,-3,1) ;

\coordinate (321) at (1,2,3) ;
\coordinate (3b21) at (1,-2,3) ;
\coordinate (b321) at (-1,2,3) ;
\coordinate (b3b21) at (-1,-2,3) ;
\coordinate (231) at (2,1,3) ;
\coordinate (2b31) at (2,-1,3) ;
\coordinate (b231) at (-2,1,3) ;
\coordinate (b2b31) at (-2,-1,3) ;

\coordinate (31b2) at (1,2,-3) ;
\coordinate (3b2b1) at (1,-2,-3) ;
\coordinate (b32b1) at (-1,2,-3) ;
\coordinate (b3b2b1) at (-1,-2,-3) ;
\coordinate (23b1) at (2,1,-3) ;
\coordinate (2b3b1) at (2,-1,-3) ;
\coordinate (b23b1) at (-2,1,-3) ;
\coordinate (b2b3b1) at (-2,-1,-3) ;

\draw (2,-3,1)--(3,-2,1)--(3,-2,-1)--(2,-3,-1)--cycle;

\draw[fill] (3,1,2) circle (1.2pt); 	
\draw[fill] (3,-1,2) circle (1.2pt); 	
\draw[fill] (3,-2,1) circle (1.2pt); 	
\draw[fill] (3,-2,-1) circle (1.2pt); 
\draw[fill] (3,-1,-2) circle (1.2pt); 
\draw[fill] (3,1,-2) circle (1.2pt); 	
\draw[fill] (3,2,-1) circle (1.2pt); 	
\draw[fill] (3,2,1) circle (1.2pt); 	

\draw (3,1,2)--(3,-1,2)--(3,-2,1)--(3,-2,-1)--(3,-1,-2)--(3,1,-2)--(3,2,-1)--(3,2,1)--cycle;

\draw[fill] (-3,1,2) circle (1pt); 	
\draw[fill] (-3,-1,2) circle (1pt); 	
\draw[fill] (-3,-2,1) circle (1pt); 	
\draw[fill] (-3,-2,-1) circle (1pt); 	
\draw[fill] (-3,-1,-2) circle (1pt); 	
\draw[fill] (-3,1,-2) circle (1pt); 	
\draw[fill] (-3,2,-1) circle (1pt); 	
\draw[fill] (-3,2,1) circle (1pt); 	

\draw[dotted] (-3,1,2)--(-3,-1,2)--(-3,-2,1)--(-3,-2,-1)--(-3,-1,-2)--(-3,1,-2)--(-3,2,-1)--(-3,2,1)--cycle;

\draw[fill] (1,3,2) circle (1.2pt);	
\draw[fill] (-1,3,2) circle (1.2pt); 	
\draw[fill] (-2,3,1) circle (1.2pt); 	
\draw[fill] (-2,3,-1) circle (1.2pt); 
\draw[fill] (-1,3,-2) circle (1.2pt);	
\draw[fill] (1,3,-2) circle (1.2pt);	
\draw[fill] (2,3,-1) circle (1.2pt); 	
\draw[fill] (2,3,1) circle (1.2pt); 	

\draw (1,3,2)--(-1,3,2)--(-2,3,1)--(-2,3,-1)--(-1,3,-2)--(1,3,-2)--(2,3,-1)--(2,3,1)--cycle;

\draw[fill] (1,-3,2) circle (1pt); 	
\draw[fill] (-1,-3,2) circle (1pt); 	
\draw[fill] (-2,-3,1) circle (1pt); 	
\draw[fill] (-2,-3,-1) circle (1pt); 	
\draw[fill] (-1,-3,-2) circle (1pt); 	
\draw[fill] (1,-3,-2) circle (1pt); 	
\draw[fill] (2,-3,-1) circle (1pt); 	
\draw[fill] (2,-3,1) circle (1pt); 	

\draw[dotted] (1,-3,2)--(-1,-3,2)--(-2,-3,1)--(-2,-3,-1)--(-1,-3,-2)--(1,-3,-2)--(2,-3,-1)--(2,-3,1)--cycle;

\draw[fill] (1,2,3) circle (1.2pt);	
\draw[fill] (1,-2,3) circle (1.2pt);  
\draw[fill] (-1,2,3) circle (1.2pt); 	
\draw[fill] (-1,-2,3) circle (1.2pt); 
\draw[fill] (2,1,3) circle (1.2pt); 	
\draw[fill] (2,-1,3) circle (1.2pt); 	
\draw[fill] (-2,1,3) circle (1.2pt); 	
\draw[fill] (-2,-1,3) circle (1.2pt);	

\draw (1,2,3)--(-1,2,3)--(-2,1,3)--(-2,-1,3)--(-1,-2,3)--(1,-2,3)--(2,-1,3)--(2,1,3)--cycle;

\draw[fill] (1,2,-3) circle (1pt); 	
\draw[fill] (1,-2,-3) circle (1pt); 	
\draw[fill] (-1,2,-3) circle (1pt); 	
\draw[fill] (-1,-2,-3) circle (1pt); 	
\draw[fill] (2,1,-3) circle (1pt);  	
\draw[fill] (2,-1,-3) circle (1pt); 	
\draw[fill] (-2,1,-3) circle (1pt); 	
\draw[fill] (-2,-1,-3) circle (1pt);  	

\draw[dotted] (1,2,-3)--(-1,2,-3)--(-2,1,-3)--(-2,-1,-3)--(-1,-2,-3)--(1,-2,-3)--(2,-1,-3)--(2,1,-3)--cycle;

\draw (1,2,3)--(2,1,3)--(3,1,2)--(3,2,1)--(2,3,1)--(1,3,2)--cycle;

\draw (-1,2,3)--(-2,1,3)--(-3,1,2)--(-3,2,1)--(-2,3,1)--(-1,3,2)--cycle;

\draw[dotted] (-1,2,-3)--(-2,1,-3)--(-3,1,-2)--(-3,2,-1)--(-2,3,-1)--(-1,3,-2)--cycle;

\draw (1,-2,3)--(2,-1,3)--(3,-1,2)--(3,-2,1)--(2,-3,1)--(1,-3,2)--cycle;

\draw[dotted] (1,-2,-3)--(2,-1,-3)--(3,-1,-2)--(3,-2,-1)--(2,-3,-1)--(1,-3,-2)--cycle;

\draw[dotted] (-1,-2,-3)--(-2,-1,-3)--(-3,-1,-2)--(-3,-2,-1)--(-2,-3,-1)--(-1,-3,-2)--cycle;

\draw (1,2,-3)--(2,1,-3)--(3,1,-2)--(3,2,-1)--(2,3,-1)--(1,3,-2)--cycle;

\draw[dotted] (-1,-2,3)--(-2,-1,3)--(-3,-1,2)--(-3,-2,1)--(-2,-3,1)--(-1,-3,2)--cycle;

\draw (2,-3,1)--(2,-3,-1)--(3,-2,-1);
\draw (3,-1,-2)--(2,-1,-3)--(2,1,-3);
\draw (1,2,-3)--(-1,2,-3)--(-1,3,-2);

\draw[fill=yellow, opacity=0.5] ($1/2*(123)+1/2*(321)$)--($1/2*(123)+1/2*(213)$)--($1/2*(123)+1/2*(21b3)$)--($1/2*(123)+1/2*(12b3)$)--($1/2*(123)+1/2*(1b2b3)$)--($1/2*(123)+1/2*(132)$)--cycle;
\draw[thick, blue] ($1/2*(123)+1/2*(321)$)--($1/2*(123)+1/2*(213)$)--($1/2*(123)+1/2*(21b3)$)--($1/2*(123)+1/2*(12b3)$)--($1/2*(123)+1/2*(1b2b3)$)--($1/2*(123)+1/2*(132)$)--cycle;

\draw[thick, dashed, blue] (0,0,0)--($1/2*(123)+1/2*(321)$);
\draw[thick, dashed, blue] (0,0,0)--($1/2*(123)+1/2*(1b2b3)$);
\draw[thick, dashed, blue] (0,0,0)--($1/2*(123)+1/2*(21b3)$);

\draw[thick, blue] (123)--($1/2*(123)+1/2*(132)$);
\draw[thick, blue] (123)--($1/2*(123)+1/2*(213)$);
\draw[thick, blue] (123)--($1/2*(123)+1/2*(12b3)$);

\node at (0,0,-5) {$P_{B_3}(\{1,2,3\})$};

\end{scope}

\end{tikzpicture}
\caption{Examples of partitioned weight polytopes in type $B_3$.}
\label{fig_part_weight_poly_B}
\end{figure}

The next  proposition is the type $B_n$ analogue of Proposition \ref{prop_A_position_F(I)}, and can be proved similarly using the definition of $F(I)$ given in \eqref{eq_B_facet_convex_hull} together with the property that $x_i+x_{\bar i}=0$ in $E$ for $i=1, \dots, n$.

\begin{proposition}\label{prop_B_position_F(I)}
For each $k\in [n]$ and $I\in \M_{B_n}$, the following claims hold.
\begin{enumerate}
\item The facet $F(\indI)$ intersects nontrivially with $\Hk$ if and only if $\indI$ is $s_k$-invariant. In this case, $F(\indI)$ is bisected by the hyperplane $\Hk$. 
\item The facet $F(\indI)$ is contained in the interior part of $\Hk^\le$ if and only if  
\begin{enumerate}
\item $k\in I$ but  $k+1\notin I$ or  $\overline{k+1}\in I$ but $\bar{k} \notin I$, when $1\leq k < n$; 
\item $n\in I$ but $\bar n \notin I$, when $k=n$.
\end{enumerate}
\end{enumerate}
\end{proposition}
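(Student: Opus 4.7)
The plan is to mirror the proof of Proposition~\ref{prop_A_position_F(I)}, adapted to the signed setting, using the explicit description of $F(I)$ in \eqref{eq_B_facet_convex_hull} together with the defining constraint $x_i + x_{\bar i} = 0$ on $E$. For part~(1), I would first observe that $s_k$ permutes the set of facets of $P_{B_n}$ via $s_k F(I) = F(s_k I)$, which follows directly from \eqref{eq_B_facet_convex_hull} and the fact that $s_k$ preserves $P_{B_n}$. Since $H(k)$ is exactly the fixed hyperplane of $s_k$, either $F(I)$ is bisected by $H(k)$ (when $F(I)$ is $s_k$-invariant) or $F(I)$ and $s_k F(I) = F(s_k I)$ are disjoint facets with no common vertex; this last point follows from checking that the minimizing-sum description in \eqref{eq_B_facet_convex_hull} yields disjoint vertex sets whenever $I \ne s_k I$. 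Since $F(I)$ being $s_k$-invariant is equivalent to $I$ being $s_k$-invariant, part~(1) follows.

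For part~(2), the key observation is that a vertex of $F(I)$ labeled by $u \in W_{B_n}$ satisfies $\{u(i) \mid i \in I\} = \{1, 2, \ldots, |I|\}$, because this configuration minimizes $\sum_{i \in I} a_{u(i)}$ with respect to the total ordering $a_1 < \cdots < a_n < a_{\bar n} < \cdots < a_{\bar 1}$ coming from $a_{\bar i} = -a_i$ and $a_1 < \cdots < a_n < 0$. When $1 \le k < n$, we have $\alpha_k^{\v}(v_u) = a_{u(k)} - a_{u(k+1)}$ at such a vertex. If $k \in I$ but $k+1 \notin I$, then $u(k)$ lies among the bottom $|I|$ indices while $u(k+1)$ does not, forcing $\alpha_k^{\v}(v_u) < 0$. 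The symmetric condition $\overline{k+1} \in I$ and $\bar k \notin I$ uses $u(\bar j) = \overline{u(j)}$ to put $u(k+1)$ among the top $|I|$ indices and $u(k)$ among the bottom $2n - |I|$, again yielding $\alpha_k^{\v}(v_u) < 0$. When $k = n$, we evaluate $\alpha_n^{\v}(v_u) = a_{u(n)}$: if $n \in I$ and $\bar n \notin I$, then $u(n) \le |I| \le n$ gives $a_{u(n)} < 0$. The converse directions come from part~(1): by a complete enumeration of the configurations of $I \cap \{k, k+1, \bar k, \overline{k+1}\}$ compatible with $I \in \M_{B_n}$ being signed, the $s_k$-noninvariant cases not appearing in the proposition place $F(I)$ in the opposite open half-space by the same computation.

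The main obstacle is largely combinatorial bookkeeping: enumerating the six non-$s_k$-invariant configurations of $I \cap \{k, k+1, \bar k, \overline{k+1}\}$ (subject to $|I \cap \{i, \bar i\}| \le 1$), and pairing each with its resulting half-space. The underlying computations are essentially the type-$A$ argument run on the sorted order of the $a_j$, but the doubling of the index set by conjugation $j \mapsto \bar j$ demands care in matching the second clause of (2a) with the first via the involution, and in verifying that $x_i + x_{\bar i} = 0$ does not introduce additional constraints beyond those already handled.
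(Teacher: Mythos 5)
Your proposal is correct and takes exactly the route the paper intends: the paper gives no separate proof here, remarking only that the statement ``can be proved similarly'' to Proposition~\ref{prop_A_position_F(I)} using \eqref{eq_B_facet_convex_hull} and the relation $x_i+x_{\bar i}=0$, which is precisely your reflection/disjoint-vertex argument for (1) and your sorted-coordinate computation at the vertices $\{u \mid u(I)=[|I|]\}$ for (2). The enumeration of the six non-$s_k$-invariant configurations of $I\cap\{k,k+1,\bar k,\overline{k+1}\}$ that you flag as bookkeeping does close the converse direction, so there is no gap.
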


Motivated by this proposition, we make the following definition.  

\begin{definition}\label{def_B_K-lower}
For a subset $K\subset [n]$, we define $\M_{B_n}(K)$ to be the set of elements $I\in \M_{B_n}$ such that $I\cap N$ is a lower subset of $N$ with respect to the order 
\[
1<2<\cdots<n<\bar{n}<\cdots<\bar{2}<\bar{1}
\]
for every disjoint component $N$ appearing in the $W_K$-orbit decomposition \eqref{eq:B_orbit_decomposition}.
\end{definition}

\begin{example}
Continuing with Example~\ref{ex_B_3_orbit_decomp}-(3), $\M_{B_3}(\{2,3\})$ consists of 
\[
\{1\}, \{\bar 1\}, \{2\}, \{1,2\}, \{2, \bar1\}, \{2,3\}, \{1,2,3\}, \{2,3, \bar 1\}. 
\]
\end{example}

With the definition of $\M_{B_n}(K)$ above, Corollary~\ref{cor_A_facet_characterization_of_Part_perm}, Propositions~\ref{prop_A_flagness_part_w_poly} and~\ref{prop_A_intersection_facets_PK} hold for type $B_n$.  
Moreover, Proposition~\ref{prop_A_cohomologyXK} also holds for type $B_n$ with 
\[
\alpha^{\v}_k =\begin{cases}
e_k-e_{k+1} & 1\leq k < n,\\
{2}e_n & k=n.
\end{cases}
\]

\subsection{Proof of Theorem \ref{main} for type $B_n$}\label{subsec_B_proof_of_main_thm}
 
In analogy with what was done in type $A_{n-1}$, one can write
\begin{align*}
e_\indI - e_{v(\indI)}=\sum_{k \in K} c^{\indI,v}_k \alpha^{\v}_k 
\end{align*}
for some non-negative integers $c^{\indI,v}_k$, whenever $\indI \in \M_{B_n}(K)$ and $v \in W_K$. In order to state a sufficient condition for $c_k^{\indI,v}$ to be zero, we introduce the following notation: 
\begin{equation}\label{notation_khat}
\widehat{[k]} \colonequals 
\left\{\bar{n}, \overline{n-1},\ldots, \bar{k} \right\}, \quad \text{ for }1 \leq k \leq n.
\end{equation}
The following lemma is the type $B_n$ analogue of Lemma~\ref{lem_A_coeff_nonzero} and admits a similar proof. 

\begin{lemma} \label{lemm:B_coeff_zero}
Let $\indI \in \M_{B_n}(K)$, $v \in W_K$ and consider the $W_K$-orbit decomposition \eqref{eq:B_orbit_decomposition}.  Then $c^{\indI,v}_k=0$ $(k\in K)$ in the following cases:
\begin{enumerate}
\item When $k\in N_i$ ($k\not=n$ by definition of $N_i$ since $k\in K$),   
\[
\begin{split}
&v(\indI) \cap N_i \subset [k] \cap N_i \ {\rm or} \ v(\indI) \cap N_i \supset [k] \cap N_i, \text{ and}\\ 
&v(\indI) \cap \overline{N_i} \subset \widehat{[k+1]}  \cap \overline{N_i} \ {\rm or} \ v(\indI) \cap \overline{N_i} \supset \widehat{[k+1]} \cap \overline{N_i} .
\end{split}
\]
\item When $k\in N'$, 
\[
v(\indI) \cap N' \subset [k] \cap N' \ {\rm or} \ v(\indI) \cap N' \supset [k] \cap N'.
\]
\end{enumerate}
\end{lemma}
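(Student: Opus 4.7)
The proof mirrors Lemma~\ref{lem_A_coeff_nonzero}, with modifications to accommodate the signed-permutation factor on the orbit $N'$ and the short simple coroot $\alpha_n^{\v}=2e_n$. The key structural observation is that $W_K$ decomposes as a direct product indexed by the orbit components in~\eqref{eq:B_orbit_decomposition}: for each $i$ a copy of $\mathfrak{S}_{N_i}$ acting simultaneously on $N_i$ and $\overline{N_i}$, together with a type-$B$ Weyl group acting on $N'\cap[n]$. Correspondingly, one obtains an orbit-wise decomposition
\[
e_I - e_{v(I)} = \sum_i\bigl(e_{I\cap(N_i\cup\overline{N_i})}-e_{v(I)\cap(N_i\cup\overline{N_i})}\bigr) + \bigl(e_{I\cap N'}-e_{v(I)\cap N'}\bigr),
\]
and each summand lies in the $\mathbb{Z}$-span of the simple coroots attached to its own factor. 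Since each $\alpha_k^{\v}$ with $k\in K$ belongs to exactly one factor, $c_k^{I,v}$ is determined by the single summand associated with the orbit of~$k$.

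\textbf{Case (1): $k\in N_i$.} Then $k\ne n$, so $\alpha_k^{\v}=e_k-e_{k+1}$ lies in the type-$A$ factor on $N_i\cup\overline{N_i}$. Writing $I_+ = I\cap N_i$ and $I_- = \{j\in N_i : \bar j\in I\}$ (disjoint by the signed condition), the relevant summand equals $\sum_{j\in N_i}\bigl(\mathbf{1}_{I_+}(j)-\mathbf{1}_{v(I)_+}(j)-\mathbf{1}_{I_-}(j)+\mathbf{1}_{v(I)_-}(j)\bigr)e_j$. Inverting the standard type-$A$ partial-sum relation yields
\[
c_k^{I,v} = \bigl(|I\cap P|-|v(I)\cap P|\bigr)+\bigl(|I\cap Q|-|v(I)\cap Q|\bigr),
\]
where $P = [k]\cap N_i$ and $Q = \widehat{[k+1]}\cap\overline{N_i}$. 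Both $P$ and $I\cap N_i$ are lower subsets of the interval $N_i$, and both $Q$ and $I\cap\overline{N_i}$ are lower subsets of $\overline{N_i}$; since lower subsets of an interval form a chain under inclusion, each pair is automatically comparable. The hypothesis supplies the analogous comparability of $v(I)\cap N_i$ with $P$ and of $v(I)\cap\overline{N_i}$ with $Q$. Combining these with the cardinality equalities $|v(I)\cap N_i|=|I\cap N_i|$ and $|v(I)\cap\overline{N_i}|=|I\cap\overline{N_i}|$, a routine case check in each of the four sub-cases collapses both parentheses to~$0$, so $c_k^{I,v}=0$.

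\textbf{Case (2): $k\in N'$.} Here $n\in K$, $N'\cap[n]=[c,n]$ for some $c$, and $\alpha_k^{\v}$ lies in the type-$B$ factor on $N'$, with simple coroots $\alpha_c^{\v},\ldots,\alpha_{n-1}^{\v}$ and $\alpha_n^{\v}=2e_n$. A preliminary observation is that any signed lower subset of $N'$ is automatically contained in $[c,n]$: if some $\bar j\in I\cap N'$, downward closure in the order $c<\cdots<n<\bar n<\cdots$ would put $n$ in $I\cap N'$ as well, contradicting the signed condition. Thus $I\cap N'=[c,m]$ for some $c-1\le m\le n$, and the type-$B$ partial-sum inversion (accounting for the factor $2$ at the short coroot) gives, for $c\le k<n$,
\[
c_k^{I,v} = |I\cap P'|-|v(I)\cap P'|+|v(I)\cap\overline{P'}|,\qquad P' = [k]\cap N' = [c,k],
\]
while $c_n^{I,v}=|v(I)\cap N'\cap\overline{[n]}|$ when $k=n$. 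Under the hypothesis $v(I)\cap N'\subset P'$ or $\supset P'$, the equality $|v(I)\cap N'|=|I\cap N'|\le n-c+1$ forces $v(I)\cap N'\subset[c,n]$ in both sub-cases (so $|v(I)\cap\overline{P'}|=0$ and $v(I)\cap N'\cap\overline{[n]}=\emptyset$) and places $v(I)\cap N'$ in the correct inclusion relation with $I\cap N'$; the remaining terms then cancel.

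\textbf{Main obstacle.} The principal technicality is the coroot computation on $N'$ in case~(2): one must carry through the partial-sum inversion while correctly handling the factor $2$ at the short coroot $\alpha_n^{\v}$, and verify that the signed hypothesis on $I$ interacts with downward closure in $N'$ to eliminate all barred elements of $I\cap N'$. Once these are settled, the remaining bookkeeping is a direct transcription of the type-$A$ chain comparison, with the new wrinkle that the contribution from $\overline{N_i}$ in case~(1) must be separately tracked alongside that from $N_i$.
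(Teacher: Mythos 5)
Your argument is correct and is essentially the proof the paper intends: the paper only remarks that this lemma ``admits a similar proof'' to Lemma~\ref{lem_A_coeff_nonzero}, and your orbit-wise decomposition of $e_I-e_{v(I)}$ together with the explicit partial-sum inversion of the coefficients (including the factor $2$ at $\alpha_n^{\v}$ and the observation that the lower condition plus signedness forces $I\cap N'\subset[n]$) is exactly that argument carried out in detail. One small repair is needed in Case (2): for $k<n$ in the sub-case $v(I)\cap N'\supset [k]\cap N'$, the cardinality equality does \emph{not} force $v(I)\cap N'\subset[c,n]$ (for instance $v(I)\cap N'$ could be $\{c,\dots,k\}\cup\{\overline{k+2}\}$); what you actually need there, namely $v(I)\cap\overline{P'}=\emptyset$, follows instead from the signedness of $v(I)$ together with $P'\subset v(I)$. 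With that substitution the cancellation goes through exactly as you state.
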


With these modifications, the argument developed for type $A_{n-1}$ after Lemma~\ref{lem_A_coeff_nonzero} works for type $B_n$.

\section{Type $D$}\label{sec_typeD}

The proof of Theorem \ref{main} for type $A_{n-1}$ also works for type $D_n$ with suitable modifications.  In this section, we point out necessary modifications.  The situation is similar to that in type $B_n$ but more complicated.  We maintain the notations $\nbar$ and $\bar i$ etc. from Appendix~\ref{sec_typeB}. 

\subsection{Weight polytope of type $D_n$}\label{subsec_D_weight_poly}

In the same vector space $E$ defined in \eqref{eq_B_vector_sp} with the same basis $\{t_1, \dots, t_n\}\subset E$, we take the simple roots 
\begin{equation*}
\alpha_i = \begin{cases}
t_i-t_{i+1} &\text{for }1 \leq i \leq n-1;\\
t_{n-1}+t_n &\text{for } i=n.
\end{cases}
\end{equation*}
to define the type $D_n$ root system $\Phi_{D_n}$. 
The Weyl group $W_{D_n}$ is the group of \emph{even-signed permutations}, namely
\[
W_{D_n}=\{u \in W_{B_n} \mid |\{u(1),\ldots, u(n)\} \cap \overline{[n]}|  \text{ is even}\}.
\]
This group is generated by $s_1, \dots, s_n$ where 
\begin{align*}
s_i&=(i, i+1)(\overline{i},\overline{i+1}) \quad \text{ for }1 \leq i \leq n-1;\\
s_n&=(n-1, \overline{n})(\overline{n-1}, n).
\end{align*}

As with type $B_n$, we take a point $(a_1,\dots,a_n, a_{\bar n}, \dots, a_{\bar 1})\in E$ with $a_1<\cdots <a_n<0$ and define the weight polytope of type $D_n$ by   
\begin{equation*}
P_{D_n}\colonequals \text{\rm conv}\{ (a_{u(1)},\dots,a_{u(n)},a_{u(\bar n)},\dots,a_{u(\bar 1)})\in E \mid u\in W_{D_n} \}. 
\end{equation*}
This polytope is $n$-dimensional and sits in the $n$-dimensional vector space $E$. See Figure \ref{fig_weight_Poly_dim2_3TypeD} for the description of $P_{D_2}$ and $P_{D_3}$, where vertices are labeled by the first half of the one-line notation as in Figure \ref{fig_weight_Poly_dim2_3TypeB}. 

\begin{figure}
\begin{tikzpicture}


\begin{scope}[scale=0.6, yshift=-260]
\draw[thick, fill=yellow!50] (-1,-2)--(2,1)--(1,2)--(-2,-1)--cycle;

\node[below] at (-1,-2) {\tiny$21$};
\node[left] at (-2,-1) {\tiny$12$};
\node[above] at (1,2) {\tiny$\bar2\bar1$};
\node[right] at (2,1) {\tiny$\bar1\bar2$};
\node at (0,-3.7) {$P_{D_2}\subset \RR^4$};

\end{scope}

\begin{scope}[scale=0.5, rotate around x=-90, rotate around z=-105, xshift=350, yshift=-300]
\node at (0,0,-5) {$P_{D_3}\subset \RR^6$};

\coordinate (132) at (3,1,2);
\coordinate (13b2) at (3,-1,2);
\coordinate (12b3) at (3,-2,1);
\coordinate (12b3b) at (3,-2,-1);
\coordinate (13b2b) at (3,-1,-2);
\coordinate (132b) at (3,1,-2);
\coordinate (12b3) at (3,2,-1);
\coordinate (123) at (3,2,1);

\draw[fill] (3,1,2) circle (1.2pt); 	\node[right] at (3,1,2) {\tiny$132$};
\draw[fill] (3,-2,-1) circle (1.2pt); \node[left] at (3,-2,-1) {\tiny$1 \bar2 \bar3$};
\draw[fill] (3,-1,-2) circle (1.2pt); \node[left] at (3,-1,-2) {\tiny$1 \bar3 \bar2$};
\draw[fill] (3,2,1) circle (1.2pt); 	\node[left] at (3,2,1) {\tiny$123$};

\draw[thick] (132)--(123)--(13b2b)--(12b3b)--cycle;

\coordinate (1b32) at (-3,1,2);
\coordinate (1b3b2) at (-3,-1,2);
\coordinate (1b2b3) at (-3,-2,1);
\coordinate (1b2b3b) at (-3,-2,-1);
\coordinate (1b3b2b) at (-3,-1,-2);
\coordinate (1b32b) at (-3,1,-2);
\coordinate (1b23b) at (-3,2,-1);
\coordinate (1b23) at (-3,2,1);

\draw[fill] (-3,-1,2) circle (1pt); 	\node[right] at (-3,-1,2) {\tiny$\bar1 \bar3 2$};
\draw[fill] (-3,-2,1) circle (1pt); 	\node[above, gray] at (-3,-2,1) {\tiny$\bar1 \bar2 3$};
\draw[fill] (-3,1,-2) circle (1pt); 	\node[above, gray] at (-3,1,-2) {\tiny$\bar1 3 \bar2$};
\draw[fill] (-3,2,-1) circle (1pt); 	\node[right] at (-3,2,-1) {\tiny$\bar1 2 \bar3$};

\draw[thick] (1b3b2)--(1b23b);
\draw[dotted] (1b3b2)--(1b2b3)--(1b32b)--(1b23b);

\coordinate (312) at (1,3,2);
\coordinate (3b12) at (-1,3,2);
\coordinate (2b13) at (-2,3,1);
\coordinate (2b13b) at (-2,3,-1);
\coordinate (3b12b) at (-1,3,-2);
\coordinate (312b) at (1,3,-2);
\coordinate (213b) at (2,3,-1);
\coordinate (213) at (2,3,1);

\draw[fill] (1,3,2) circle (1.2pt);	\node[left] at (1,3,2) {\tiny$312$};
\draw[fill] (-2,3,-1) circle (1.2pt); \node[right] at (-2,3,-1) {\tiny$\bar2 1 \bar3 $};
\draw[fill] (-1,3,-2) circle (1.2pt);	\node[right] at (-1,3,-2) {\tiny$\bar3 1 \bar2$};
\draw[fill] (2,3,1) circle (1.2pt); 	\node[right] at (2,3,1) {\tiny$213$};

\draw[thick] (213)--(312)--(2b13b)--(3b12b)--cycle;

\coordinate (31b2) at (1,-3,2);
\coordinate (3b1b2) at (-1,-3,2);
\coordinate (2b1b3) at (-2,-3,1);
\coordinate (2b1b3b) at (-2,-3,-1);
\coordinate (3b1b2b) at (-1,-3,-2);
\coordinate (31b2b) at (1,-3,-2);
\coordinate (21b3b) at (2,-3,-1);
\coordinate (21b3) at (2,-3,1);

\draw[fill] (-1,-3,2) circle (1pt); 	\node[left] at (-1,-3,2) {\tiny$\bar3 \bar1 2$};
\draw[fill] (-2,-3,1) circle (1pt); 	\node[below, gray] at (-2,-3,1) {\tiny$\bar2 \bar1 3$};
\draw[fill] (1,-3,-2) circle (1pt); 	\node[right, gray] at (1,-3,-2) {\tiny$3\bar1 \bar2$};
\draw[fill] (2,-3,-1) circle (1pt); 	\node[left] at (2,-3,-1) {\tiny$2 \bar1 \bar3$};

\draw[thick](21b3b)--(3b1b2);
\draw[dotted] (21b3b)--(31b2b)--(2b1b3)--(3b1b2);

\coordinate (321) at (1,2,3);
\coordinate (32b1) at (-1,-2,3);
\coordinate (3b21) at (-1,2,3);
\coordinate (3b2b1) at (-1,-2,3);
\coordinate (231) at (2,1,3);
\coordinate (23b1) at (2,-1,3);
\coordinate (2b31) at (-2,1,3);
\coordinate (2b3b1) at (-2,-1,3);

\draw[fill] (1,2,3) circle (1.2pt);	\node[below] at (1.2,2,3) {\tiny$321$};
\draw[fill] (-1,-2,3) circle (1.2pt); \node[above] at (-1,-2,3) {\tiny$\bar3 \bar2 1$};
\draw[fill] (2,1,3) circle (1.2pt); 	\node[below] at (2-0.4,1,3-0.2) {\tiny$231$};
\draw[fill] (-2,-1,3) circle (1.2pt);	\node[above] at (-2,-1,3) {\tiny$\bar2 \bar3 1$};

\draw[thick] (231)--(321)--(2b3b1)--(3b2b1)--cycle;

\coordinate (321b) at (1,2,-3);
\coordinate (32b1b) at (1,-2,-3);
\coordinate (3b21b) at (-1,2,-3);
\coordinate (3b2b1b) at (-1,-2,-3);
\coordinate (231b) at (2,1,-3);
\coordinate (23b1b) at (2,-1,-3);
\coordinate (2b31b) at (-2,1,-3);
\coordinate (2b3b1b) at (-2,-1,-3);

\draw[fill] (1,-2,-3) circle (1pt); 	\node[above right,gray] at (1,-2,-3) {\tiny$3\bar2\bar1$};
\draw[fill] (-1,2,-3) circle (1pt); 	\node[below] at (-1,2,-3) {\tiny$\bar3 2\bar1$};
\draw[fill] (2,-1,-3) circle (1pt); 	\node[below] at (2,-1,-3) {\tiny$2 \bar3 \bar1$};
\draw[fill] (-2,1,-3) circle (1pt); 	\node[above, gray] at (-2,1,-3) {\tiny$\bar2 3 \bar1$};

\draw[thick] (23b1b)--(3b21b);
\draw[dotted] (23b1b)--(32b1b)--(2b31b)--(3b21b);

\draw[thick] (1,2,3)--(2,1,3)--(3,1,2)--(3,2,1)--(2,3,1)--(1,3,2)--cycle;







\draw[dotted] (-1,-2,3)--(-2,-1,3)--(-3,-1,2)--(-3,-2,1)--(-2,-3,1)--(-1,-3,2)--cycle;


\draw[thick] (21b3b)--(12b3b)--(132)--(231)--(3b2b1)--(3b1b2)--cycle;
\draw[thick] (321)--(312)--(2b13b)--(1b23b)--(1b3b2)--(2b3b1)--cycle;

\draw[thick] (3b21b)--(3b12b);
\draw[dotted] (2b31b)--(1b32b);
\draw[thick] (13b2b)--(23b1b);
\draw[dotted] (31b2b)--(32b1b);
\end{scope}

\end{tikzpicture}
\caption{Weight polytopes of $P_{D_n}$ dimension $2$ and $3$.}
\label{fig_weight_Poly_dim2_3TypeD}
\end{figure}
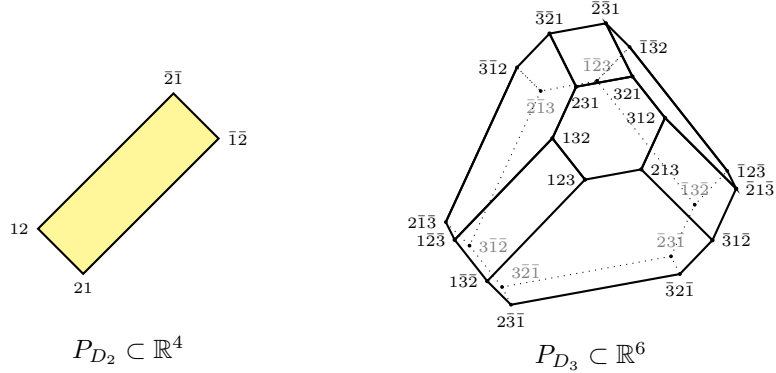

\begin{definition} \label{def:MDn} 
$\M_{D_n}\colonequals \{I \in \M_{B_n} \mid |I|\neq n-1\}$.
\end{definition}

Every facet of $P_{D_n}$ is of the form
\[
F(\indI)=\left\{(x_1, \dots, x_{n}, x_{\bar n},\dots,x_{\bar 1}) \in P_{D_n} \mid \sum_{i\in I}x_i =\sum_{i\in \mathcal{I}}a_i\right\}
\]
for some $I\in \M_{D_n}$, where 
\[
\mathcal{I}=\begin{cases} 
[n-1]\sqcup\{\bar{n}\} \quad &\text{if $|I|=n$ and $|I\cap \overline{[n]}|$ is odd},\\
[|I|] \quad &\text{otherwise}.
\end{cases}
\]
Every such $F(\indI)$ is a facet.  An (inward) normal vector to $F(I)$ in $E$, regarded as an element of the dual space $E^*$, is given by $e_I=\sum_{i\in I}e_i$ as in the type $B_n$ case.

\begin{figure}
\begin{tikzpicture}[scale=0.8]

\draw[dotted] (-1,-2)--(1,-2)--(2,-1)--(2,1)--(1,2)--(-1,2)--(-2,1)--(-2,-1)--cycle;
\draw (-1,-2)--(-2,-1)--(1,2)--(2,1)--cycle;
\node at (-1.5,-1.5) {\tiny$F(12)$};
\node at (-0.5,0.5) {\tiny$F(1\bar2)$};
\node at (1.5,1.5) {\tiny$F(\bar1\bar2)$};
\node at (0.5,-0.5) {\tiny$F(\bar12)$};


\begin{scope}[scale=0.6, rotate around x=-90, rotate around z=-105, xshift=400]
\coordinate (132) at (3,1,2);
\coordinate (13b2) at (3,-1,2);
\coordinate (12b3) at (3,-2,1);
\coordinate (12b3b) at (3,-2,-1);
\coordinate (13b2b) at (3,-1,-2);
\coordinate (132b) at (3,1,-2);
\coordinate (12b3) at (3,2,-1);
\coordinate (123) at (3,2,1);

\coordinate (1b32) at (-3,1,2);
\coordinate (1b3b2) at (-3,-1,2);
\coordinate (1b2b3) at (-3,-2,1);
\coordinate (1b2b3b) at (-3,-2,-1);
\coordinate (1b3b2b) at (-3,-1,-2);
\coordinate (1b32b) at (-3,1,-2);
\coordinate (1b23b) at (-3,2,-1);
\coordinate (1b23) at (-3,2,1);

\coordinate (312) at (1,3,2);
\coordinate (3b12) at (-1,3,2);
\coordinate (2b13) at (-2,3,1);
\coordinate (2b13b) at (-2,3,-1);
\coordinate (3b12b) at (-1,3,-2);
\coordinate (312b) at (1,3,-2);
\coordinate (213b) at (2,3,-1);
\coordinate (213) at (2,3,1);

\coordinate (31b2) at (1,-3,2);
\coordinate (3b1b2) at (-1,-3,2);
\coordinate (2b1b3) at (-2,-3,1);
\coordinate (2b1b3b) at (-2,-3,-1);
\coordinate (3b1b2b) at (-1,-3,-2);
\coordinate (31b2b) at (1,-3,-2);
\coordinate (21b3b) at (2,-3,-1);
\coordinate (21b3) at (2,-3,1);

\coordinate (321) at (1,2,3);
\coordinate (32b1) at (-1,-2,3);
\coordinate (3b21) at (-1,2,3);
\coordinate (3b2b1) at (-1,-2,3);
\coordinate (231) at (2,1,3);
\coordinate (23b1) at (2,-1,3);
\coordinate (2b31) at (-2,1,3);
\coordinate (2b3b1) at (-2,-1,3);

\coordinate (321b) at (1,2,-3);
\coordinate (32b1b) at (1,-2,-3);
\coordinate (3b21b) at (-1,2,-3);
\coordinate (3b2b1b) at (-1,-2,-3);
\coordinate (231b) at (2,1,-3);
\coordinate (23b1b) at (2,-1,-3);
\coordinate (2b31b) at (-2,1,-3);
\coordinate (2b3b1b) at (-2,-1,-3);

\draw[fill] (1,2,-3) circle (1pt); 	
\draw[fill] (1,-2,-3) circle (1pt); 	
\draw[fill] (-1,2,-3) circle (1pt);	
\draw[fill] (-1,-2,-3) circle (1pt);	
\draw[fill] (2,1,-3) circle (1pt);  	
\draw[fill] (2,-1,-3) circle (1pt); 	
\draw[fill] (-2,1,-3) circle (1pt); 	
\draw[fill] (-2,-1,-3) circle (1pt);  

\draw[dotted] (1,2,-3)--(-1,2,-3)--(-2,1,-3)--(-2,-1,-3)--(-1,-2,-3)--(1,-2,-3)--(2,-1,-3)--(2,1,-3)--cycle;

\draw[fill] (1,2,3) circle (1.2pt);	
\draw[fill] (1,-2,3) circle (1.2pt); 	
\draw[fill] (-1,2,3) circle (1.2pt); 	
\draw[fill] (-1,-2,3) circle (1.2pt); 
\draw[fill] (2,1,3) circle (1.2pt); 	
\draw[fill] (2,-1,3) circle (1.2pt); 	
\draw[fill] (-2,1,3) circle (1.2pt); 	
\draw[fill] (-2,-1,3) circle (1.2pt);	

\draw[dotted] (1,2,3)--(-1,2,3)--(-2,1,3)--(-2,-1,3)--(-1,-2,3)--(1,-2,3)--(2,-1,3)--(2,1,3)--cycle;
\node at (0,0,3) {\tiny$F(3)$};

\draw[fill] (-3,1,2) circle (1pt); 	
\draw[fill] (-3,-1,2) circle (1pt); 	
\draw[fill] (-3,-2,1) circle (1pt); 	
\draw[fill] (-3,-2,-1) circle (1pt); 	
\draw[fill] (-3,-1,-2) circle (1pt); 	
\draw[fill] (-3,1,-2) circle (1pt); 	
\draw[fill] (-3,2,-1) circle (1pt); 	
\draw[fill] (-3,2,1) circle (1pt); 	

\draw[dotted] (-3,1,2)--(-3,-1,2)--(-3,-2,1)--(-3,-2,-1)--(-3,-1,-2)--(-3,1,-2)--(-3,2,-1)--(-3,2,1)--cycle;

\draw[fill] (3,1,2) circle (1.2pt); 	
\draw[fill] (3,-1,2) circle (1.2pt); 	
\draw[fill] (3,-2,1) circle (1.2pt); 	
\draw[fill] (3,-2,-1) circle (1.2pt); 
\draw[fill] (3,-1,-2) circle (1.2pt); 
\draw[fill] (3,1,-2) circle (1.2pt); 	
\draw[fill] (3,2,-1) circle (1.2pt); 	
\draw[fill] (3,2,1) circle (1.2pt); 	

\draw[dotted] (3,1,2)--(3,-1,2)--(3,-2,1)--(3,-2,-1)--(3,-1,-2)--(3,1,-2)--(3,2,-1)--(3,2,1)--cycle;
\node at (3,0,0) {\tiny$F(1)$};

\draw[fill] (1,-3,2) circle (1pt); 	
\draw[fill] (-1,-3,2) circle (1pt); 	
\draw[fill] (-2,-3,1) circle (1pt); 	
\draw[fill] (-2,-3,-1) circle (1pt); 	
\draw[fill] (-1,-3,-2) circle (1pt); 	
\draw[fill] (1,-3,-2) circle (1pt); 	
\draw[fill] (2,-3,-1) circle (1pt); 	
\draw[fill] (2,-3,1) circle (1pt); 	

\draw[dotted] (1,-3,2)--(-1,-3,2)--(-2,-3,1)--(-2,-3,-1)--(-1,-3,-2)--(1,-3,-2)--(2,-3,-1)--(2,-3,1)--cycle;

\draw[fill] (1,3,2) circle (1.2pt);	
\draw[fill] (-1,3,2) circle (1.2pt); 	
\draw[fill] (-2,3,1) circle (1.2pt); 	
\draw[fill] (-2,3,-1) circle (1.2pt); 
\draw[fill] (-1,3,-2) circle (1.2pt);	
\draw[fill] (1,3,-2) circle (1.2pt);	
\draw[fill] (2,3,-1) circle (1.2pt); 	
\draw[fill] (2,3,1) circle (1.2pt); 	

\draw[dotted] (1,3,2)--(-1,3,2)--(-2,3,1)--(-2,3,-1)--(-1,3,-2)--(1,3,-2)--(2,3,-1)--(2,3,1)--cycle;
\node at (0,3,0) {\tiny$F(2)$};


\draw[thick] (1,2,3)--(2,1,3)--(3,1,2)--(3,2,1)--(2,3,1)--(1,3,2)--cycle;
\node at (2,2,2) {\tiny$F(123)$};
\draw[dotted] (-1,2,3)--(-2,1,3)--(-3,1,2)--(-3,2,1)--(-2,3,1)--(-1,3,2)--cycle;
\draw[dashed, thick] (-1,2,-3)--(-2,1,-3)--(-3,1,-2)--(-3,2,-1)--(-2,3,-1)--(-1,3,-2)--cycle;
\draw[dotted] (1,-2,3)--(2,-1,3)--(3,-1,2)--(3,-2,1)--(2,-3,1)--(1,-3,2)--cycle;
\draw[dashed, thick] (1,-2,-3)--(2,-1,-3)--(3,-1,-2)--(3,-2,-1)--(2,-3,-1)--(1,-3,-2)--cycle;
\draw[dotted] (-1,-2,-3)--(-2,-1,-3)--(-3,-1,-2)--(-3,-2,-1)--(-2,-3,-1)--(-1,-3,-2)--cycle;
\draw[dotted] (1,2,-3)--(2,1,-3)--(3,1,-2)--(3,2,-1)--(2,3,-1)--(1,3,-2)--cycle;
\draw[dashed, thick] (-1,-2,3)--(-2,-1,3)--(-3,-1,2)--(-3,-2,1)--(-2,-3,1)--(-1,-3,2)--cycle;

\draw[dotted] (2,-3,1)--(2,-3,-1)--(3,-2,-1);
\draw[dotted] (3,-1,-2)--(2,-1,-3)--(2,1,-3);
\draw[dotted] (1,2,-3)--(-1,2,-3)--(-1,3,-2);

\draw[thick] (231)--(321)--(2b3b1)--(3b2b1)--cycle;
\draw[thick] (132)--(123)--(13b2b)--(12b3b)--cycle;
\draw[thick] (231)--(3b2b1)--(3b1b2)--(21b3b)--(12b3b)--(132)--cycle;
\node at (2.2,-1,2 ){\tiny$F(1\bar23)$};
\draw[thick] (123)--(213)--(3b12b)--(3b21b)--(23b1b)--(13b2b)--cycle;
\node at (8/6,8/6,-8/6) {\tiny$F(12\bar3)$};

\draw[thick] (213)--(312)--(2b13b)--(3b12b)--cycle;
\draw[thick] (321)--(312)--(2b13b)--(1b23b)--(1b3b2)--(2b3b1)--cycle;
\draw[thick, dashed] (2b1b3)--(31b2b); 
\draw[thick, dashed] (32b1b)--(2b31b);
\draw[thick, dashed] (1b2b3)--(1b32b);
\end{scope}

\end{tikzpicture}
\caption{Labeling on facets of $P_{D_n}$ for $n=2,3$.}
\label{fig_labeling_on_PDn}
\end{figure}
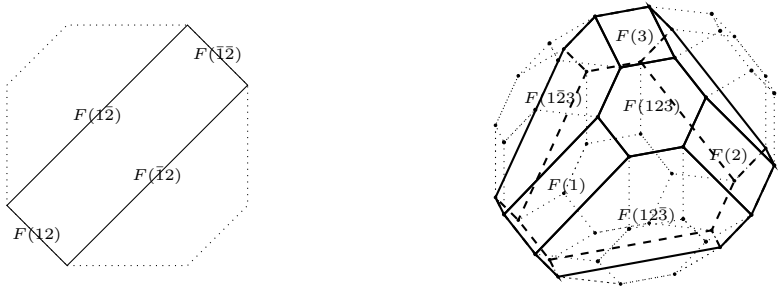

The following lemma corresponds to Lemma~\ref{lem_A_facets_intersecting}, and gives a necessary and sufficient condition for two facets to intersect nontrivially. 

\begin{lemma}\label{lem_D_facets_intersecting}
Let $F(I), F(J)$ be two facets of $P_{D_n}$ corresponding to $I,J\in \M_{D_n}$, respectively. 
\begin{enumerate}
\item Suppose that $|\indI| \neq n$ or $|\indJ| \neq n$. Then, $\FA$ and $\FB$ intersect nontrivially if and only if $\indI\subset \indJ$ or $\indJ\subset \indI$. 
\item Suppose that $|\indI|=|\indJ|=n$. Then, $\FA$ and $\FB$ intersect nontrivially if and only if $\indI=\indJ$ or $\indI=(\indJ \setminus \{j \}) \cup \{\bar{j} \}$ for some $j \in \indJ$. 
\end{enumerate}
Equivalently, $\FA$ and $\FB$ do not intersect if and only if $\indI\not\subset \indJ$, $\indJ\not\subset \indI$ and $|\indI \cap \indJ| \leq n-2$, the last condition following from Definition~\ref{def:MDn}. 
\end{lemma}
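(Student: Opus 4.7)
My plan is to parallel Lemma~\ref{lem_A_facets_intersecting}, accommodating the two new features of type $D$: the signed structure of elements of $\M_{D_n}$, and the exceptional behavior at $|I|=n$. The main tool will be a uniform description of $F(I)$ in terms of even-signed permutations. For $I \in \M_{D_n}$ put $P_I := I \cap [n]$ and $N_I := \{i \in [n] : \bar i \in I\}$, and for $u \in W_{D_n}$ define $v^I(i) := u(i)$ if $i \in P_I$ and $v^I(i) := \overline{u(i)}$ if $i \in N_I$. A short minimization argument will then show that $v_u \in F(I)$ iff $\{v^I(i) : i \in P_I \cup N_I\}$ equals $[|I|]$ when $|I|<n$, or equals $[n]$ (resp.\ $[n-1] \cup \{\bar n\}$) when $|I|=n$ and $|I \cap \overline{[n]}|$ is even (resp.\ odd). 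The two subcases at $|I|=n$ arise because the unconstrained minimizer forces $|I \cap \overline{[n]}|$ bars into $u$, which may violate the even-signed condition and require a single swap $n \leftrightarrow \bar n$.

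For the ``if'' direction I will construct common vertices explicitly. When $I \subset J$ with $|J| \leq n-2$, I extend an optimal assignment on $P_I \cup N_I$ to one on $P_J \cup N_J$, and use the $\geq 2$ remaining coordinates to correct parity; when $|J|=n$, the freedom instead lies in the complement $[n] \setminus (P_I \cup N_I)$, which can be arranged to hit $[n]$ or $[n-1] \cup \{\bar n\}$ according to $|J_-|$'s parity. For Part~(2) with $|D|=1$ (say $I$ and $J$ differ at a single signed position $i_0$), the parities $|I_-|$ and $|J_-|$ are automatically opposite, and a direct check will show that choosing $u(i_0) \in \{n, \bar n\}$ appropriately simultaneously satisfies both facet conditions at $i_0$; the remaining $n-1$ positions then admit an even-signed bijection onto a suitable signed subset of $[n-1] \cup \overline{[n-1]}$.

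For the ``only if'' direction, assume $v_u \in F(I) \cap F(J)$. In Part~(1), without loss of generality take $|I|\leq|J|$, pick $i_0 \in I \setminus J$, and let $i_0' \in [n]$ be $i_0$ if $i_0 \in [n]$, or the debar of $i_0$ otherwise. The pivotal observation is that for $v_u \in F(J)$ with $|J|<n$, the $|J|$ pairs $\{k,\bar k\}_{k\leq|J|}$ are saturated by $u|_{P_J \cup N_J}$, so any $u(i)$ with $i \notin P_J \cup N_J$ must lie in $\{|J|+1,\dots,n\} \cup \overline{\{|J|+1,\dots,n\}}$. A case split on whether $i_0'$ lies in $P_J$, in $N_J$, or in neither then forces $a_{u(i_0')}$ into a set disjoint from the values $F(I)$ allows at that coordinate --- the contradiction. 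The case $|J|=n$ is analogous, using the $[n]$ or $[n-1]\cup\{\bar n\}$ target for $v^J$ combined with $|I|\leq n-2$. For Part~(2), I split into four subcases by the parities of $|I_-|$ and $|J_-|$: matching parities immediately force $P_I \cap N_J = N_I \cap P_J = \emptyset$, yielding $I=J$; differing parities allow at most one of these two sets to be nonempty (since only one of $n, \bar n$ can appear among $\{u(i)\}_{i\in[n]}$) and it must have size at most one, giving $|D|=1$.

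The main obstacle I anticipate is the bookkeeping around the exceptional coordinate $\bar n$ that appears when $|I_-|$ or $|J_-|$ is odd. In Part~(2)'s ``only if'' direction the four-parity analysis rests delicately on pair-disjointness of $\{u(i)\}$ pinning down a single exceptional index, while in its ``if'' direction the construction must decide on which side of $I$ or $J$ the swap at $n$ is absorbed. A secondary point of care is the exclusion of size $n-1$ from $\M_{D_n}$: several steps in both directions require $|[n] \setminus (P_I \cup N_I)| \geq 2$, which is precisely the reason that $|I|=n-1$ must be forbidden from the outset.
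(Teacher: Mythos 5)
Your proposal is correct and takes essentially the same route the paper intends: the lemma is stated in Appendix B as the type-$D$ analogue of Lemma \ref{lem_A_facets_intersecting}, and your argument is exactly the natural adaptation of that proof — characterize the vertices of $F(I)$ as the even-signed permutations $u$ with $u(I)=[|I|]$ (resp.\ $[n]$ or $[n-1]\cup\{\bar n\}$ when $|I|=n$, according to the parity of $|I\cap\overline{[n]}|$), exhibit a common vertex in the "if" direction, and derive a coordinate contradiction from an element of $I\setminus J$ in the "only if" direction. The parity bookkeeping for the even-signed condition and the single-swap analysis in part (2) are handled correctly, and your observation that $|I|=n-1$ being excluded is what makes the parity corrections possible is exactly the right point of care.
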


With this understood, Proposition~\ref{prop:A_cohom_of_perm}  holds for type $D_n$ if we add the condition $|I\cap J|\le n-2$ to the condition $I\not\subset J$ and $J\not\subset I$ in (2) of Proposition~\ref{prop:A_cohom_of_perm}. The modification arises from Lemma~\ref{lem_D_facets_intersecting}.

Let $K$ be a subset of $[n]$ and $W_K$ the subgroup of $W_{D_n}$ generated by $\{s_k:k\in K\}$.  Then, the $W_K$-orbit decomposition of $[n]\sqcup \overline{[n]}$ is of the following form:
\begin{equation} \label{eq:D_orbit_decomposition}
[n]\sqcup \overline{[n]}=\bigsqcup_{i=1}^{m}(N_i\sqcup \overline{N_i})\sqcup (N'\sqcup\overline{N'}) \sqcup N'',
\end{equation}
where $m$ is some non-negative integer, $N_i\subset [n]$ for each $i$, $\{n-1,\bar{n}\}\subset N'$ if $n\in K$ but $n-1\notin K$, $N'=\emptyset$ otherwise, $\{n,\bar{n}\}\subset N''=\overline{N''}$ if $\{n-1,n\}\subset K$, and  $N''=\emptyset$ otherwise. 
We note that either $N'$ or $N''$ is empty for arbitrary subset $K$ of $[n]$.

\begin{example}\label{ex_D_3_orbit_decomp}
Take $n=3$.  
\begin{enumerate}
\item If $K=\{1\}$, then $[3]\sqcup \overline{[3]}=\{1,2\} \sqcup \{\bar2, \bar1\}\sqcup  \{3\}\sqcup \{\bar 3\}$.
\item If $K=\{1,2\}$, then $[3]\sqcup \overline{[3]}=\{1,2,3\} \sqcup \{\bar3, \bar2, \bar1\}$.
\item If $K=\{1,3\}$, then $[3]\sqcup \overline{[3]}=\{1,2, \bar3\} \sqcup \{3, \bar2, \bar1\}$.
\item If $K=\{2,3\}$, then $[3]\sqcup \overline{[3]}=\{1\}\sqcup \{\bar 1\} \sqcup \{2,3,\bar3, \bar2\}$. 
\end{enumerate}
\end{example}

Using the $W_K$-orbit decompositions \eqref{eq:D_orbit_decomposition}, one can check that Lemma \ref{lem_invariants} and Proposition \ref{prop_A_generated_by_deg2} hold for type $D_n$.

\subsection{Partitioned weight polytope of type $D_n$}\label{subsec_D_part_weight_poly}

If $\Phi$ is of type $D_n$, then for every $k \in [n]$, the half-space $\Hk^\leq$ satisfies 
\begin{align*} \label{eq_Hk}
\Hk^{\leq}&=\{ x \in E \mid x_k \leq x_{k+1}\}, \quad  {\rm for} \ 1 \leq k \leq n-1;\\
\Hn^{\leq}&=\{x \in E \mid x_{n-1}\leq x_{\bar n}\},
\end{align*}
and the partitioned weight polytope $P_{D_n}$ associated with a subset $K\subset [n]$ is defined by 
\begin{equation*}
P_{D_n}(K) \colonequals P_{D_n} \cap \bigcap_{k \in K} \Hk^{\leq}.
\end{equation*}
See Figure \ref{fig_part_weight_poly_D} for several examples of partitioned weight polytopes in type $D_3$

\begin{figure}
\begin{tikzpicture}[scale=0.9]

\begin{scope}[scale=0.4, rotate around x=-90, rotate around z=-105, yshift=-350]

\coordinate (132) at (3,1,2);
\coordinate (13b2) at (3,-1,2);
\coordinate (12b3) at (3,-2,1);
\coordinate (12b3b) at (3,-2,-1);
\coordinate (13b2b) at (3,-1,-2);
\coordinate (132b) at (3,1,-2);
\coordinate (12b3) at (3,2,-1);
\coordinate (123) at (3,2,1);

\draw[fill] (3,1,2) circle (1.2pt); 	
\draw[fill] (3,-2,-1) circle (1.2pt); 
\draw[fill] (3,-1,-2) circle (1.2pt); 
\draw[fill] (3,2,1) circle (1.2pt); 	

\draw (132)--(123)--(13b2b)--(12b3b)--cycle;

\coordinate (1b32) at (-3,1,2);
\coordinate (1b3b2) at (-3,-1,2);
\coordinate (1b2b3) at (-3,-2,1);
\coordinate (1b2b3b) at (-3,-2,-1);
\coordinate (1b3b2b) at (-3,-1,-2);
\coordinate (1b32b) at (-3,1,-2);
\coordinate (1b23b) at (-3,2,-1);
\coordinate (1b23) at (-3,2,1);

\draw[fill] (-3,-1,2) circle (1pt); 	
\draw[fill] (-3,-2,1) circle (1pt); 	
\draw[fill] (-3,1,-2) circle (1pt); 	
\draw[fill] (-3,2,-1) circle (1pt); 	

\draw(1b3b2)--(1b23b);
\draw[dotted] (1b3b2)--(1b2b3)--(1b32b)--(1b23b);

\coordinate (312) at (1,3,2);
\coordinate (3b12) at (-1,3,2);
\coordinate (2b13) at (-2,3,1);
\coordinate (2b13b) at (-2,3,-1);
\coordinate (3b12b) at (-1,3,-2);
\coordinate (312b) at (1,3,-2);
\coordinate (213b) at (2,3,-1);
\coordinate (213) at (2,3,1);

\draw[fill] (1,3,2) circle (1.2pt);	
\draw[fill] (-2,3,-1) circle (1.2pt); 
\draw[fill] (-1,3,-2) circle (1.2pt);	
\draw[fill] (2,3,1) circle (1.2pt); 	

\draw (213)--(312)--(2b13b)--(3b12b)--cycle;

\coordinate (31b2) at (1,-3,2);
\coordinate (3b1b2) at (-1,-3,2);
\coordinate (2b1b3) at (-2,-3,1);
\coordinate (2b1b3b) at (-2,-3,-1);
\coordinate (3b1b2b) at (-1,-3,-2);
\coordinate (31b2b) at (1,-3,-2);
\coordinate (21b3b) at (2,-3,-1);
\coordinate (21b3) at (2,-3,1);

\draw[fill] (-1,-3,2) circle (1pt); 	
\draw[fill] (-2,-3,1) circle (1pt); 	
\draw[fill] (1,-3,-2) circle (1pt); 	
\draw[fill] (2,-3,-1) circle (1pt); 	

\draw(21b3b)--(3b1b2);
\draw[dotted] (21b3b)--(31b2b)--(2b1b3)--(3b1b2);

\coordinate (321) at (1,2,3);
\coordinate (32b1) at (-1,-2,3);
\coordinate (3b21) at (-1,2,3);
\coordinate (3b2b1) at (-1,-2,3);
\coordinate (231) at (2,1,3);
\coordinate (23b1) at (2,-1,3);
\coordinate (2b31) at (-2,1,3);
\coordinate (2b3b1) at (-2,-1,3);

\draw[fill] (1,2,3) circle (1.2pt);	
\draw[fill] (-1,-2,3) circle (1.2pt); 
\draw[fill] (2,1,3) circle (1.2pt); 	
\draw[fill] (-2,-1,3) circle (1.2pt);	
\draw (231)--(321)--(2b3b1)--(3b2b1)--cycle;

\coordinate (321b) at (1,2,-3);
\coordinate (32b1b) at (1,-2,-3);
\coordinate (3b21b) at (-1,2,-3);
\coordinate (3b2b1b) at (-1,-2,-3);
\coordinate (231b) at (2,1,-3);
\coordinate (23b1b) at (2,-1,-3);
\coordinate (2b31b) at (-2,1,-3);
\coordinate (2b3b1b) at (-2,-1,-3);

\draw[fill] (1,-2,-3) circle (1pt); 	
\draw[fill] (-1,2,-3) circle (1pt); 	
\draw[fill] (2,-1,-3) circle (1pt); 	
\draw[fill] (-2,1,-3) circle (1pt); 	

\draw (23b1b)--(3b21b);
\draw[dotted] (23b1b)--(32b1b)--(2b31b)--(3b21b);

\draw (1,2,3)--(2,1,3)--(3,1,2)--(3,2,1)--(2,3,1)--(1,3,2)--cycle;

\draw[dotted] (-1,-2,3)--(-2,-1,3)--(-3,-1,2)--(-3,-2,1)--(-2,-3,1)--(-1,-3,2)--cycle;

\draw (21b3b)--(12b3b)--(132)--(231)--(3b2b1)--(3b1b2)--cycle;
\draw (321)--(312)--(2b13b)--(1b23b)--(1b3b2)--(2b3b1)--cycle;

\draw (3b21b)--(3b12b);
\draw[dotted] (2b31b)--(1b32b);
\draw (13b2b)--(23b1b);
\draw[dotted] (31b2b)--(32b1b);

\draw[fill=yellow, opacity=0.5]  ($1/2*(3b2b1)+1/2*(2b3b1)$)--($1/2*(231)+1/2*(321)$)--($1/2*(123)+1/2*(213)$)--($1/2*(23b1b)+1/2*(3b21b)$)--(23b1b)--(13b2b)--(12b3b)--(21b3b)--(3b1b2)--(3b2b1)--cycle;
\draw[thick, blue] ($1/2*(3b2b1)+1/2*(2b3b1)$)--($1/2*(231)+1/2*(321)$)--($1/2*(123)+1/2*(213)$)--($1/2*(23b1b)+1/2*(3b21b)$)--(23b1b)--(13b2b)--(12b3b)--(21b3b)--(3b1b2)--(3b2b1)--cycle;

\draw[ thick, blue, dotted] ($1/2*(3b2b1)+1/2*(2b3b1)$)--($1/2*(2b1b3)+1/2*(1b2b3)$)--($1/2*(32b1b)+1/2*(2b31b)$)--($1/2*(23b1b)+1/2*(3b21b)$);

\draw[ thick, blue, dotted] (3b1b2)--(2b1b3)--($1/2*(2b1b3)+1/2*(1b2b3)$);
\draw[ thick, blue, dotted] (2b1b3)--(31b2b)--(21b3b);
\draw[ thick, blue, dotted] (31b2b)--(32b1b)--(23b1b);
\draw[ thick, blue, dotted] (32b1b)--($1/2*(32b1b)+1/2*(2b31b)$);

\draw[thick, blue] (3b2b1)--(231)--(132)--(12b3b);
\draw[thick, blue] (132)--(123)--(13b2b);
\draw[thick, blue] (123)--($1/2*(123)+1/2*(213)$);
\draw[thick, blue] (231)--($1/2*(231)+1/2*(321)$);

\node at (0,0,-5) {$P_{D_3}(\{1\})$};
\end{scope}


\begin{scope}[scale=0.4, rotate around x=-90, rotate around z=-105, yshift=-350, xshift=250]

\coordinate (132) at (3,1,2);
\coordinate (13b2) at (3,-1,2);
\coordinate (12b3) at (3,-2,1);
\coordinate (12b3b) at (3,-2,-1);
\coordinate (13b2b) at (3,-1,-2);
\coordinate (132b) at (3,1,-2);
\coordinate (12b3) at (3,2,-1);
\coordinate (123) at (3,2,1);

\draw[fill] (3,1,2) circle (1.2pt); 	
\draw[fill] (3,-2,-1) circle (1.2pt); 
\draw[fill] (3,-1,-2) circle (1.2pt); 
\draw[fill] (3,2,1) circle (1.2pt); 	

\draw (132)--(123)--(13b2b)--(12b3b)--cycle;

\coordinate (1b32) at (-3,1,2);
\coordinate (1b3b2) at (-3,-1,2);
\coordinate (1b2b3) at (-3,-2,1);
\coordinate (1b2b3b) at (-3,-2,-1);
\coordinate (1b3b2b) at (-3,-1,-2);
\coordinate (1b32b) at (-3,1,-2);
\coordinate (1b23b) at (-3,2,-1);
\coordinate (1b23) at (-3,2,1);

\draw[fill] (-3,-1,2) circle (1pt); 	
\draw[fill] (-3,-2,1) circle (1pt); 	
\draw[fill] (-3,1,-2) circle (1pt); 	
\draw[fill] (-3,2,-1) circle (1pt); 	

\draw(1b3b2)--(1b23b);
\draw[dotted] (1b3b2)--(1b2b3)--(1b32b)--(1b23b);

\coordinate (312) at (1,3,2);
\coordinate (3b12) at (-1,3,2);
\coordinate (2b13) at (-2,3,1);
\coordinate (2b13b) at (-2,3,-1);
\coordinate (3b12b) at (-1,3,-2);
\coordinate (312b) at (1,3,-2);
\coordinate (213b) at (2,3,-1);
\coordinate (213) at (2,3,1);

\draw[fill] (1,3,2) circle (1.2pt);	
\draw[fill] (-2,3,-1) circle (1.2pt); 
\draw[fill] (-1,3,-2) circle (1.2pt);	
\draw[fill] (2,3,1) circle (1.2pt); 	

\draw (213)--(312)--(2b13b)--(3b12b)--cycle;

\coordinate (31b2) at (1,-3,2);
\coordinate (3b1b2) at (-1,-3,2);
\coordinate (2b1b3) at (-2,-3,1);
\coordinate (2b1b3b) at (-2,-3,-1);
\coordinate (3b1b2b) at (-1,-3,-2);
\coordinate (31b2b) at (1,-3,-2);
\coordinate (21b3b) at (2,-3,-1);
\coordinate (21b3) at (2,-3,1);

\draw[fill] (-1,-3,2) circle (1pt); 	
\draw[fill] (-2,-3,1) circle (1pt); 	
\draw[fill] (1,-3,-2) circle (1pt); 	
\draw[fill] (2,-3,-1) circle (1pt); 	

\draw(21b3b)--(3b1b2);
\draw[dotted] (21b3b)--(31b2b)--(2b1b3)--(3b1b2);

\coordinate (321) at (1,2,3);
\coordinate (32b1) at (-1,-2,3);
\coordinate (3b21) at (-1,2,3);
\coordinate (3b2b1) at (-1,-2,3);
\coordinate (231) at (2,1,3);
\coordinate (23b1) at (2,-1,3);
\coordinate (2b31) at (-2,1,3);
\coordinate (2b3b1) at (-2,-1,3);

\draw[fill] (1,2,3) circle (1.2pt);	
\draw[fill] (-1,-2,3) circle (1.2pt); 
\draw[fill] (2,1,3) circle (1.2pt); 	
\draw[fill] (-2,-1,3) circle (1.2pt);	
\draw (231)--(321)--(2b3b1)--(3b2b1)--cycle;

\coordinate (321b) at (1,2,-3);
\coordinate (32b1b) at (1,-2,-3);
\coordinate (3b21b) at (-1,2,-3);
\coordinate (3b2b1b) at (-1,-2,-3);
\coordinate (231b) at (2,1,-3);
\coordinate (23b1b) at (2,-1,-3);
\coordinate (2b31b) at (-2,1,-3);
\coordinate (2b3b1b) at (-2,-1,-3);

\draw[fill] (1,-2,-3) circle (1pt); 	
\draw[fill] (-1,2,-3) circle (1pt); 	
\draw[fill] (2,-1,-3) circle (1pt); 	
\draw[fill] (-2,1,-3) circle (1pt); 	

\draw (23b1b)--(3b21b);
\draw[dotted] (23b1b)--(32b1b)--(2b31b)--(3b21b);

\draw (1,2,3)--(2,1,3)--(3,1,2)--(3,2,1)--(2,3,1)--(1,3,2)--cycle;

\draw[dotted] (-1,-2,3)--(-2,-1,3)--(-3,-1,2)--(-3,-2,1)--(-2,-3,1)--(-1,-3,2)--cycle;

\draw (21b3b)--(12b3b)--(132)--(231)--(3b2b1)--(3b1b2)--cycle;
\draw (321)--(312)--(2b13b)--(1b23b)--(1b3b2)--(2b3b1)--cycle;

\draw (3b21b)--(3b12b);
\draw[dotted] (2b31b)--(1b32b);
\draw (13b2b)--(23b1b);
\draw[dotted] (31b2b)--(32b1b);

\draw[fill=yellow, opacity=0.5] 
($1/2*(123)+1/2*(213)$)--
($1/2*(231)+1/2*(321)$)--
($1/2*(3b2b1)+1/2*(2b3b1)$)--
(3b2b1)--
($1/2*(3b1b2)+1/2*(3b2b1)$)--
($1/2*(12b3b)+1/2*(132)$)--
($1/2*(13b2b)+1/2*(123)$)--
($1/3*(123)+1/3*(23b1b)+1/3*(3b12b)$)--cycle;

\draw[thick, blue] ($1/2*(123)+1/2*(213)$)--($1/2*(231)+1/2*(321)$)--($1/2*(3b2b1)+1/2*(2b3b1)$)--(3b2b1)--($1/2*(3b1b2)+1/2*(3b2b1)$)--($1/2*(12b3b)+1/2*(132)$)--($1/2*(13b2b)+1/2*(123)$)--($1/3*(123)+1/3*(23b1b)+1/3*(3b12b)$)--cycle;

\draw[thick, blue, dotted] ($1/2*(3b1b2)+1/2*(3b2b1)$)--($1/2*(3b1b2)+1/2*(1b3b2)$)--($1/2*(3b2b1)+1/2*(2b3b1)$);
\draw[thick, blue, dotted] ($1/2*(3b1b2)+1/2*(1b3b2)$)--($1/3*(123)+1/3*(23b1b)+1/3*(3b12b)$);

\draw[thick, blue] (3b2b1)--(231)--($1/2*(231)+1/2*(321)$);
\draw[thick, blue] (231)--(132)--(123)--($1/2*(123)+1/2*(213)$);
\draw[thick, blue] (123)--($1/2*(13b2b)+1/2*(123)$);
\draw[thick, blue] (132)--($1/2*(12b3b)+1/2*(132)$);

\node at (0,0,-5) {$P_{D_3}(\{1,3\})$};
\end{scope}

\begin{scope}[scale=0.4, rotate around x=-90, rotate around z=-105, yshift=-350, xshift=500]

\coordinate (132) at (3,1,2);
\coordinate (13b2) at (3,-1,2);
\coordinate (12b3) at (3,-2,1);
\coordinate (12b3b) at (3,-2,-1);
\coordinate (13b2b) at (3,-1,-2);
\coordinate (132b) at (3,1,-2);
\coordinate (12b3) at (3,2,-1);
\coordinate (123) at (3,2,1);

\draw[fill] (3,1,2) circle (1.2pt); 	
\draw[fill] (3,-2,-1) circle (1.2pt); 
\draw[fill] (3,-1,-2) circle (1.2pt); 
\draw[fill] (3,2,1) circle (1.2pt); 	

\draw (132)--(123)--(13b2b)--(12b3b)--cycle;

\coordinate (1b32) at (-3,1,2);
\coordinate (1b3b2) at (-3,-1,2);
\coordinate (1b2b3) at (-3,-2,1);
\coordinate (1b2b3b) at (-3,-2,-1);
\coordinate (1b3b2b) at (-3,-1,-2);
\coordinate (1b32b) at (-3,1,-2);
\coordinate (1b23b) at (-3,2,-1);
\coordinate (1b23) at (-3,2,1);

\draw[fill] (-3,-1,2) circle (1pt); 	
\draw[fill] (-3,-2,1) circle (1pt); 	
\draw[fill] (-3,1,-2) circle (1pt); 	
\draw[fill] (-3,2,-1) circle (1pt); 	

\draw(1b3b2)--(1b23b);
\draw[dotted] (1b3b2)--(1b2b3)--(1b32b)--(1b23b);

\coordinate (312) at (1,3,2);
\coordinate (3b12) at (-1,3,2);
\coordinate (2b13) at (-2,3,1);
\coordinate (2b13b) at (-2,3,-1);
\coordinate (3b12b) at (-1,3,-2);
\coordinate (312b) at (1,3,-2);
\coordinate (213b) at (2,3,-1);
\coordinate (213) at (2,3,1);

\draw[fill] (1,3,2) circle (1.2pt);	
\draw[fill] (-2,3,-1) circle (1.2pt); 
\draw[fill] (-1,3,-2) circle (1.2pt);	
\draw[fill] (2,3,1) circle (1.2pt); 	

\draw (213)--(312)--(2b13b)--(3b12b)--cycle;

\coordinate (31b2) at (1,-3,2);
\coordinate (3b1b2) at (-1,-3,2);
\coordinate (2b1b3) at (-2,-3,1);
\coordinate (2b1b3b) at (-2,-3,-1);
\coordinate (3b1b2b) at (-1,-3,-2);
\coordinate (31b2b) at (1,-3,-2);
\coordinate (21b3b) at (2,-3,-1);
\coordinate (21b3) at (2,-3,1);

\draw[fill] (-1,-3,2) circle (1pt); 	
\draw[fill] (-2,-3,1) circle (1pt); 	
\draw[fill] (1,-3,-2) circle (1pt); 	
\draw[fill] (2,-3,-1) circle (1pt); 	

\draw(21b3b)--(3b1b2);
\draw[dotted] (21b3b)--(31b2b)--(2b1b3)--(3b1b2);

\coordinate (321) at (1,2,3);
\coordinate (32b1) at (-1,-2,3);
\coordinate (3b21) at (-1,2,3);
\coordinate (3b2b1) at (-1,-2,3);
\coordinate (231) at (2,1,3);
\coordinate (23b1) at (2,-1,3);
\coordinate (2b31) at (-2,1,3);
\coordinate (2b3b1) at (-2,-1,3);

\draw[fill] (1,2,3) circle (1.2pt);	
\draw[fill] (-1,-2,3) circle (1.2pt); 
\draw[fill] (2,1,3) circle (1.2pt); 	
\draw[fill] (-2,-1,3) circle (1.2pt);	
\draw (231)--(321)--(2b3b1)--(3b2b1)--cycle;

\coordinate (321b) at (1,2,-3);
\coordinate (32b1b) at (1,-2,-3);
\coordinate (3b21b) at (-1,2,-3);
\coordinate (3b2b1b) at (-1,-2,-3);
\coordinate (231b) at (2,1,-3);
\coordinate (23b1b) at (2,-1,-3);
\coordinate (2b31b) at (-2,1,-3);
\coordinate (2b3b1b) at (-2,-1,-3);

\draw[fill] (1,-2,-3) circle (1pt); 	
\draw[fill] (-1,2,-3) circle (1pt); 	
\draw[fill] (2,-1,-3) circle (1pt); 	
\draw[fill] (-2,1,-3) circle (1pt); 	

\draw (23b1b)--(3b21b);
\draw[dotted] (23b1b)--(32b1b)--(2b31b)--(3b21b);

\draw (1,2,3)--(2,1,3)--(3,1,2)--(3,2,1)--(2,3,1)--(1,3,2)--cycle;

\draw[dotted] (-1,-2,3)--(-2,-1,3)--(-3,-1,2)--(-3,-2,1)--(-2,-3,1)--(-1,-3,2)--cycle;

\draw (21b3b)--(12b3b)--(132)--(231)--(3b2b1)--(3b1b2)--cycle;
\draw (321)--(312)--(2b13b)--(1b23b)--(1b3b2)--(2b3b1)--cycle;

\draw (3b21b)--(3b12b);
\draw[dotted] (2b31b)--(1b32b);
\draw (13b2b)--(23b1b);
\draw[dotted] (31b2b)--(32b1b);

\draw[fill=yellow, opacity=0.5] ($1/2*(123)+1/2*(132)$)--($1/2*(123)+1/2*(12b3b)$)--($1/2*(123)+1/2*(13b2b)$)--($1/2*(3b21b)+1/2*(3b12b)$)--(3b12b)--(2b13b)--(1b23b)--($1/2*(1b23b)+1/2*(1b3b2)$)--($1/2*(312)+1/2*(321)$)--cycle;

\draw[blue, thick] ($1/2*(123)+1/2*(132)$)--($1/2*(123)+1/2*(12b3b)$)--($1/2*(123)+1/2*(13b2b)$)--($1/2*(3b21b)+1/2*(3b12b)$)--(3b12b)--(2b13b)--(1b23b)--($1/2*(1b23b)+1/2*(1b3b2)$)--($1/2*(312)+1/2*(321)$)--cycle;

\draw[blue, thick] ($1/2*(123)+1/2*(132)$)--(123)--(213)--(312)--($1/2*(312)+1/2*(321)$);
\draw[blue, thick] (123)--($1/2*(123)+1/2*(13b2b)$);
\draw[blue, thick] (213)--(3b12b);
\draw[blue, thick] (312)--(2b13b);

\draw[blue, thick, dotted] ($1/2*(1b3b2)+1/2*(1b23b)$)--($1/2*(1b32b)+1/2*(1b3b2)$)--($1/2*(123)+1/2*(12b3b)$);
\draw[blue, thick, dotted] ($1/2*(1b32b)+1/2*(1b3b2)$)--($1/2*(1b32b)+1/2*(1b23b)$)--(1b23b);
\draw[blue, thick, dotted] ($1/2*(3b21b)+1/2*(3b12b)$)--($1/2*(1b32b)+1/2*(1b23b)$);

%
%
%

\node at (0,0,-5) {$P_{D_3}(\{2,3\})$};
\end{scope}


\begin{scope}[scale=0.4, rotate around x=-90, rotate around z=-105, yshift=-350, xshift=750]

\coordinate (132) at (3,1,2);
\coordinate (13b2) at (3,-1,2);
\coordinate (12b3) at (3,-2,1);
\coordinate (12b3b) at (3,-2,-1);
\coordinate (13b2b) at (3,-1,-2);
\coordinate (132b) at (3,1,-2);
\coordinate (12b3) at (3,2,-1);
\coordinate (123) at (3,2,1);

\draw[fill] (3,1,2) circle (1.2pt); 	
\draw[fill] (3,-2,-1) circle (1.2pt); 
\draw[fill] (3,-1,-2) circle (1.2pt); 
\draw[fill] (3,2,1) circle (1.2pt); 	

\draw (132)--(123)--(13b2b)--(12b3b)--cycle;

\coordinate (1b32) at (-3,1,2);
\coordinate (1b3b2) at (-3,-1,2);
\coordinate (1b2b3) at (-3,-2,1);
\coordinate (1b2b3b) at (-3,-2,-1);
\coordinate (1b3b2b) at (-3,-1,-2);
\coordinate (1b32b) at (-3,1,-2);
\coordinate (1b23b) at (-3,2,-1);
\coordinate (1b23) at (-3,2,1);

\draw[fill] (-3,-1,2) circle (1pt); 	
\draw[fill] (-3,-2,1) circle (1pt); 	
\draw[fill] (-3,1,-2) circle (1pt); 	
\draw[fill] (-3,2,-1) circle (1pt); 	

\draw(1b3b2)--(1b23b);
\draw[dotted] (1b3b2)--(1b2b3)--(1b32b)--(1b23b);

\coordinate (312) at (1,3,2);
\coordinate (3b12) at (-1,3,2);
\coordinate (2b13) at (-2,3,1);
\coordinate (2b13b) at (-2,3,-1);
\coordinate (3b12b) at (-1,3,-2);
\coordinate (312b) at (1,3,-2);
\coordinate (213b) at (2,3,-1);
\coordinate (213) at (2,3,1);

\draw[fill] (1,3,2) circle (1.2pt);	
\draw[fill] (-2,3,-1) circle (1.2pt); 
\draw[fill] (-1,3,-2) circle (1.2pt);	
\draw[fill] (2,3,1) circle (1.2pt); 	

\draw (213)--(312)--(2b13b)--(3b12b)--cycle;

\coordinate (31b2) at (1,-3,2);
\coordinate (3b1b2) at (-1,-3,2);
\coordinate (2b1b3) at (-2,-3,1);
\coordinate (2b1b3b) at (-2,-3,-1);
\coordinate (3b1b2b) at (-1,-3,-2);
\coordinate (31b2b) at (1,-3,-2);
\coordinate (21b3b) at (2,-3,-1);
\coordinate (21b3) at (2,-3,1);

\draw[fill] (-1,-3,2) circle (1pt); 	
\draw[fill] (-2,-3,1) circle (1pt); 	
\draw[fill] (1,-3,-2) circle (1pt); 	
\draw[fill] (2,-3,-1) circle (1pt); 	

\draw(21b3b)--(3b1b2);
\draw[dotted] (21b3b)--(31b2b)--(2b1b3)--(3b1b2);

\coordinate (321) at (1,2,3);
\coordinate (32b1) at (-1,-2,3);
\coordinate (3b21) at (-1,2,3);
\coordinate (3b2b1) at (-1,-2,3);
\coordinate (231) at (2,1,3);
\coordinate (23b1) at (2,-1,3);
\coordinate (2b31) at (-2,1,3);
\coordinate (2b3b1) at (-2,-1,3);

\draw[fill] (1,2,3) circle (1.2pt);	
\draw[fill] (-1,-2,3) circle (1.2pt); 
\draw[fill] (2,1,3) circle (1.2pt); 	
\draw[fill] (-2,-1,3) circle (1.2pt);	
\draw (231)--(321)--(2b3b1)--(3b2b1)--cycle;

\coordinate (321b) at (1,2,-3);
\coordinate (32b1b) at (1,-2,-3);
\coordinate (3b21b) at (-1,2,-3);
\coordinate (3b2b1b) at (-1,-2,-3);
\coordinate (231b) at (2,1,-3);
\coordinate (23b1b) at (2,-1,-3);
\coordinate (2b31b) at (-2,1,-3);
\coordinate (2b3b1b) at (-2,-1,-3);

\draw[fill] (1,-2,-3) circle (1pt); 	
\draw[fill] (-1,2,-3) circle (1pt); 	
\draw[fill] (2,-1,-3) circle (1pt); 	
\draw[fill] (-2,1,-3) circle (1pt); 	

\draw (23b1b)--(3b21b);
\draw[dotted] (23b1b)--(32b1b)--(2b31b)--(3b21b);

\draw (1,2,3)--(2,1,3)--(3,1,2)--(3,2,1)--(2,3,1)--(1,3,2)--cycle;

\draw[dotted] (-1,-2,3)--(-2,-1,3)--(-3,-1,2)--(-3,-2,1)--(-2,-3,1)--(-1,-3,2)--cycle;

\draw (21b3b)--(12b3b)--(132)--(231)--(3b2b1)--(3b1b2)--cycle;
\draw (321)--(312)--(2b13b)--(1b23b)--(1b3b2)--(2b3b1)--cycle;

\draw (3b21b)--(3b12b);
\draw[dotted] (2b31b)--(1b32b);
\draw (13b2b)--(23b1b);
\draw[dotted] (31b2b)--(32b1b);

\draw[fill=yellow, opacity=0.5] ($1/2*(123)+1/2*(321)$)--($1/2*(123)+1/2*(213)$)--($1/3*(123)+1/3*(3b12b)+1/3*(23b1b)$)--($1/2*(123)+1/2*(13b2b)$)--($1/2*(12b3b)+1/2*(123)$)--($1/2*(123)+1/2*(132)$)--cycle;
\draw[thick, blue] ($1/2*(123)+1/2*(321)$)--($1/2*(123)+1/2*(213)$)--($1/3*(123)+1/3*(3b12b)+1/3*(23b1b)$)--($1/2*(123)+1/2*(13b2b)$)--($1/2*(12b3b)+1/2*(123)$)--($1/2*(123)+1/2*(132)$)--cycle;

\draw[dotted, blue, thick] ($1/2*(123)+1/2*(321)$)--($1/3*(2b1b3)+1/3*(32b1b)+1/3*(1b32b)$);
\draw[dotted, blue, thick] ($1/3*(2b1b3)+1/3*(32b1b)+1/3*(1b32b)$)--($1/2*(12b3b)+1/2*(123)$);
\draw[dotted, blue, thick] ($1/3*(2b1b3)+1/3*(32b1b)+1/3*(1b32b)$)--($1/3*(123)+1/3*(3b12b)+1/3*(23b1b)$);  

\draw[thick, blue] (123)--($1/2*(123)+1/2*(132)$);
\draw[thick, blue] (123)--($1/2*(123)+1/2*(213)$);
\draw[thick, blue] (123)--($1/2*(123)+1/2*(13b2b)$);

\node at (0,0,-5) {$P_{D_3}(\{1,2,3\})$};
\end{scope}

\end{tikzpicture}
\caption{Examples of partitioned weight polytopes in type $D_3$.}
\label{fig_part_weight_poly_D}
\end{figure}
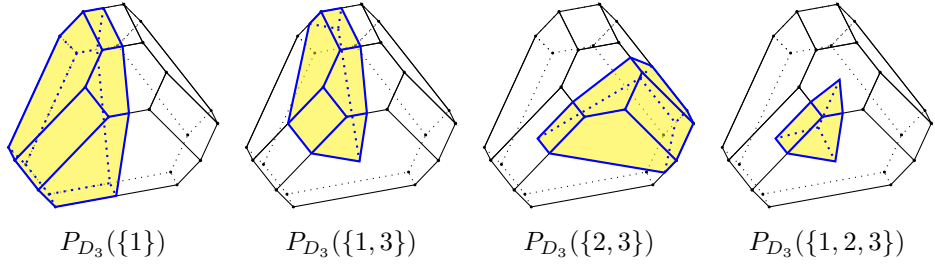

The next  proposition is the type $D_n$ analogue of Proposition \ref{prop_A_position_F(I)}.  

\begin{proposition}\label{prop_D_position_F(I)}
For each $k\in [n]$ and $I\in \mathfrak{M}_{D_n}$, the following claims hold. 
\begin{enumerate}
\item \label{claim_D_1} The facet $F(\indI)$ intersects $\Hk$ nontrivially if and only if $\indI$ is $s_k$-invariant. In this case, $F(\indI)$ is bisected by the hyperplane $\Hk$. 
\item \label{claim_D_2} The facet $F(\indI)$ is contained in the half-space $\Hk^<$ if and only if $\indI$ satisfies
\begin{enumerate}
\item $k\in I$ but  $k+1\notin I$ or  $\overline{k+1}\in I$ but $\bar{k} \notin I$, for $1\leq k < n$;
\item $n\in I$ but $\overline{n-1} \notin I$ or $n-1\in I$ but $\bar n \notin I$, for $k=n$. 
\end{enumerate}
\end{enumerate}
\end{proposition}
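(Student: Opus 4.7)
My plan is to adapt the proofs of Propositions \ref{prop_A_position_F(I)} (type $A$) and \ref{prop_B_position_F(I)} (type $B$) to the type $D_n$ setting. The main new features in type $D_n$ are (i) the simple reflection $s_n=(n-1,\bar n)(\overline{n-1},n)$ acts by swapping two pairs of indices rather than by a single transposition or sign change, and (ii) facets $F(I)$ with $|I|=n$ have a more delicate intersection pattern, already encoded in Lemma \ref{lem_D_facets_intersecting}.

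For statement \eqref{claim_D_1}, I would run the symmetry argument of the type $A$ proof verbatim. Since $P_{D_n}$ is $W_{D_n}$-invariant, the reflection through $H(k)$ either preserves $F(I)$---in which case $F(I)$ is necessarily bisected by $H(k)$---or sends it to the distinct facet $F(s_k(I))$. In the latter case one must verify that $F(I)\cap F(s_k(I))=\emptyset$, which follows from Lemma \ref{lem_D_facets_intersecting} once one checks that $I$ and $s_k(I)$ are incomparable and, when $|I|=n$, satisfy $|I\cap s_k(I)|\le n-2$. This is a routine case-check: for $1\le k<n$, if $I$ is not $s_k$-invariant then some pair among $k,k+1,\bar k,\overline{k+1}$ lies in the symmetric difference $I\triangle s_k(I)$ and incomparability follows; for $k=n$, the swap $\{n-1,\bar n\}\leftrightarrow\{n,\overline{n-1}\}$ on a non-$s_n$-invariant $I$ again forces $|I\triangle s_n(I)|\ge 2$, which suffices.

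For statement \eqref{claim_D_2}, I would analyze vertex coordinates directly, as in types $A$ and $B$. For $1\le k<n$ the hyperplane $H(k)$ is $\{x_k=x_{k+1}\}$, and comparing the coordinates of vertices in $F(I)$ (which, by the defining equation of $F(I)$, have a prescribed multiset of values in the coordinates indexed by $I$) gives the stated condition exactly as in Proposition \ref{prop_A_position_F(I)}(2), using the relation $x_i+x_{\bar i}=0$ to handle the barred indices. For $k=n$, the coroot $\alpha_n^\vee$ equals $t_{n-1}+t_n$, so $H(n)=\{x_{n-1}+x_n=0\}$ and $H(n)^<=\{x_{n-1}+x_n<0\}$. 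At a vertex $v_u=(a_{u(1)},\dots,a_{u(\bar 1)})$ of $F(I)$, the condition $x_{n-1}+x_n<0$ becomes $a_{u(n-1)}+a_{u(n)}<0$; since $a_j<0$ iff $j\in[n]$ and $a_{\bar j}=-a_j$, a short sign analysis reduces this, via the description of $F(I)$ in terms of the sum $\sum_{i\in I}a_{u(i)}=\sum_{i\in\mathcal I}a_i$, to the dichotomy stated in (2)(b).

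The main obstacle will be the case analysis for $k=n$ in both parts. Since facets with $|I|=n$ are defined using an index set $\mathcal I$ whose description depends on the parity of $|I\cap\overline{[n]}|$, and since $s_n$ can interact with $I$ through its intersection with the four-element set $\{n-1,n,\bar n,\overline{n-1}\}$ in several different configurations, one must enumerate these possibilities carefully to confirm both the disjointness claim in \eqref{claim_D_1} and the coordinate-sign claim in \eqref{claim_D_2}.
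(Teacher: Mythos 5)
Your proposal is correct and follows essentially the same route the paper intends: Proposition \ref{prop_D_position_F(I)} is stated there without proof as ``the type $D_n$ analogue of Proposition \ref{prop_A_position_F(I)}'', to be obtained exactly as you describe, by the reflection-symmetry argument for part (1) (with disjointness of $F(I)$ and $F(s_k(I))$ supplied by Lemma \ref{lem_D_facets_intersecting}) and by the vertex-coordinate analysis using $x_i+x_{\bar i}=0$ for part (2). One small correction to your sketch of part (1): when $|I|=n$ the bound $|I\triangle s_n(I)|\ge 2$ does not by itself suffice, since it only gives $|I\cap s_n(I)|\le n-1$ and Lemma \ref{lem_D_facets_intersecting}(2) permits $F(I)\cap F(J)\neq\emptyset$ when $|I\cap J|=n-1$; what your case check actually yields, and what is needed, is $|I\cap s_n(I)|= n-2$, because for a signed $I$ of size $n$ that is not $s_n$-invariant the reflection $s_n$ replaces the pair $\{n-1,n\}$ by $\{\overline{n-1},\bar n\}$ or vice versa, so $s_n(I)$ is never of the excluded form $(I\setminus\{j\})\cup\{\bar j\}$.
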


Motivated by this proposition, we make the following definition. 

\begin{definition}\label{def_D_K-lower}
For a subset $K\subset [n]$, we define $\M_{D_n}(K)$ to be the set of elements $I\in \M_{D_n}$ such that $I\cap N$ is a lower subset of $N$ with respect to the partial order  
\begin{equation*}
\begin{tikzcd}[column sep=-0.5ex, row sep=-2.0ex]
&&&&&&&&n&&&&&&&&\\
&&&&&&& \reflectbox{\rotatebox[origin=c]{150}{$\prec$}}
 &&  \reflectbox{\rotatebox[origin=c]{210}{$\prec$}}  &&&&&&&\\
\ \ \ 1&\prec&\cdots&&&\prec&n-1&&& &\overline{n-1}&\prec&&& \cdots& \prec &\bar 1.\\
&&&&&&&  \reflectbox{\rotatebox[origin=c]{30}{$\succ$}}  &&  \reflectbox{\rotatebox[origin=c]{330}{$\succ$}}  &&&&&&&\\
&&&&&&&&\bar n&&&&&&&&
\end{tikzcd} 
\end{equation*}
for every disjoint component $N$ appearing in the $W_K$-orbit decomposition \eqref{eq:D_orbit_decomposition}. 
\end{definition}

\begin{example}
For the cases in Example \ref{ex_D_3_orbit_decomp}, we have  
\begin{enumerate}
\item $\M_{D_3}(\{1\})=\left\{ \{1\}, \{\bar 2\}, \{3\}, \{\bar 3\}, \{1,2,3\}, \{1,2,\bar3\}, \{3,\bar2, \bar1\}, \{\bar3, \bar2, \bar1\}\right\}.$
\item $\M_{D_3}(\{1,2\})=\{\{1\}, \{\bar3\}, \{1,2,3\}, \{1,2,\bar3\}, \{\bar3, \bar2, \bar1\}, \{1, \bar3, \bar2\} \}. $
\item $\M_{D_3}(\{1,3\})=\{\{1\}, \{3\}, \{1,2,\bar3\}, \{1,2,3\}, \{3, \bar2, \bar1\}, \{1, 3, \bar2\} \}. $
\item $\M_{D_3}(\{2,3\})=\{ \{1\}, \{\bar 1\}, \{2\}, \{1,2,3\}, \{1,2,\bar3\}, \{2,3, \bar1\}, \{2, \bar 3, \bar 1\} \}$.
\end{enumerate}
\end{example}

With the definition of $\M_{D_n}(K)$ above, Corollary~\ref{cor_A_facet_characterization_of_Part_perm} and  Proposition~\ref{prop_A_flagness_part_w_poly} hold for type $D_n$. Proposition~\ref{prop_A_intersection_facets_PK} also holds for type $D_n$ if we add the condition $|I\cap J|\le n-2$ to the condition $I\not\subset J$ and $J\not\subset I$ in (1) of Proposition~\ref{prop_A_intersection_facets_PK}.  Similarly, Proposition~\ref{prop_A_cohomologyXK} also holds for type $D_n$ with 
\[
\alpha^{\v}_k =\begin{cases}
e_k-e_{k+1} & 1\leq k < n,\\
e_{n-1}+e_n & k=n
\end{cases}
\]
if we add the condition $|I\cap J|\le n-2$ to the condition $I\not\subset J$ and $J\not\subset I$ in (2) of Proposition~\ref{prop_A_cohomologyXK}.

\subsection{Proof of Theorem \ref{main} for type $D$}\label{subsec_D_proof_of_main_thm}

In analogy with our argument in type $A_{n-1}$,  we can write
\begin{align*} \label{eq_D_coefficientsbeta}
e_\indI - e_{v(\indI)}=\sum_{k \in K} c^{\indI,v}_k \alpha^{\v}_k 
\end{align*}
with each $c_k^{I,v}$ a non-negative integer, for arbitrary $\indI \in \M_{D_n}(K)$ and $v \in W_K$. 
The following lemma is the type $D_n$ analogue of Lemma~\ref{lem_A_coeff_nonzero}. 
We maintain the notation $\widehat{[k]}$ from \eqref{notation_khat}. 

\begin{lemma} \label{lemma:Lemma4.4TypeD}
Let $I\in \M_{D_n}(K)$, $v\in W_K$ and consider the $W_K$-decomposition \eqref{eq:D_orbit_decomposition}.  Then $c_k^{I,v}=0$ $(k\in K)$ in the following cases: 
\begin{enumerate}
\item When $k\in N_i$ ($k\not=n$ by definition of $N_i$ since $k\in K$),   
\[
\begin{split}
&v(\indI) \cap N_i \subset [k] \cap N_i \ {\rm or} \ v(\indI) \cap N_i \supset [k] \cap N_i, \text{ and}\\ 
&v(\indI) \cap \overline{N_i} \subset \widehat{[k+1]}  \cap \overline{N_i} \ {\rm or} \ v(\indI) \cap \overline{N_i} \supset \widehat{[k+1]} \cap \overline{N_i} .
\end{split}
\]
\item When $k\in N'$ ($k\not=n$ by definition of $N'$), 
\begin{align*} 
&v(\indI) \cap N' \subset [k] \cap N' \ {\rm or} \ v(\indI) \cap N' \supset [k] \cap N', \text{ and} \\
&\begin{array}{ll}
v(\indI) \cap \overline{N'} \subset \big((\widehat{[k+1]} \setminus \{\bar{n} \}) \cup \{n \}\big) \cap \overline{N'} \ {\rm or} \\
v(\indI) \cap \overline{N'} \supset 
\big((\widehat{[k+1]} \setminus \{\bar{n} \}) \cup \{n \})\big) \cap \overline{N'}.
\end{array} 
\end{align*}
\item When $k\in \overline{N'}$ ($k=n$ by definition of $\overline{N'}$), 
\begin{align*} 
&v(\indI) \cap N' \subset [n-1] \cap N' \ {\rm or} \ v(\indI) \cap N' \supset [n-1] \cap N',\text{ and} \\
&v(\indI) \cap \overline{N'} \subset \{n\} \cap \overline{N'} \ {\rm or} \ v(\indI) \cap \overline{N'} \supset \{n\} \cap \overline{N'}. 
\end{align*}
\item  When $k \in N''$, 
\begin{enumerate}
\item[(i)] if $k \leq n-2$, 
\begin{equation*}
v(\indI) \cap N'' \subset [k] \cap N'' \ {\rm or} \ v(\indI) \cap N'' \supset [k] \cap N'',
\end{equation*}
\item[(ii)] if $k=n-1$, 
\begin{align*}
\begin{split}
&v(\indI) \cap N'' \subset ([n-1] \cup \{\bar{n} \}) \cap N'' \ {\rm or} \ v(\indI) \cap N'' \supset [n-1] \cap N'' \ {\rm or} \ \\
&v(\indI) \cap N'' = \big(([n-1] \setminus \{i \}) \cup \{\bar{i} \}\cup \{\bar{n} \}\big) \cap N'' \ {\rm for \ some} \ i\in [n-1]\cap N'',
\end{split}
\end{align*}
\item[(iii)] if $k=n$, 
\begin{align*}  
\begin{split}
&v(\indI) \cap N'' \subset [n] \cap N'' \ {\rm or} \ v(\indI) \cap N'' \supset [n-1] \cap N'' \ {\rm or} \ \\ 
&v(\indI) \cap N'' = \big(([n] \setminus \{i \}) \cup \{\bar{i} \}\big) \cap N'' \ {\rm for \ some } \ i\in [n-1]\cap N''.
\end{split}
\end{align*}
\end{enumerate}
\end{enumerate}
\end{lemma}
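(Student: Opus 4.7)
The plan is to generalize the argument used for Lemma~\ref{lem_A_coeff_nonzero} (type $A$) and Lemma~\ref{lemm:B_coeff_zero} (type $B$).  The first step is to observe that, since every $v\in W_K$ preserves each block appearing in the decomposition \eqref{eq:D_orbit_decomposition}, the difference $e_\indI-e_{v(\indI)}$ splits as
\[
e_\indI-e_{v(\indI)}=\sum_{i=1}^m\bigl(e_{\indI\cap N_i}-e_{v(\indI)\cap N_i}\bigr)
+\sum_{i=1}^m\bigl(e_{\indI\cap \overline{N_i}}-e_{v(\indI)\cap \overline{N_i}}\bigr)
+\bigl(e_{\indI\cap N'}-e_{v(\indI)\cap N'}\bigr)+\bigl(e_{\indI\cap \overline{N'}}-e_{v(\indI)\cap \overline{N'}}\bigr)+\bigl(e_{\indI\cap N''}-e_{v(\indI)\cap N''}\bigr).
\]
Since $|v(L)\cap B|=|L\cap B|$ for each block $B$, and $\indI\in\M_{D_n}(K)$ by hypothesis, the partial sum corresponding to each block is already expressible as an integer combination of the coroots $\alpha_k^{\v}$ with $k\in K$ whose support lies in that block.

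Next I would localize each $\alpha_k^{\v}$ to a single block of the decomposition.  For $k\in K\setminus\{n\}$ with $k\in N_i$, the coroot $\alpha_k^{\v}=e_k-e_{k+1}$ has its ``positive part'' $e_k\in \mathrm{span}_{\ZZ}\{e_j:j\in N_i\}$ and its ``negative part'' $-e_{k+1}$ is equal to $e_{\overline{k+1}}\in \mathrm{span}_{\ZZ}\{e_j:j\in \overline{N_i}\}$.  Hence $\alpha_k^{\v}$ appears only in the $N_i$-part and $\overline{N_i}$-part of the sum above.  Similarly, for $k\in K\setminus\{n\}$ contained in $N'$ or $N''$ the corresponding $\alpha_k^{\v}$ is supported in $N'\sqcup\overline{N'}$ or in $N''$ respectively.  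Finally, the long coroot $\alpha_n^{\v}=e_{n-1}+e_n$ sits in $N'\sqcup\overline{N'}$ when $n\in K$, $n-1\notin K$, and lies in $N''$ when $\{n-1,n\}\subset K$.

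Given this localization, the proof of cases (1)--(3) runs in parallel with type $B$: in each case the stated condition on $v(\indI)$ forces $e_{\indI\cap B}-e_{v(\indI)\cap B}$ (with $B$ the relevant block) to be an integer combination of the coroots $\alpha_j^{\v}$ with $j\in K\setminus\{k\}$, because together with $\indI\in\M_{D_n}(K)$ the sets $\indI\cap B$ and $v(\indI)\cap B$ either both sit below $k$ (with the appropriate orientation in $\overline{N_i}$ given by $\widehat{[k+1]}$), or both contain the initial segment up to $k$ in the relevant partial order from Definition~\ref{def_D_K-lower}.  The coroot $\alpha_k^{\v}$ is then never required in the expansion, which forces $c_k^{\indI,v}=0$.

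The main obstacle is case (4), since here the block $N''$ is closed under $s_n=(n-1,\bar n)(\overline{n-1},n)$, so $W_K$ acts on $N''$ by mixing barred and unbarred letters.  The partial order on $N''$ branches at $n-1$ into incomparable elements $n,\bar n$, and for $k\in\{n-1,n\}$ the ``lower sets'' are no longer linearly ordered.  This is the reason that subcases (ii) and (iii) admit an extra possibility of the form $([n]\setminus\{i\})\cup\{\bar i\}$ or its analogue with $\bar n$ adjoined.  Here I would argue directly: one checks that on $N''$ the coroots $\alpha_{n-1}^{\v}=e_{n-1}-e_n$ and $\alpha_n^{\v}=e_{n-1}+e_n$ span the lattice $\ZZ e_{n-1}\oplus\ZZ e_n$ modulo the contributions from $\alpha_j^{\v}$ with $j\le n-2$, and then one verifies for each of the three listed shapes of $v(\indI)\cap N''$ that the difference $e_{\indI\cap N''}-e_{v(\indI)\cap N''}$ can be expanded using only the remaining coroots $\{\alpha_j^{\v}:j\in K\setminus\{k\}\}$, by writing it as a signed sum of $\alpha_j^{\v}$ following the chain from $\indI\cap N''$ to $v(\indI)\cap N''$ through lower sets of $N''$ that avoid the forbidden coroot.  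Once all four cases are disposed of, the statement follows, and with this lemma in hand the remainder of the type $A$ argument from Section~\ref{sec_A_proof_of_main_thm} transfers verbatim to type $D_n$.
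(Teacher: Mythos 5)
Your overall architecture---splitting $e_\indI-e_{v(\indI)}$ along the blocks of \eqref{eq:D_orbit_decomposition}, localizing each $\alpha_k^{\v}$ to the block(s) containing $k$, and then comparing $\indI\cap B$ and $v(\indI)\cap B$ with the relevant initial segment as in types $A$ and $B$---is exactly what the paper intends (it prints no proof, only the assertion that the type $A$ argument adapts), and your treatment of cases (1)--(3) is consistent with that. The genuine problem is in case (4), precisely the case you flag as the main obstacle. Your plan there is to ``verify for each of the three listed shapes of $v(\indI)\cap N''$'' that the difference expands without the forbidden coroot; taken literally, this verification fails. Take $K=[n]$, so $N''=[n]\sqcup\overline{[n]}$, and $\indI=[n]\in\M_{D_n}(K)$. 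The set $J=[n-1]\cup\{\bar n\}$ satisfies $J\supset[n-1]\cap N''$, i.e.\ the second listed shape in (4)(iii) (and in (4)(ii)), yet
\[
e_{[n]}-e_{J}=2e_n=\alpha_n^{\v}-\alpha_{n-1}^{\v},
\]
so if $v(\indI)=J$ were attainable one would get $c_n^{\indI,v}=1$ and $c_{n-1}^{\indI,v}=-1$, contradicting both the lemma and the asserted non-negativity of the coefficients. The lemma survives only because no $v\in W_{D_n}$ sends $[n]$ to $[n-1]\cup\{\bar n\}$: the even-signed condition forces $v$ to preserve the parity of the number of barred entries of a full signed set, so these two sets lie in different $W_{D_n}$-orbits. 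The same phenomenon occurs with the third listed shape $\big(([n]\setminus\{i\})\cup\{\bar i\}\big)\cap N''$, which gives $c_n^{\indI,v}=0$ when $\indI\cap N''$ carries an odd number of bars but would give $c_n^{\indI,v}=1$ for $\indI=[n]$ (since $e_{[n]}-e_{([n]\setminus\{i\})\cup\{\bar i\}}=2e_i$ has $\alpha_n^{\v}$-coefficient $1$)---again a combination that must be excluded by the orbit structure rather than by the expansion itself.

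So the missing ingredient in your case (4) is an analysis of which sets $v(\indI)\cap N''$ are actually attainable from the given lower set $\indI\cap N''$ under the (even-signed) action of $W_K$ on $N''$; only after restricting to attainable shapes does the coroot-expansion check go through, and the generators $s_k$ restrict to even-signed permutations of $N''$, so the parity invariant does localize to that block. Your proposed ``chain through lower sets of $N''$ avoiding the forbidden coroot'' cannot supply this, because in the bad examples above a chain of lower sets between the two shapes exists ($[n]\to[n-1]\to[n-1]\cup\{\bar n\}$) but every step contributes $e_n=\tfrac12(\alpha_n^{\v}-\alpha_{n-1}^{\v})$, so the forbidden coroot is unavoidable; the resolution is that the endpoint is simply not in the orbit. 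Cases (1)--(3) are fine as sketched, since there the blocks are permuted in an essentially type $A$ fashion and no parity obstruction enters.
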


With these modifications, the argument developed for type $A_{n-1}$ after Lemma~\ref{lem_A_coeff_nonzero} works for type $D_n$.

\end{document}